\newtheorem{theorem}{Theorem}[section]
\newtheorem{lemma}[theorem]{Lemma}
\newtheorem{proposition}[theorem]{Proposition}
\newtheorem{corollary}[theorem]{Corollary}
\theoremstyle{definition}
\theoremstyle{notation}
\newtheorem{definition}[theorem]{Definition}
\newtheorem{notation}[theorem]{Notation}
\newtheorem{question}[theorem]{Question}
\newtheorem{example}[theorem]{Example}
\newtheorem{problem}[theorem]{Problem}
\theoremstyle{remark}
\newtheorem{remark}[theorem]{Remark}
\numberwithin{equation}{section}
\begin{document}
\setcounter{page}{1}

\title[Some Problems in Functional Analysis]{Some Problems in Functional Analysis Inspired by Hahn Banach Type Theorems}

\author[M.A. Sofi]{M.A. Sofi$^1$ $^{*}$}

\address{$^{1}$ Department of Mathematics, University of Kashmir, Hazratbal Srinagar - 190 006, India.}
\email{\textcolor[rgb]{0.00,0.00,0.84}{aminsofi@rediffmail.com}}

\subjclass[2010]{Primary 46B20; Secondary 47B10, 46G10.}

\keywords{Vector Measures, Nuclear operator, Hilbert-Schmidt operator, 2-summing map, Banach space.}

\date{Received: xxxxxx; Revised: yyyyyy; Accepted: zzzzzz.
\newline \indent $^{*}$ Corresponding author}
 \maketitle
\begin{center}
Dedicated to Professor T. Ando with affection.
\end{center}
\begin{abstract}
As a cornerstone of functional analysis, Hahn Banach theorem constitutes an indispensable tool of modern analysis where its impact extends beyond the frontiers of linear functional analysis into several other domains of mathematics, including complex analysis, partial differential equations and ergodic theory besides many more. The paper is an attempt to draw attention to certain applications of the Hahn Banach theorem which are less familiar to the mathematical community, apart from highlighting certain aspects of the Hahn Banach phenomena which have spurred intense research activity over the past few years, especially involving operator analogues and nonlinear variants of this theorem.

For a discussion of a whole lot of issues related to the Hahn Banach theorem not treated in this paper, the best source is a famous survey paper by Narici and Beckenstein \cite{nb} which deals, among other things, with the different settings witnessing the validity of the Hahn Banach theorem.
\end{abstract}

\textbf{Contents}
\begin{enumerate}
\item[\ref{sa}.] What is known to be folklore.\\
\item[\ref{sb}.] Unconventional applications of Hahn Banach theorem.\\
\item[\ref{sc}.] Some less known aspects of Hahn-Banach Theorem.\\
\item[\ref{sd}.] Hahn Banach-extension property as a Finite Dimensional property.\\
\item[\ref{se}.] Hilbert spaces determined via Hahn Banach phenomena.\\
\item[\ref{sf}.] Intersection of balls and extendibility of maps.\\
\end{enumerate}
\section{What is known to be folklore}\label{sa}
\begin{definition}\label{d1}
\textnormal{ Hahn Banach Theorem (HBT) (Classical, Real Case)\\
Given a real normed linear space $X$, a subspace $Y$ of $X$, a continuous linear functional $g$ on $Y$, there exists a continuous linear functional $f$ on $X$ such that $\| f \| =\| g \|$.}
\end{definition}
\begin{definition}\label{d2}
\textnormal{ HBT(Complex case)\\
Same as above with $X$ being a linear space over $\mathbb{C}$ and $g$ being complex linear. Then $F$ can also be chosen to be complex linear.}
\end{definition}	
\begin{corollary}\label{c1}
For $\mathbb{R}^{n}$- valued continuous linear maps $g$ on $Y$, there always exist continuous linear maps $f$ on $X$ extending $g$  $($not necessarily with preservation of norms$)$.
\end{corollary}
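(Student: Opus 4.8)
The plan is to deduce the corollary from the scalar Hahn Banach theorem (Definition \ref{d1}) by treating $g$ one coordinate at a time. First I would fix the coordinate projections $\pi_i \colon \mathbb{R}^n \to \mathbb{R}$, $1 \le i \le n$, and put $g_i = \pi_i \circ g$. Each $\pi_i$ is a bounded linear functional on $\mathbb{R}^n$, so each $g_i$ is a continuous linear functional on $Y$, with $\|g_i\| \le \|g\|$. Applying Definition \ref{d1} separately to each $g_i$ produces continuous linear functionals $f_i$ on $X$ with $f_i|_Y = g_i$ and $\|f_i\| = \|g_i\|$.

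Next I would reassemble these into the map $f \colon X \to \mathbb{R}^n$ defined by $f(x) = \big(f_1(x),\dots,f_n(x)\big)$. Linearity of $f$ is immediate from linearity of the $f_i$. Continuity follows from the estimate $\|f(x)\| \le \big(\sum_{i=1}^{n}\|f_i\|^2\big)^{1/2}\|x\|$ when $\mathbb{R}^n$ carries the Euclidean norm (and from the evident analogue for any other norm on $\mathbb{R}^n$, all of which are equivalent). Finally, for $y \in Y$ one has $f(y) = \big(g_1(y),\dots,g_n(y)\big) = g(y)$, so $f$ extends $g$, which is precisely the assertion.

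Regarding the parenthetical caveat, I would observe that the construction above only yields $\|f\| \le \big(\sum_i\|g_i\|^2\big)^{1/2}$, a bound that in general strictly exceeds $\|g\|$. One genuinely cannot expect equality: if every $\mathbb{R}^n$-valued map on every subspace of every normed space admitted a norm-preserving extension, then $\mathbb{R}^n$ with the given norm would be $1$-injective, which fails for the Euclidean norm as soon as $n \ge 2$. (By contrast, had $\mathbb{R}^n$ been given the supremum norm, the coordinatewise map $f$ above \emph{does} satisfy $\|f\| = \max_i\|f_i\| = \max_i\|g_i\| = \|g\|$, reflecting the $1$-injectivity of $\ell_\infty^n$; this is exactly why the statement is hedged with ``not necessarily''.) I do not anticipate any real obstacle in the argument: the entire content is the bookkeeping that a finite product of scalar extensions is a vector extension, and the only point deserving emphasis is the loss of norm control that the corollary already flags.
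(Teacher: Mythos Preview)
Your proposal is correct and is precisely the approach the paper has in mind: apply the scalar Hahn--Banach theorem to each coordinate functional $g_i = \pi_i \circ g$ and reassemble (the paper leaves this corollary without proof, but the componentwise argument is made explicit in the analogous infinite-dimensional setting immediately after Theorem~\ref{t2}). Your discussion of the norm caveat also matches the content of Remark~\ref{r1} and Proposition~\ref{p1}.
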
	
\begin{remark}\label{r1}
For the above extension to be norm-preserving, one has to take $\ell_{n}^{\infty} = (\mathbb{R}^{n},\| \|_{\infty})$ as the range space. Interestingly, converse is also true!\\
(See the discussion in Remark \ref{r5} of Section \ref{sc}).
\end{remark}
\begin{proposition}\label{p1}
Given a norm $\| \|$ on $ \mathbb{R}^{n}$, then $(\mathbb{R}^{n}, \| \|)$ has norm-preserving extension property if and only if $(\mathbb{R}^{n},\| \|)=\ell_{n}^{\infty}$, isometrically.
\end{proposition}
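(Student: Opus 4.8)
The plan is to translate the norm‑preserving extension property into a statement about $1$‑complemented subspaces and then to read off the rigid geometry this forces on the unit ball. The ``if'' direction is immediate: assuming $\|\cdot\| = \|\cdot\|_\infty$, let $g \colon Y \to \ell_n^\infty$ be bounded with $Y$ a subspace of a normed space $X$; writing $g = (g_1, \dots, g_n)$ with $g_i \in Y^*$, we have $|g_i(y)| \le \|g(y)\|_\infty \le \|g\|\,\|y\|$, so $\|g_i\| \le \|g\|$. By the Hahn--Banach theorem (Definition~\ref{d1}), extend each $g_i$ to $f_i \in X^*$ with $\|f_i\| = \|g_i\|$ and put $f = (f_1, \dots, f_n) \colon X \to \ell_n^\infty$; then $f|_Y = g$ and $\|f\| = \sup_{\|x\| \le 1} \max_i |f_i(x)| \le \|g\|$, whence $\|f\| = \|g\|$. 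Thus $\ell_n^\infty$ has the norm‑preserving extension property.

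For the converse, suppose $Z = (\mathbb{R}^n, \|\cdot\|)$ has the norm‑preserving extension property. Applying the property to $\mathrm{id}_Z$ shows that whenever $Z$ sits isometrically inside a normed space $W$ there is a norm‑one projection $P \colon W \to Z$ (the reverse implication also holds, by factoring any $g \colon Y \to Z$ through the superspace $E = \ell_\infty(B_{Z^*}) \supseteq Z$ --- which is $1$‑injective by coordinatewise Hahn--Banach --- and then composing with $P$; only the stated direction is needed below). From this I would derive Nachbin's binary intersection property: every family $\{\overline{B}(x_i, r_i)\}_{i \in I}$ of pairwise‑intersecting closed balls of $Z$ has nonempty intersection. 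By Helly's theorem one may take the family finite. Adjoin a formal vector $w_0$ to $Z$ and norm $W = Z \oplus \mathbb{R}w_0$ so that it restricts to $\|\cdot\|$ on $Z$ while $\|w_0 - x_i\| \le r_i$ for every $i$ --- one can use the Minkowski functional of the convex hull of $B_Z$ together with the points $\pm r_i^{-1}(w_0 - x_i)$, the pairwise inequalities $\|x_i - x_j\| \le r_i + r_j$ being exactly what forces this convex body to meet $Z$ in precisely $B_Z$. A norm‑one projection $P \colon W \to Z$ then gives $\|Pw_0 - x_i\| = \|P(w_0 - x_i)\| \le r_i$ for all $i$, so $Pw_0 \in \bigcap_i \overline{B}(x_i, r_i)$.

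The remaining, and decisive, step is to show that a norm on $\mathbb{R}^n$ whose closed balls enjoy the binary intersection property must have a parallelepiped as its unit ball --- the reverse implication being the one‑dimensional Helly argument applied coordinatewise in $\ell_n^\infty$ --- so that the polar ball is a cross‑polytope, i.e.\ $Z \cong \ell_n^\infty$ isometrically. This is precisely the finite‑dimensional instance of the Nachbin--Goodner--Kelley theorem, which identifies the $1$‑injective Banach spaces as the spaces $C(K)$ with $K$ extremally disconnected compact Hausdorff --- here $K$ is forced to be finite and discrete --- so that quoting that theorem closes the argument at once. A self‑contained treatment must instead confront the combinatorics of pairwise‑intersecting balls head on: for $n = 2$ one checks that every centrally symmetric plane convex body other than a parallelogram carries a triple of pairwise‑intersecting balls with empty intersection (for the Euclidean disc this is the familiar Reuleaux configuration), and in general one must show that the failure of $B_Z$ to be a parallelepiped can always be witnessed by finitely many pairwise‑intersecting balls with empty common part, forcing $B_{Z^*}$ to be the absolute convex hull of a basis. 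This geometric classification is the step I expect to be the main obstacle; everything before it --- the reduction to $1$‑complementation and thence to the binary intersection property --- is soft.
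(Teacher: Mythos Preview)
Your proposal is correct and follows essentially the same route as the paper: reduce the norm-preserving extension property to the binary intersection property (equivalently, $1$-injectivity) and then invoke the Nachbin--Goodner--Hasumi--Kelley theorem, which in finite dimensions forces $K$ to be finite and hence $C(K)\cong\ell_n^\infty$. The paper's own ``proof'' is in fact just the pointer in Remark~\ref{r5}(a)--(b) to exactly this chain of implications; your write-up spells out the intermediate step (from $1$-complementation in every superspace to BIP, via the one-point extension) that the paper leaves implicit, and correctly flags the geometric classification of finite-dimensional spaces with BIP as the substantive content --- which both you and the paper discharge by citing Nachbin--Goodner--Kelley rather than arguing it directly. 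One small quibble: the reduction to finite families of balls is by compactness of closed balls in $\mathbb{R}^n$, not Helly's theorem (Helly needs $(n{+}1)$-wise intersection as hypothesis, not pairwise).
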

\begin{remark}\label{r2}
(i): HBT also holds for the more general class of locally convex spaces which include normed spaces as a very special case. This is a consequence of the fact that the locally convex topology of such spaces is given by a family of seminorms and that the classical version of HBT also holds in the more general setting of norms being replaced by seminorms.\\
\textit{Remark} 1.6. (ii):  Existence of Hahn-Banach extension is a typically {\it locally convex} phenomenon! In fact, the spaces 
$L_{p}[0,1]$ are known to admit no continuous linear functionals as long as $0<p<1$ (See \cite[Theorem 12.31]{abo}). This is 
a special case of a more general situation: a quasi Banach space $X$ is locally convex if and only if every closed 
subspace of $X$ has the Hahn Banach extension property (HBEP)  i.e., for each  $g\in M^*,\exists \, f \in X^* $  
 such that $f=g$  on $M$ (See \cite[Theorem 4.8]{kn1}). Further, it turns out that the stated equivalence is still valid 
if the (HBEP) is assumed to hold for those closed subspaces M of $X$ for which both M and $\frac{X}{M}$ are Banach 
spaces. On the other hand, there are (incomplete) nonmetrizable non-locally spaces which have the (HBEP) \cite{gs}.
\end{remark}
\begin{notation}\label{n1}
\textnormal{  Throughout this paper, we shall let $X$, $Y$, $Z$ denote Banach spaces, unless
otherwise stated. We shall use the symbol $X^{\ast}$ for the dual of $X$ whereas $B_{X}$ shall be used
for the closed unit ball of $X$:
$$ B_{X}=\{x\in X,\| x \|\leq 1\}.$$
We shall also make use of the following notation:\\
$L(X,Y)$, Banach space of bounded linear operators from $X$ into $Y$.
$$ c_0(X)=\{(x_n)\}\subset X:x_n\rightarrow 0\}$$
$$ \ell_p[X]=\{(x_n)\subset X:\sum_{n=1}^{\infty}\mid\langle x_n,f\rangle\mid^p<\infty,\forall~f\in X^{\ast}\}$$
$$ \ell_p\{X\}=\lbrace(x_n)\subset X:\sum_{n=1}^{\infty}\| x_n\|^p<\infty\rbrace$$
Clearly, $\ell_p\{X\}\subset\ell_p[X],\forall~1\leq p <\infty$. Further, the indicated inclusion is continuous when these spaces are equipped with natural norms defined by:
$$ \epsilon_p((x_n))=\sup_{f\in B_{X^\ast}}{\bigg(\sum_{n=1}^{\infty}\mid\langle x_n,f\rangle\mid^p\bigg)}^{1/p},~(x_n)\in\ell_p[X].$$
$$ \pi_p((x_n))={\bigg(\sum_{n=1}^{\infty}\| x_n\|^p\bigg)}^{1/p},~(x_n)\in\ell_p\{X\}.$$
We shall also have occasion to use p-summing maps in the sequel. 
Thus given $T\in L(X,Y)$, we shall say that $T$ is {\it (absolutely) 
p-summing} $(1\leq p < \infty)$ $\big( T\in \Pi_p(X,Y)$ if $\forall \{x_n\}\in \ell_p[X]$, it follows that 
$\{T(x_n) \} \in \ell_p\{X\}$. By the open maping theorem, this translates into the finitary condition: $\exists 
c > 0$ s.t.\\
$$(*) {\big(\sum_{i=1}^n|| T(x_i)||^p\big)}^{1/p} \leq c.{ \sup_{f\in B_{X^\ast}}\bigg\{ 
\big(\sum_{i=1}^n{|<x_i,f>|}^p\big)}^{1/p}\bigg\}, \,\, \forall{(x_i)}^n_{i=1}\subset X, n\geq 1.$$
The infimum of all such a c's appearing above and denotes by $\pi_p(T)$ is called the p-{\it summing norm} of T, making $\Pi_p(X,Y)$ into the Banach space.}
\end{notation}
\section{Unconventional applications of HB-theorem}\label{sb}
\begin{enumerate}
\item[\ref{s2.1}] Unified approach to fundamental theorems of functional analysis.
\item[\ref{s2.2}] Proof of certain classical theorems of analysis.
(Riemann mapping theorem/Muntz theorem/Existence of Green’s functions).
\item[\ref{s2.4}] Invariant version of HB-theorem.
\item[\ref{s2.3}] Existence of Banach limits.
\item[\ref{s2.5}] Existence of Banach measures on $\mathbb{R}$ and $\mathbb{R}^{2}$.
\item[\ref{s2.6}] Existence of non-measurable sets.
\end{enumerate}

\subsection{Unified approach to fundamental theorems of functional analysis}\label{s2.1}
\hspace*{\fill} \\
In a first course on functional analysis, one learns the four fundamental theorems of
functional analysis: uniform boundedness principle, Hahn Banach theorem, open mapping
theorem (and the closed graph theorem). One also quickly learns that each of these theorems is
approached independently of each other in that whereas HBT is typically proved via Zorn’s
lemma (or a variant there of), the main idea in the proof of the other theorems is provided by a
category argument involving certain families of sets appearing in these proofs. However, it is
possible to provide a unified treatment to all these fundamental theorems via the Hahn Banach
extension theorem, something which is of great pedagogical value.

The main tool in this approach is provided by the following lemma due to W.Roth \cite{rh} which
can be derived from the classical version of the Hahn Banach theorem quoted in the very
beginning of this article.

\begin{lemma}\label{l1}
Let X be a normed space and let $A\subset X$ and $S\subset X^{\ast}$ be such that S is pointwise bounded on $X$ but not uniformly bounded on A. Then there exists $f\in X^{\ast}$ such that $f$ is unbounded on A. In fact, $f$ can be chosen to be the functional:
$$f(x)=\sum_{n=1}^{\infty}\alpha_{n}f_{n},\alpha_{n}\geq 0,\sum_{n=1}^{\infty}\alpha_{n}=1,f_{n}\in S,x\in X.$$
\end{lemma}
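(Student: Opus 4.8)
The plan is to produce $f$ directly in the convex-series form asserted, by a \emph{sliding-hump} construction, with the Hahn--Banach theorem entering to certify that the functional so built genuinely lies in $X^{\ast}$ (and, in one way of organising the argument, to extend it off a subspace).

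First I would distil the hypotheses into a single workable form. Pointwise boundedness of $S$ says that $p(x):=\sup_{g\in S}|g(x)|<\infty$ for every $x\in X$, so $p$ is a finite seminorm on $X$; failure of uniform boundedness on $A$ says precisely that $\sup_{a\in A}p(a)=+\infty$, i.e. $\sup\{|g(a)|:g\in S,\ a\in A\}=+\infty$. Two easy observations then prepare the endgame: any map $\sum_{n}\alpha_{n}g_{n}$ with $g_{n}\in S$, $\alpha_{n}\ge 0$, $\sum_{n}\alpha_{n}=1$ is a well-defined linear functional with $|\sum_{n}\alpha_{n}g_{n}(x)|\le p(x)$ for all $x$ (the defining series converges absolutely, since $\sum_{n}\alpha_{n}|g_{n}(x)|\le p(x)$); and, as soon as $p$ is a \emph{continuous} seminorm --- which is the case whenever $S$ is norm bounded, and in particular in the applications of the lemma (to the uniform boundedness principle one takes $S=\{\widehat{x}:x\in B_{X}\}$) --- such a functional is automatically continuous. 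This is where Hahn--Banach is used: a linear functional majorised by a continuous seminorm is continuous, and if one prefers to build $f$ first only on $\operatorname{span}\{a_{n}:n\ge 1\}$, Hahn--Banach then extends it to all of $X$ keeping the same bound.

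The substance of the proof is the inductive choice of $f_{n}\in S$, $a_{n}\in A$ and weights $\alpha_{n}>0$. Committing in advance to making the $\alpha_{n}$ summable --- and renormalising at the end so that $\sum_{n}\alpha_{n}=1$, which merely rescales $f$ --- I would at stage $n$ first use $\sup\{|g(a)|:g\in S,\ a\in A\}=+\infty$ to pick $a_{n}\in A$ and $f_{n}\in S$ with $|f_{n}(a_{n})|$ so large that, after choosing $\alpha_{n}$ suitably small, the $n$-th ``hump'' $\alpha_{n}|f_{n}(a_{n})|$ exceeds $n+1$ plus the contribution $\sum_{k<n}\alpha_{k}|f_{k}(a_{n})|$ of the finitely many functionals already selected (an already-determined quantity, bounded for instance by $(\sum_{k<n}\alpha_{k}\|f_{k}\|)\,\|a_{n}\|$), and at the same time reserving enough of the still-unassigned mass that every later weight can be taken small enough to force $\sum_{k>n}\alpha_{k}|f_{k}(a_{n})|\le 1$. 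With these choices, for each fixed $m$ the triangle inequality yields
$$|f(a_{m})|\ \ge\ \alpha_{m}|f_{m}(a_{m})|-\sum_{k<m}\alpha_{k}|f_{k}(a_{m})|-\sum_{k>m}\alpha_{k}|f_{k}(a_{m})|\ \ge\ m,$$
so $\sup_{a\in A}|f(a)|=+\infty$, as claimed.

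The step I expect to cost the most effort is the cross-term bookkeeping inside this induction: the already-chosen functionals get evaluated at the \emph{new} point $a_{n}$, about which nothing is known beyond boundedness of each $f_{k}$, while the functionals not yet chosen contribute at $a_{n}$ through weights that are only fixed at later stages. Making this consistent is a matter of ordering the choices correctly --- first pick $a_{n},f_{n}$ so that $|f_{n}(a_{n})|$ dwarfs the accumulated past, then pick $\alpha_{n}$ small, then commit to keeping all subsequent weights small --- and it is precisely this that forces the humps to be taken along points where $|f_{n}(a_{n})|$ grows much faster than the partial sums $\sum_{k<n}\alpha_{k}\|f_{k}\|$. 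Everything else --- passing to a countable subfamily of $S$ and a countable subset of $A$ that already witness the failure of uniform boundedness, the finiteness and convexity of the $\alpha_{n}$, and the domination $|f|\le p$ --- is routine.
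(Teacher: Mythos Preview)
The paper itself does not prove this lemma: it attributes the result to Roth, refers the reader to \cite{rh}, and remarks only that it ``can be derived from the classical version of the Hahn Banach theorem.'' So there is no in-house argument against which to compare your attempt.

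That said, your sliding-hump scheme has a real gap at precisely the step you flag as costing ``the most effort''. You need $\alpha_n|f_n(a_n)|$ to exceed $n+1+\sum_{k<n}\alpha_k|f_k(a_n)|$ with $\alpha_n$ small, but the past term depends on the very $a_n$ you are choosing and can be of the same order as the hump: since $|f_n(a_n)|\le p(a_n)$ while $\sum_{k<n}\alpha_k|f_k(a_n)|$ may reach $\bigl(\sum_{k<n}\alpha_k\bigr)p(a_n)$, the ratio of hump to past is at best $\alpha_n\big/\sum_{k<n}\alpha_k$, which cannot stay above $1$ for all $n$ if the weights are summable. A concrete failure: let $S=B_{X^{\ast}}$ and $A=\{mv:m\in\mathbb{N}\}$ for a fixed unit vector $v$. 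Every $a_n$ is some $m_nv$, maximising each hump forces $|f_k(v)|=1$, and then $|f_k(a_n)|=m_n$ for \emph{every} $k$; your inequality becomes $\alpha_n m_n>n+1+m_n\sum_{k<n}\alpha_k$, i.e.\ essentially $\alpha_n>\sum_{k<n}\alpha_k$ for all $n$, which is impossible under $\sum\alpha_n=1$. The lemma is of course true in this example --- any $f$ with $f(v)\ne 0$ works --- just not through your triangle-inequality estimate. The bound $\bigl(\sum_{k<n}\alpha_k\|f_k\|\bigr)\|a_n\|$ you record for the past is exactly the culprit: it scales with $\|a_n\|$, which must itself blow up to make $|f_n(a_n)|$ large. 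Any repair has to avoid this crude estimate --- for instance by stopping if some finite partial sum $\sum_{k<n}\alpha_kf_k$ is already unbounded on $A$, and otherwise using the \emph{actual} bound of that partial sum on $A$ in place of $\|\cdot\|\,\|a_n\|$ --- and it is in engineering such choices (e.g.\ making $f_n$ annihilate the span of the earlier $a_j$ via a quotient-plus-Hahn--Banach step) that Hahn--Banach genuinely enters, rather than in the domination $|f|\le p$, which requires no Hahn--Banach at all.
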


The strategy for proving the four fundamental theorems consisting in using Lemma \ref{l1} in the proof of (a special case of) the uniform boundedness principle (UBP) to the effect that in a normed space X,
\begin{enumerate}
\item[(a)] bounded subsets are precisely those which are weakly bounded and
\item[(b)] $weak^{\ast}$ bounded subsets of $X^{\ast}$ are norm- bounded, provided $X$ is complete.
\end{enumerate}

It can be shown that (the full force of) UBP can be derived from (a) whereas a proof of the open mapping theorem can
 be based on (a) and (b) above. Finally, it is well known how to derive closed graph theorem from the open mapping 
theorem (and conversely). For detailed proofs of these statements, one may look into the original source \cite{rh}.

\subsection{Proof of certain classical theorems of analysis}\label{s2.2}
\hspace*{\fill} \\
	As opposed to proofs of classical theorems like the Riemann mapping theorem (in complex analysis) or the 
Muntz-Sasz theorem regarding the description of sequences of positive integers $\{n_{k}\}$ such that the sequence 
of monomials $\{t^{n_{k}}\}$ has a closed linear span in $C[0,1]$ which are indeed quite complicated, it is 
possible to
 provide alternative neat proofs of these statements based on the Hahn Banach theorem. On the other hand, Peter
 Lax showed how to use Hahn- Banach theorem to solve the problem involving the existence of Green’s function for a
 given boundary value problem for the Laplace operator. Details of proofs can be found in \cite{wy} and (\cite[Section 9.2 and Section 9.5]{lax}), respectively.

  \subsection{Invariant version of HB-theorem}\label{s2.4}
 \hspace*{\fill} \\
   The ‘invariant’ version of HBT involves the possibility of extending functionals which are invariant w.r.t a suitable group of transformations to a functional which is also invariant.  More precisely,\\
 Given a group G of transformations acting on a real linear space $X$, a sublinear functional $p$ on $X$, a subspace 
 $Y$ of $X$, and a linear functional $g$ on $Y$, such that
 $$ T(Y)\subset Y,\,g(y)\leq p(y),\,p(Tx)\leq p(x),\,g(Ty)=g(y), $$
 $$\forall x\in X, \,\, \forall y \in Y ~\textnormal{and}~  \forall T\in G.$$
 Then there exists a linear functional $f$ on $X$ extending $g$ such that $f(x)\leq p(x)~  \textnormal{and} ~f(Tx)=f(x), ~\forall x\in X $~and ~$\forall T\in G$.
 
 The above result is well known to follow from the Markov-Kakutani (fixed point) theorem which states that certain 
 groups G of transformations acting as affine maps on a compact convex subset K of a locally convex space have a 
 common fixed point: there exists $x\in K$ such that $T(x)=x$ for all T in G. Among the possible candidates for G, 
 one can choose G to be an abelian group. In fact, the result holds for the more general class of groups called 
 amenable groups. To see how the proof works, assume that $X$ is a normed space with $g\in Y^{\ast},\| 
 g\|=1$ and consider the set
 $$ K=\{h\in X^{\ast};\| h\|\leq 1,h=g~\textnormal{on}~Y\}.$$
 The set $K$ is obviously convex and $weak^{\ast}$-closed. The classical Hahn-Banach theorem shows that $K$ is nonempty and, by virtue of Banach Alaoglu theorem,
  $K$ is $weak^{\ast}$-compact. Further, we see that for each $T\in G$, the assignment
 $$ \Psi_{T}:h\rightarrow h\circ T$$
 defines an affine map of K into itself. For a given $h_{1}\in X^{\ast}$, consider the typical $weak^{\ast}$-neighbourhood of $h_{1}T$:
 $$ U=\{h\in X^{\ast}:\mid h(x_{i})-h_{1}T(x_{i})\mid < \varepsilon\}$$
 determined by (finitely many) $x_{i}$'s in $X$ and $\varepsilon >0 $ and observe that $\Psi_{T}(V)\subset U$ where 
 V is the $weak^{\ast}$-neighbourhood of $h_{1}$ given by:
 $$ V=\{h\in X^{\ast}:\mid h(Tx_{i})-h_{1}T(x_{i})\mid < \varepsilon\}$$
 This shows that $\Psi_T$ is $weak^{\ast}$-continuous and so by the
  Markov-Kakutani theorem stated above, there exists $f\in K$ such that $f\circ T=\Psi_T (f)=f$ for all $T\in G$. This completes the proof.
 \begin{remark}\label{r3}
  As a useful generalisation of Markov-Kakutani theorem to certain noncommutative groups, 
 it is possible to consider a smallest family $\Im$ of semigroups of affine maps on K fulfilling certain conditions
  and then conclude that for each $F \in \Im$, there exists $x\in K$  such that $f(x)=x$, for all $f\in F$. The 
 family $\Im$ is assumed to
  satisfy the following properties:
 \begin{enumerate}
 \item[(i)]    Every abelian group of affine maps of K belongs to $\Im$.
 \item[(ii)]	  If a semigroup F acting on K has a normal subgroup H such that H together with $\frac{F}{H}$ belong to 
 $\Im$, then $F \in \Im$.
 \end{enumerate}
 \end{remark}
  The corresponding version of the ‘invariant’ Hahn-Banach theorem can be stated as follows:
 \begin{theorem}\label{t1}
 Let $Y$ be a subspace of a normed space $X$, $g\in Y^{\ast}, G\subset L(X)$ and let $\Im$ be the smallest family of semigroups of linear maps on $X$ satisfying
 \begin{enumerate}
 \item[(i)] 	Every commutative group of linear maps on $X$ belongs to $\Im$.
 \item[(ii)] If a semigroup F of linear maps on $X$ has a normal subgroup H such that H together with $\frac{F}{H}$ belong to $\Im$, then $\Im$.
 \end{enumerate}
 Further, assume that $G\in\Im$ such that
 \begin{enumerate}
 \item[(iii)] $T(Y)\subset Y,\forall T\in G$.
 \item[(iv)] $g\circ T=g,\forall T\in G$.
 \end{enumerate}
 Then, there exists $f\in X^{\ast}$ such that $f=g~ on~ Y,$ $~\| f\| = \| g\|$~ and~ $f\circ 
 T=f,$ $ \forall\, T \in G.$
 \end{theorem}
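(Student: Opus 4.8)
The plan is to repeat the Markov--Kakutani argument carried out above for a single commutative $G$, but now at the level of the generalised fixed--point principle recorded in Remark \ref{r3}. We may assume $g\neq 0$ and, after rescaling, $\|g\|=1$, and we set
$$ K=\{h\in X^{\ast}:\ \|h\|\le 1,\ h=g\ \text{on}\ Y\}. $$
By the classical Hahn--Banach theorem (Definitions \ref{d1} and \ref{d2}) $K\neq\emptyset$; it is convex and $weak^{\ast}$-closed and is contained in $B_{X^{\ast}}$, hence $weak^{\ast}$-compact by the Banach--Alaoglu theorem. For $T\in G$ put $\Psi_{T}:h\mapsto h\circ T$. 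Exactly as in the commutative case, $\Psi_{T}$ is affine and $weak^{\ast}$-continuous (if $h_{\alpha}\to h$ in the $weak^{\ast}$ topology then $\Psi_{T}(h_{\alpha})(x)=h_{\alpha}(Tx)\to h(Tx)=\Psi_{T}(h)(x)$ for each $x\in X$), and $\Psi_{T}$ maps $K$ into $K$: for $h\in K$ and $y\in Y$ we have $\Psi_{T}(h)(y)=h(Ty)=g(Ty)=g(y)$ by (iii)--(iv), while $\|\Psi_{T}(h)\|\le\|h\|\le 1$ because the maps of $G$ are norm--decreasing, as in the invariant form recalled above.

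Next I would pass from $G$ to $\Psi_{G}:=\{\Psi_{T}:T\in G\}$, a semigroup of affine self--maps of $K$. Since $\Psi_{S}\circ\Psi_{T}=\Psi_{T\circ S}$, the map $T\mapsto\Psi_{T}$ is a surjective anti--homomorphism; as $G$ is a group (being a member of $\Im$), it is in fact a group anti--isomorphism, so $\Psi_{G}$ is a group isomorphic to $G$. Because the minimal family $\Im$ of Remark \ref{r3} is defined solely through the abstract (semi)group structure by the very same conditions (i)--(ii), this membership transfers: $G\in\Im$ forces $\Psi_{G}\in\Im$. The generalised Markov--Kakutani theorem of Remark \ref{r3}, applied to the $weak^{\ast}$-compact convex set $K$ and the group $\Psi_{G}$ of $weak^{\ast}$-continuous affine maps, then produces a common fixed point: some $f\in K$ with $\Psi_{T}(f)=f$, i.e. $f\circ T=f$, for every $T\in G$. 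Since $f\in K$, we get $f=g$ on $Y$ and $\|f\|\le 1=\|g\|$; as $f$ extends $g$ we also have $\|f\|\ge\|g\|$, so $\|f\|=\|g\|$, and the proof is complete.

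The routine ingredients — affineness and $weak^{\ast}$-continuity of $\Psi_{T}$, nonemptiness and $weak^{\ast}$-compactness of $K$ — are exactly as in the commutative model already spelled out, so the step I expect to need genuine care is the transfer of $\Im$-membership in the second paragraph. Spelled out, this is an induction along the transfinite construction of the minimal family $\Im$: one verifies that the collection of groups whose image under $\Psi$ lies in the corresponding minimal family of affine semigroups on $K$ contains every commutative group (condition (i)) and is closed under the extension operation (ii) — for which one uses that a group anti--isomorphism sends commutative groups to commutative groups, normal subgroups to normal subgroups, and quotients to quotients — whence, by minimality, it contains all of $\Im$. A minor additional point is the contractivity of the $T\in G$ used to keep $\Psi_{T}(K)\subset K$; this is implicit in, and consistent with, the hypothesis $p(Tx)\le p(x)$ of the classical invariant Hahn--Banach theorem recalled earlier, here specialised to $p=\|\cdot\|$.
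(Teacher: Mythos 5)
Your argument is essentially the paper's own: the paper proves the commutative case by exactly this Markov--Kakutani scheme (the set $K$ and the maps $\Psi_T$), records the generalized fixed-point principle in Remark \ref{r3}, and for Theorem \ref{t1} simply refers to \cite{wi} for details, so your spelled-out transfer of $\Im$-membership from $G$ to $\Psi_G$ (where only the surjective anti-homomorphism, not injectivity, is actually needed) is precisely the intended remaining step. Your closing observation is also correct and worth keeping: the printed statement omits the contractivity hypothesis $\|Tx\|\le\|x\|$ (the specialization of $p(Tx)\le p(x)$ from the classical invariant version), and without it $\Psi_T$ need not map $K$ into itself — indeed the conclusion can fail, e.g.\ for the shear $T(x_1,x_2)=(x_1+x_2,x_2)$ on $\mathbb{R}^2$ with $Y=\mathrm{span}\{e_1\}$ — so reading that hypothesis as tacit is the right interpretation.
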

 (See \cite{wi} for details of proofs of these statements).
 
 \subsection{Existence of Banach limits}\label{s2.3}
 \hspace*{\fill} \\
 A well known application of the HBT concerns the existence of the so-called Banach limit - a shift invariant continuous linear 
 functional of norm 1 on the space $\ell_{\infty}$ which preserves non-negative sequences and which assigns the limit to each convergent sequence 
 in $\ell_{\infty}$. As a useful consequence of the existence of a Banach limit, it turns out that each compact metric space $X$  admits a countably 
 additive regular Borel measure which is invariant with respect to a given continuous map on $X$. This may be compared with a far deeper assertion
  that each compact metric space $X$ admits a finitely additive regular Borel probability measure which is invariant with respect to the group of
  all isometries on X. The latter statement is a consequence of a certain topological version of the Markov-Kakutani theorem valid for continuous
  G-actions on compact convex subsets of locally convex spaces. Here G is a locally compact group that is also topologically amenable, i.e., G supports a
  finitely additive invariant regular Borel probability measure. Thus every compact group G is topological amenable- the desired measure on G being witnessed
  by the Haar measure on G.
\subsection{Existence of Banach measures on $\mathbb{R}$ and $\mathbb{R}^{2}$}\label{s2.5}
\hspace*{\fill} \\
It is well known that Lebesgue measure on $\mathbb{R}^{n}$ cannot be extended as a countably additive translation invariant measure on all 
subsets of $\mathbb{R}^{n}$. The question arises whether it is possible instead to have a finitely additive rotation invariant measure $\mu$ defined on all
 subsets of $\mathbb{R}^{n}$ which normalises the cube $((\mu([0,1]^{n} )=1)$. (Such measures are called Banach measures in the literature). 
Unfortunately, Banach measures do not exist in $\mathbb{R}^{n}$ at least for $n\geq 3$ (as a result of Banach 
Tarski Paradox in $\mathbb{R}^{n}$ for $ n\geq 3)$. 
However, for n=1, 2, the existence of a Banach measure can be established as a beautiful application of (the invariant form of) Hahn Banach theorem which 
we shall use to prove the existence of a finitely additive, translation invariant measure $\mu$ ~ on~ $P(\mathbb{R}^{n})$~ with ~$\mu(\mathbb{R}^{n} )=1$, (although 
Banach measures are known to exist only in $\mathbb{R}$ and $\mathbb{R}^{2}$). The setting for applicability of the ‘invariant’ 
Hahn Banach theorem is provided by the following data:
$$ G=\mathbb{R}^{n}$$
$$ X=B(G),\,\textnormal{ the ~space~ of~ bounded~ functions~ on ~G},$$
$$ Y=\{f\in B(G);f=c\chi_{G},c\in \mathbb{R}\}$$
$$ p(f)=\sup_{x\in G}\{f(x):f\in G\}$$
$$ G -\textnormal{ action ~on~X~given~by:}f\rightarrow f_{a} \textnormal{~where~} f_{a}(x)=f(a+x)$$
$$ H(g)=c, \textnormal{~for~} g=c\chi_{G}\in Y.$$

Thus, by the invariant Hahn Banach theorem, there exists an F, a linear functional on X satisfying the following properties:
$$ F(g)=H(g),\forall g\in Y$$
$$ F(f)\leq p(f),\forall f\in X$$
$$ F(f)=F(f_{a} ), \forall f\in X,a\in G.$$
Finally, defining $\mu(A)=F(\chi_{A}) \textnormal{~for~} A\subset \mathbb{R}^{n}$, gives a finitely additive translation invariant measure on
 $P(\mathbb{R}^{n}) \textnormal{~ with~} \mu(\mathbb{R}^{n} )=1$.
 \begin{remark}\label{r4}
The same proof works for an arbitrary abelian group in place of $\mathbb{R}^{n}$.
 \end{remark}
 \subsection{Existence of non-measurable sets}\label{s2.6}
 \hspace*{\fill} \\
	The existence of a nonmeasurable set of real numbers is invariably proved as a consequence of the 
axiom of choice which is also used as an important tool in the proof of the Hahn Banach theorem. It is now known 
that Hahn Banach theorem can also be obtained via Ultrafilter theorem which is a consequence of the axiom of 
choice. However, it is also known that none of these implications can be reversed. In particular, Hahn Banach 
theorem does not imply the axiom of choice. In a short but remarkable paper \cite{fn}, Foreman uses the Hahn Banach 
theorem to prove the existence of a nonmeasurable set and therefore, without the use of the axiom of choice$!$ 
 Immediately thereafter, Pawlikowski \cite{pi} used the idea of 
proof in Foreman$'$s theorem to deduce Banach Tarski paradox as a consequence of Hahn- Banach theorem.\\

Before we pass on to the next section where we discuss certain not-so-well-known aspects of the classical HBT, let us briefly pause to mention that there are suitable versions of this theorem which are known to be valid in different settings encountered in analysis. Let us state the following noncommutative version of HBT, due to Wittscock \cite{wk}, which is a fundamental result in the theory of operator spaces with the appropriate morphisms being defined by completely bounded maps.\\

Given a $C^{\ast}$ algebra $\mathcal{A}$, a closed subalgebra $\mathcal{B}\subset \mathcal{A}$ and a completely bounded map $S:\mathcal{B}\rightarrow B(H)$, 
there exists a completely bounded map $T:\mathcal{A}\rightarrow B(H)$ which extends S.\\
\section{Some less known aspects of Hahn-Banach Theorem}\label{sc}
 \begin{enumerate}
 \item[\ref{sc1}]	Operator-analogue of HB-theorem.
 \item[\ref{s3.2}]	Uniqueness issues involving HB-theorem
 \item[\ref{s3.3}]	Bilinear(multilinear) versions of HB-theorem.
 \end{enumerate}
\subsection{Operator-analogue of HB-theorem}\label{sc1}
\hspace*{\fill} \\
There are easy examples to show that the operator-analogue of HB-theorem may fail. Let $Y$ be a nonclosed subspace of a normed space $X$. Then the identity map on $Y$ does not admit a continuous linear extension to $X$.\\
A more nontrivial example where the subspace $Y$ is even closed is provided by the identity map on the closed subspace $c_{0}$  of $\ell_{\infty}$ 
which cannot be extended to a continuous linear map on $\ell_{\infty}$ (R.S.Phillips and A.Sobczyk). In other 
words, we say that $c_{0}$ is not {\it complemented} in $\ell_{\infty}$. (See \cite{sk} for a simple proof).

On the other hand, there are infinite dimensional Banach spaces for which the vector-valued analogue of HB- theorem 
does indeed hold:
\begin{theorem}[R.S.Phillips]\label{t2}
 Given Banach spaces $Z\subset X~and~g:Z\rightarrow\ell_{\infty},\exists 
~f:X\rightarrow\ell_{\infty}~s.t.f|_{Z}=g.$ 
Moreover,  $f$ can be chosen to satisfy \\
$\| g \| = \| f \|$.
\end{theorem}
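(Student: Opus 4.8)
The plan is to exploit the fact that $\ell_\infty = \ell_\infty(\mathbb{N})$ is an injective Banach space, and the cleanest route is to reduce the vector-valued extension problem to the scalar Hahn--Banach theorem coordinatewise. First I would observe that a bounded linear map $g : Z \to \ell_\infty$ is nothing but a bounded family $(g_n)_{n \ge 1}$ of scalar functionals $g_n \in Z^*$, where $g_n(z)$ is the $n$-th coordinate of $g(z)$, and that the operator norm is precisely $\|g\| = \sup_n \|g_n\|_{Z^*}$ (this is where the sup-norm on the range is essential — it decouples the coordinates). So the boundedness of $g$ says exactly that $\sup_n \|g_n\| < \infty$.

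Next I would apply the classical Hahn--Banach theorem (Definition~\ref{d1}, or its complex form Definition~\ref{d2}) separately to each $g_n$: for every $n$ there is $f_n \in X^*$ with $f_n|_Z = g_n$ and $\|f_n\|_{X^*} = \|g_n\|_{Z^*}$. Then I would assemble these into a single map $f : X \to \ell_\infty$ by setting $f(x) = (f_n(x))_{n \ge 1}$. The point to check is that $f$ actually lands in $\ell_\infty$: for each fixed $x$, $|f_n(x)| \le \|f_n\| \, \|x\| = \|g_n\| \, \|x\| \le \|g\| \, \|x\|$, so $(f_n(x))_n$ is bounded, with $\|f(x)\|_\infty \le \|g\| \, \|x\|$. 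Linearity of $f$ is immediate from linearity of each $f_n$. This gives $\|f\| \le \|g\|$, and since $f$ extends $g$ we automatically have $\|f\| \ge \|g\|$, hence equality; this simultaneously handles the "moreover" clause about norm preservation.

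I do not expect a serious obstacle here — the only thing that requires any care is the first step, namely verifying that a bounded operator into $\ell_\infty$ is the same data as a norm-bounded sequence of functionals with the operator norm equal to the supremum of the functional norms. One has to be slightly careful that taking coordinatewise extensions does not destroy joint boundedness, but the uniform bound $\|g_n\| \le \|g\|$ propagates through Hahn--Banach (which preserves each individual norm) and is exactly what keeps $f(x)$ in $\ell_\infty$. In other words, the sup-norm structure of $\ell_\infty$ is doing all the work: it is what makes the extension problem separable into countably many independent scalar problems, each solvable by the classical theorem. It is worth remarking that the same argument shows more generally that $\ell_\infty(\Gamma)$ is $1$-injective for any index set $\Gamma$, and that $\mathbb{R}$ itself is the case $|\Gamma| = 1$, recovering the ordinary Hahn--Banach theorem.
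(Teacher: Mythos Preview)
Your proposal is correct and follows exactly the approach indicated in the paper, which notes that the theorem is a straightforward consequence of the Hahn--Banach theorem applied to each component of the given $\ell_\infty$-valued map. You have simply spelled out the details of that one-line argument, including the verification that $\|f\| = \sup_n \|f_n\| = \sup_n \|g_n\| = \|g\|$.
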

The above theorem is a straightforward consequence of the Hahn-Banach theorem applied to each component of the given map taking values in $\ell_{\infty}$.

Banach spaces like $\ell_{\infty}$ satisfying the properties of the above theorem have acquired a special name in functional analysis.
\begin{definition}\label{d3}
\textnormal{A Banach space $Z$ is said to be (isometrically) injective  if each $Z$- valued continuous linear mapping $f$ on any 
subspace of a given Banach space can be extended to a continuous linear map $g$ on the whole space while preserving the norm. }
\end{definition}
 If the condition of ‘preserving the norm’ is dropped, it still holds that for some $\lambda > 1 $, one has$\| g\|\leq\lambda\| f\|$ .
 In that case, we say that $Z$ is $\lambda-$injective. Thus injective Banach spaces are exactly those which are 1-injective.

The above result is subsumed as a very special case of the following more general theorem  which completely 
characterizes injective Banach spaces.
\begin{theorem}[Nachbin, Goodner, Hasumi, Kelley, early 1950s]\label{t3}
A Banach space X is $($isometrically$)$ injective if and only if it is
 linearly isometric with $C(K)$ for some compact Hausdorff space K which is extremally disconnected $($i.e., each open set in K has an open closure$)$.
\end{theorem}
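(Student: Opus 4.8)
The plan is to route the proof through the \emph{binary intersection property} (B.I.P.): a Banach space $Z$ has the B.I.P.\ if every family of pairwise intersecting closed balls of $Z$ has a common point. There are three moving parts: (1) Nachbin's lemma, that a Banach space is $1$-injective if and only if it has the B.I.P.; (2) the ``if'' direction of the theorem, obtained by checking that an extremally disconnected $C(K)$ has the B.I.P.; and (3) the ``only if'' direction, which needs a representation theorem manufacturing a concrete $C(K)$ (with $K$ extremally disconnected) out of the abstract B.I.P. The third part carries the real weight.

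For Nachbin's lemma, the direction ``B.I.P.\ $\Rightarrow$ $1$-injective'' I would prove by the usual one-coordinate-at-a-time argument: given $g\colon Y\to Z$ of norm $1$ on a subspace $Y\subset X$, a Zorn's lemma reduction leaves us to extend $g$ across a single vector $x_{0}\notin Y$; a candidate value $g(x_{0})$ makes the extension norm-one exactly when it lies in $\bigcap_{y\in Y}\bar B_{Z}\bigl(g(y),\,\|y-x_{0}\|\bigr)$, and these balls intersect pairwise because $\|g(y)-g(y')\|\le\|y-y'\|\le\|y-x_{0}\|+\|x_{0}-y'\|$, so the B.I.P.\ supplies a legal value. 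For the converse, given pairwise intersecting balls $\bar B(z_{i},r_{i})$ in $Z$, I would adjoin one dimension and equip $Z\oplus\mathbb{R}e$ with a norm that restricts to the norm of $Z$ and satisfies $\|z_{i}-e\|\le r_{i}$ for all $i$ (pairwise intersection, together with completeness of $Z$ in the degenerate cases, is exactly what makes such a norm available); $1$-injectivity of $Z$ extends $\mathrm{id}_{Z}$ to a norm-one projection $P$ of $Z\oplus\mathbb{R}e$ onto $Z$, and then $P(e)\in\bigcap_{i}\bar B(z_{i},r_{i})$.

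For the ``if'' direction I would invoke the Stone correspondence: for compact Hausdorff $K$, extremal disconnectedness of $K$ is equivalent to $C(K)$ being a Dedekind-complete vector lattice. A closed ball of $C(K)$ is an order interval, $\bar B(f,r)=[f-r,\,f+r]$, and two such meet iff $f_{i}-r_{i}\le f_{j}+r_{j}$ pointwise; hence for a pairwise intersecting family $\{\bar B(f_{i},r_{i})\}$ the set $\{f_{i}-r_{i}\}$ is order-bounded above, so $h:=\sup_{i}(f_{i}-r_{i})$ exists in $C(K)$ by Dedekind completeness, and the least-upper-bound property gives $f_{i}-r_{i}\le h\le f_{j}+r_{j}$ for all $i,j$, i.e.\ $h\in\bigcap_{i}\bar B(f_{i},r_{i})$. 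So $C(K)$ has the B.I.P., hence is $1$-injective by Nachbin's lemma.

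The ``only if'' direction is the heart of the matter. An injective $X$ has the B.I.P.\ (Nachbin), and from this I must produce an isometry of $X$ onto some $C(K)$ with $K$ extremally disconnected. The route I would take is to endow $X$ with the structure of an \emph{AM-space with order unit} and invoke the Kakutani representation theorem. Concretely: realize $X$ as the range of a norm-one projection on $\ell_{\infty}(\Gamma)=C(\beta\Gamma)$, via the canonical isometric embedding $X\hookrightarrow\ell_{\infty}(B_{X^{\ast}})$ together with injectivity; then exploit the B.I.P.\ to isolate an extreme point $e$ of $B_{X}$ and a closed cone $X_{+}$ (morally $X_{+}=\{x:\|\,\|x\|\,e-2x\,\|\le\|x\|\}$) for which $(X,X_{+},e)$ is a Banach lattice with $B_{X}=[-e,e]$ and $\|x\vee y\|=\max(\|x\|,\|y\|)$ on $X_{+}$; Kakutani then delivers an isometric (and lattice) isomorphism $X\cong C(K)$. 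Finally the B.I.P., transported to $C(K)$, forces $C(K)$ to be Dedekind complete, which by Stone's correspondence is exactly extremal disconnectedness of $K$. I expect the real obstacle to be precisely this representation step --- producing the order unit, checking that the candidate cone is proper, generating, norm-closed and lattice-ordered, and verifying the $M$-norm identity --- i.e.\ upgrading a purely metric hypothesis to the order-and-algebra structure of a space of continuous functions. Everything else (Nachbin's lemma, the Stone correspondence, and the ball arithmetic in $C(K)$) is comparatively routine.
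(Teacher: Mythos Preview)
The paper does not actually prove this theorem: immediately after stating it, the author writes ``See \cite[Section 4.3 for a complete proof]{ak}'' and moves on to list equivalent conditions (norm-one projection, binary intersection property, Jung's constant equal to $1$) without argument. So there is no in-paper proof to compare against; your proposal is being measured against the classical literature the paper cites.

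Against that benchmark your outline is essentially the standard Nachbin--Goodner--Kelley route and is sound in its architecture. A few remarks on the parts you flagged or glossed. In Nachbin's lemma, the converse direction (``$1$-injective $\Rightarrow$ B.I.P.'') via adjoining a one-dimensional summand is correct, but constructing the norm on $Z\oplus\mathbb{R}e$ so that $\|z_{i}-e\|\le r_{i}$ holds simultaneously is the place where care is needed; the usual device is to take the Minkowski functional of the convex hull of $B_{Z}\cup\{\pm e\}$ after translating, and pairwise intersection is exactly what makes this gauge a norm extending the original. In the ``only if'' direction you have correctly located the crux: the step ``isolate an extreme point $e$ of $B_{X}$'' is precisely Kelley's contribution (Nachbin and Goodner had to assume such an $e$ exists), and it is not automatic --- you should expect to spend real effort there, using the B.I.P.\ to show $B_{X}$ has an extreme point. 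Your candidate cone formula is in the right spirit but nonstandard; the cleaner path is to push the order down from $\ell_{\infty}(\Gamma)$ via the norm-one projection and check that the induced partial order makes $X$ an $M$-space with unit $e$. Finally, note that the paper's list of equivalent conditions is stated for \emph{real} Banach spaces; the complex case (Hasumi's part of the attribution) requires an extra argument that you have not addressed.
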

 (See \cite[Section 4.3 for a complete proof]{ak}).

For real Banach spaces, the above condition is equivalent to each of the following conditions involving a nonlinear analogue of the Hahn Banach theorem:
\begin{enumerate}
\item[(i)] Every Banach space $Z$ containing $X$ containing $X$ as a subspace admits a norm-one projection which is equal 
to the identity onto $X$.
\item[(ii)] $X$ has the binary intersection property: every family of mutually intersecting closed balls in $X$ has a nonempty intersection (See Section (F)).
\item[(iii)] $\gamma_{X}=1$.(Here, $\gamma_{X}$ denotes Jung’s constant of X which is defined to be the smallest positive number r such that each set of
 diameter 1 in $X$ is contained in a ball of radius $\frac{r}{2}$).
\end{enumerate}
\begin{remark}\label{r5}
\begin{enumerate}
\item[]
\item[(a).] As is easily seen, the sup-norm on $\mathbb{R}^{n}$ (or on $\ell_{\infty}$) obviously has the binary intersection property (because 
of the polyhedral structure of their unit balls)  whereas this property does not hold for the Euclidean norm (even 
on $\mathbb{R}^{2}$). 
This is so because three intersecting circles may have an empty intersection. The same argument applies to provide a proof of the assertion made 
in Proposition \ref{p1} of Section \ref{sa}.
\item[(b).] As seen above, the injective space $\ell_{\infty}$ is nonseparable. The question whether one can choose an injective space to be separable is answered in 
the negative by noting that C(K) is separable exactly when K is metrizable. However, a metrizable space K which is extremally disconnected is necessarily finite,
 in which case C(K) is isometric to $\ell_{n}^{\infty}$. This shows that an injective space is either too big (non-separable) or too small in the sense of being 
finite dimensional. Even more is true: the latter statement is valid even without the assumption of ‘preservation of norms’. This is a consequence of a deep theorem
of Lindenstrauss to the effect that $\ell_{\infty}$ is prime in the sense that all its complemented subspaces are isomorphic to $\ell_{\infty}$. Thus, the assumed 
existence of a separable (not necessarily isometrically) injective space X would entail, by virtue of the separable universality of $\ell_{\infty}$, that X is a
 subspace of  $\ell_{\infty}$ and so by injectivity of X, is complemented in $\ell_{\infty}$ and, therefore, isomorphic to $\ell_{\infty}$ by the primality of
 $\ell_{\infty}$, contradicting the nonseparability of $\ell_{\infty}$.
\item[(c).] It is a famous theorem of Rosenthal \cite{rl2} that an infinite dimensional injective Banach contains (an isometric) copy of $\ell_{\infty}$.
\item[(d).] An injective Banach may not necessarily be a dual space. This was shown by Dixmier, though in a different context, and later also by Rosenthal \cite{rl1}.
\item[(e).] In spite of (b) above, it is still possible to use a separable space Z to effect a HB-extension property for Z-valued mappings, provided the (domain) 
spaces involved are separable. More precisely, we can choose $Z = c_{0}$:
\end{enumerate}
\end{remark}
\begin{theorem}[Sobczyk \cite{sk}]\label{t4}
Given a separable space $Y\subset X~and~g:Z\rightarrow c_{0},\exists 
~f:Y\rightarrow c_{0}~s.~t.f|_{Y}=g.$ 
Further, we can choose $f$ to satisfy: $$\| f\|\leq 2 \| g \|.$$
\noindent{\textnormal{\bf Converse}}\textnormal{ (Zippin \cite{zn1}).} $c_{0}$ is unique (as a separable Banach space) with the above property.
\end{theorem}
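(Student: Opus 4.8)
The plan is to prove the two assertions separately: first the extension result of Sobczyk, then the converse uniqueness statement of Zippin.

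For the \emph{Sobczyk half}, I would proceed as follows. Let $g : Y \to c_0$ be bounded with $Y$ a separable closed subspace of $X$. Write $g = (g_n)_{n\geq 1}$ componentwise, where each $g_n = e_n^{\ast}\circ g \in Y^{\ast}$ and $\|g_n\|\leq\|g\|$; moreover $g_n(y)\to 0$ for every $y\in Y$, so the sequence $(g_n)$ lies in $B_{Y^\ast}(0,\|g\|)$ and is $weak^{\ast}$-null. By Hahn--Banach, extend each $g_n$ to some $h_n\in X^{\ast}$ with $\|h_n\|=\|g_n\|\leq\|g\|$. The difficulty is that the extensions $(h_n)$ need \emph{not} be $weak^{\ast}$-null on $X$, so $x\mapsto (h_n(x))_n$ need not land in $c_0$. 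The key step is a perturbation argument: since $X$ (hence $Y$) is separable, the $weak^{\ast}$ topology on the ball $\|\varphi\|\leq\|g\|$ of $X^{\ast}$ is metrizable, say by a metric $d$. The $weak^\ast$-closure $K$ of $\{h_n : n\geq 1\}$ in this ball is $weak^\ast$-compact and metrizable, and every cluster point $\varphi$ of $(h_n)$ must vanish on $Y$ (because $g_n\to 0$ $weak^\ast$ on $Y$). Hence $\mathrm{dist}_{d}(h_n, Y^{\perp}\cap \|g\|B_{X^\ast}) \to 0$: choose $\psi_n\in Y^{\perp}$ with $\|\psi_n\|\leq\|g\|$ and $d(h_n,\psi_n)\to 0$. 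Then set $f_n = h_n-\psi_n$. Each $f_n$ still extends $g_n$ (since $\psi_n$ vanishes on $Y$), $\|f_n\|\leq 2\|g\|$, and now $f_n\to 0$ $weak^{\ast}$ on all of $X$. Therefore $f(x):=(f_n(x))_n$ defines a bounded linear map $f:X\to c_0$ with $f|_Y=g$ and $\|f\|\leq 2\|g\|$. I would check the metrizability/perturbation step carefully, as this is where separability is genuinely used and where the loss of the constant $2$ originates.

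For the \emph{Zippin half}, the claim is that $c_0$ is, up to isomorphism, the \emph{only} separable Banach space $Z$ such that $Z$-valued operators extend (with some uniform constant) across separable superspaces --- i.e.\ $Z$ is separably injective. Here I would not attempt a self-contained argument but rather invoke the structural facts already available in the paper together with standard Banach space theory. The strategy: suppose $Z$ is an infinite-dimensional separable space with the stated extension property. First, $Z$ embeds isometrically into $C[0,1]$, and more usefully into $\ell_\infty$ via a countable norming set of functionals; but one wants a \emph{separable} ambient space, so instead embed $Z$ isometrically into $C[0,1]$ (which contains every separable space). The extension property then forces a bounded projection from $C[0,1]$ onto the copy of $Z$, so $Z$ is isomorphic to a complemented subspace of $C[0,1]$. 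One then shows $Z$ must be isomorphic to a $\mathcal L_\infty$-space, and in fact an infinite-dimensional separably injective space cannot be reflexive and must contain $c_0$; combining this with the separable-injectivity one derives that $Z$ contains a complemented copy of $c_0$ and, conversely, that $Z$ embeds as a complemented subspace of $c_0$, whence $Z\cong c_0$ by Pe\l czy\'nski's theorem that complemented infinite-dimensional subspaces of $c_0$ are isomorphic to $c_0$. The genuinely hard input --- which I would cite rather than prove --- is Zippin's theorem itself (and the $\mathcal L_\infty$-space machinery of Lindenstrauss--Pe\l czy\'nski behind it); the elementary part is only the reduction showing a separably injective $Z$ is complemented in $C[0,1]$ and contains $c_0$.

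The main obstacle, and the step deserving the most care, is the perturbation lemma in the Sobczyk half: identifying $Y^\perp$ (intersected with a ball) as $weak^\ast$-dense-enough near the sequence $(h_n)$, which rests on the metrizability of $weak^\ast$-compact sets in $X^\ast$ for separable $X$ and on the observation that every $weak^\ast$-cluster point of $(h_n)$ annihilates $Y$. Everything else --- the componentwise Hahn--Banach extension, the bookkeeping of norms, and (for the converse) the citation of Zippin's and Pe\l czy\'nski's theorems --- is routine by comparison.
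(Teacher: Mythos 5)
The paper itself offers no proof of this theorem: it is quoted with attributions to Sobczyk \cite{sk} and Zippin \cite{zn1} (only the easier Phillips theorem before it gets a one-line componentwise Hahn--Banach justification), so your proposal supplies an argument where the paper supplies a citation. Judged on its own, your Sobczyk half is correct and is the standard Veech-style proof: componentwise Hahn--Banach extensions $h_n$ with $\|h_n\|\le\|g\|$, weak$^\ast$ metrizability of the ball of $X^\ast$ coming from separability, the observation that every weak$^\ast$ cluster point of $(h_n)$ annihilates $Y$, and the perturbation $f_n=h_n-\psi_n$ with $\psi_n\in Y^{\perp}$, $\|\psi_n\|\le\|g\|$, which is exactly where the factor $2$ originates; the compactness argument you sketch does give $d(h_n,\,Y^{\perp}\cap\|g\|B_{X^\ast})\to 0$ and then $f_n\to 0$ weak$^\ast$ on all of $X$, so $f=(f_n)_n$ lands in $c_0$ as required.

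One point to fix: you open by assuming only that the subspace $Y$ is separable (the theorem as printed in the paper is garbled in the same way), but the hypothesis actually needed --- and the one you silently switch to when you invoke metrizability --- is separability of the ambient space $X$. With $Y$ separable alone the statement is false: the Phillips--Sobczyk example $c_0\subset\ell_\infty$ quoted earlier in the paper is the counterexample, and the remark following the theorem (WCG $X$, or $X/Y$ separable) confirms that the intended hypothesis is on $X$. For the Zippin half, citing \cite{zn1} is entirely appropriate --- the paper does the same, and the uniqueness assertion is a genuinely deep theorem --- but your intermediate chain (complementation in $C[0,1]$, $\mathcal{L}_\infty$-structure, complemented copies of $c_0$ in both directions, then Pe\l czy\'nski's theorem on complemented subspaces of $c_0$) is a plausibility sketch rather than an outline of Zippin's actual argument, and its middle steps are themselves nontrivial; as written it should not be mistaken for a reduction of Zippin's theorem to standard facts.
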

\begin{remark}
 Theorem \ref{t4}  holds also for WCG (compactly weakly generated) spaces $X$ as also for those pairs of Banach spaces $(X, Y)$ for 
which $Y$ is a subspace of $X$ such that the quotient space $X/Y$ is separable. Further, there are nonseparable Banach spaces $Z$ for
 which Theorem 5 holds with $c_{0}$ replaced by $Z!$ A recent remarkable development in this area shows that the space $L_{1}^{b}(\mathbb{R})$  
consisting of bounded Lebesgue integrable functions on $\mathbb{R}$ has this property (See \cite{ascgm}).
\end{remark}

The above discussion motivates the following question. Unless otherwise stated, all the mappings appearing in our 
discussion will be bounded linear mappings.\\
Q: Describe the class of Banach spaces $X$ satisfying (HBEP) with respect to $C[0,1]$:\\
(*) For all subspaces $Y\subset X$ and $g:Y\rightarrow C[0,1], \exists f:X\rightarrow C[0,1] s.t. f|_{Z}=g$ 
and $\| f\|=\| g\|$.
\begin{remark}\label{t5}
\begin{enumerate}
\item[]
\item[(a).] It's a highly nontrivial theorem of Milutin (see \cite[Theorem 4.4.8]{ak}) that for all uncountable compact metric 
spaces K, C(K) is 
linearly isomorphic with C[0,1]. As a consequence of Banach-Stone theorem, it follows that these two spaces are not 
always (linearly) isometric. In any case, 
C(K) always embeds isometrically as a subspace of C[0,1]-thanks to Banach-Mazur embedding theorem. In view of these facts, it suffices to check the 
HB-extension property for C(K) spaces by considering the special space C[0,1] as a generic example.\\
\item[(b).] It is not always the case that under the conditions of the above question (Q), HB-extensions can be effected without increase of norm. 
Accordingly, following Kalton, we shall say that the pair $(Y,X)$ of Banach spaces with $Y\subset X$ has $(\lambda,C)$ extension property if 
(*) holds with $\| f\|\leq \lambda \| g\|$. Further, $X$ is said to have $(\lambda,C)$ 
extension property 
if $(Y,X)$ has $(\lambda,C)$ extension property(EP) for all subspaces $Y$ of $X$.
\end{enumerate}
\end{remark}
Some nontrivial examples of this phenomenon are given below.
\begin{theorem}[Lindenstrauss-Pelczynski \cite{lp}]\label{t6}
$c_{0}~has ~(1+\epsilon,C)$-extension property for all $\epsilon >0$.
\end{theorem}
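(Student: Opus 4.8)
The plan is to recast the problem as the existence of a near-isometric, weak$^{*}$-continuous (but in general \emph{nonlinear}) selection of Hahn--Banach extensions, and then to produce such a selection from the coordinate structure of $c_{0}$. Normalise $\|g\|=1$ and recall the standard dictionary between $C[0,1]$-valued operators and weak$^{*}$-continuous families of functionals: an operator $g\colon Y\to C[0,1]$ amounts to a weak$^{*}$-continuous map $\gamma\colon[0,1]\to(B_{Y^{*}},w^{*})$ via $g(y)(t)=\langle y,\gamma(t)\rangle$, with $\|g\|=\sup_{t}\|\gamma(t)\|$; similarly an extension $f\colon c_{0}\to C[0,1]$ of $g$ with $\|f\|\le 1+\epsilon$ is exactly a weak$^{*}$-continuous $\widetilde\gamma\colon[0,1]\to(1+\epsilon)B_{\ell_{1}}$ (recall $c_{0}^{*}=\ell_{1}$) with $\widetilde\gamma(t)|_{Y}=\gamma(t)$ for all $t$. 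So it suffices to build, once and for all, a positively homogeneous map
$$\sigma\colon(B_{Y^{*}},w^{*})\longrightarrow\big((1+\epsilon)B_{\ell_{1}},w^{*}\big)$$
that is continuous and is a section of the restriction map $\rho\colon\ell_{1}=c_{0}^{*}\to Y^{*}=\ell_{1}/Y^{\perp}$ (i.e. $\sigma(\nu)|_{Y}=\nu$, where $Y^{\perp}=\{\xi\in c_{0}^{*}:\xi|_{Y}=0\}$): then $\widetilde\gamma:=\sigma\circ\gamma$ works, and $f(x)(t):=\langle x,\widetilde\gamma(t)\rangle$ is automatically linear in $x$, restricts to $g$ on $Y$ precisely because $\sigma$ is a section, and has norm $\le 1+\epsilon$ because $\sigma$ lands in $(1+\epsilon)B_{\ell_{1}}$. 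The gain is that $\sigma$ need not be linear, which is exactly the slack required.

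To build $\sigma$ I would exploit that $c_{0}$ has a monotone shrinking basis $(e_{n})$ with coordinate projections $P_{n}$ ($\|P_{n}\|=\|I-P_{n}\|=1$, $P_{n}\to I$ strongly) --- equivalently, that $\ell_{1}=c_{0}^{*}$ has a $1$-unconditional coordinate basis, so its norm is \emph{additive over disjoint coordinate blocks}. The natural route is a blocking-and-gluing construction: fix a rapidly increasing $0=n_{0}<n_{1}<n_{2}<\cdots$; over each coordinate block $(n_{k-1},n_{k}]$ only a \emph{finite-dimensional} Hahn--Banach extension of the relevant ``piece'' of $\nu$ is needed, which can be done isometrically (finite-dimensional $\ell_{\infty}$-type spaces are $1$-injective, cf. Proposition \ref{p1}) and chosen to depend continuously on $\nu$ by a standard continuous-selection argument in finite dimensions; one then glues the block contributions into a weak$^{*}$-convergent series $\sigma(\nu)=\sum_{k}\sigma_{k}(\nu)$, with $\sigma_{k}(\nu)$ supported on block $k$. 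Because $c_{0}$ is a $\mathcal{L}_{\infty,1+\epsilon}$-space --- concretely, $(e_{n})$ exhausts $c_{0}$ by $1$-complemented subspaces isometric to $\ell_{\infty}^{m}$, and each $y\in Y$ is norm-approximated by its truncations $P_{n}y$ --- the block lengths can be made to grow fast enough (relative to $\epsilon$ and to how well the blocks capture $Y$) that the accumulated error stays below $\epsilon$. The uniform bound $\sum_{k}\|\sigma_{k}(\nu)\|\le(1+\epsilon)\|\nu\|$ over the disjoint blocks then delivers both the norm estimate $\|\sigma(\nu)\|\le(1+\epsilon)\|\nu\|$ and, since any fixed $x\in c_{0}$ has block-norms tending to $0$, the weak$^{*}$ continuity of $\sigma$.

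The real difficulty is the \emph{conjunction} of the three demands on $\sigma$, and in particular checking that the blockwise-assembled $\sigma(\nu)$ restricts correctly to \emph{all} of $Y$ (not merely to the blocked subspaces) while the norm bookkeeping over blocks and the continuity hold at the same time. Any two of the three are cheap: plain Hahn--Banach gives an isometric section with no continuity; a bounded \emph{linear} weak$^{*}$-continuous section of $\rho$ would be a bounded linear projection of $c_{0}$ onto $Y$ of near-unit norm, which does not exist for most closed subspaces $Y\subset c_{0}$ (and when a projection does exist, Sobczyk's theorem only bounds it by $2$). Reconciling near-isometry with weak$^{*}$ continuity is therefore the technical heart, and it is precisely where the coordinate structure of $c_{0}$ is indispensable: for a general separable ambient space the optimal constant is the $2$ of Sobczyk's theorem (Theorem \ref{t4}), so the improvement to $1+\epsilon$ \emph{must} use $c_{0}$ specifically. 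Finally, since the range entered only through the operators-into-$C[0,1]$ dictionary, the same argument yields the $(1+\epsilon)$-extension property for operators from subspaces of $c_{0}$ into any $C(K)$, hence into any $\mathcal{L}_{\infty,1}$-space.
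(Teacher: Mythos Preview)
The paper does not supply its own proof of Theorem~\ref{t6}; the result is simply quoted from \cite{lp}, so there is no in-paper argument against which to compare your proposal.

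That said, your outline is the correct one and is essentially the Lindenstrauss--Pelczynski strategy (in the form later systematised by Zippin): reduce, via the operators-into-$C(K)$ dictionary, to constructing a weak$^{*}$-to-weak$^{*}$ continuous, positively homogeneous, $(1+\epsilon)$-bounded section $\sigma$ of the restriction map $\ell_{1}=c_{0}^{*}\to Y^{*}$, and then build $\sigma$ from the monotone shrinking basis of $c_{0}$ through a blocking scheme that exploits the $\mathcal{L}_{\infty,1+\epsilon}$ structure. Your diagnosis of where the work lies --- that near-isometry and weak$^{*}$ continuity together force the section to be nonlinear, and that this is exactly what distinguishes $c_{0}$ from the general separable case governed by Sobczyk's constant $2$ --- is accurate.

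What remains genuinely unfinished in your proposal is the actual construction and verification: you appeal to ``a standard continuous-selection argument in finite dimensions'' for the block pieces $\sigma_{k}$, and to ``block lengths growing fast enough'' for the global norm and restriction bookkeeping, but these are not routine details --- making them precise (in particular, ensuring that the glued series $\sum_{k}\sigma_{k}(\nu)$ restricts correctly to \emph{all} of $Y$, not just to $P_{n_{k}}Y$, while the $\ell_{1}$-norm stays below $1+\epsilon$) is the entire technical content of the theorem. So the architecture is right, but the proof proper has yet to be written.
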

\begin{theorem}[Zippin \cite{zn2}]\label{t7}
For $p\neq 1, \ell_{p}$ has $(1,C)$-extension property.
\end{theorem}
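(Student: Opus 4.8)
The plan is to recast the extension problem as one about \emph{weak*-continuous selections} of a restriction map, and then to build such a selection out of the minimal-norm lifting, exploiting the strict convexity and smoothness of $\ell_p$ and $\ell_q$ together with the coordinatewise description of the weak topology on $\ell_q$. If $p=\infty$ there is nothing to prove, since $\ell_\infty$ is $1$-injective (Theorem \ref{t2}); so assume $1<p<\infty$ and put $q=p/(p-1)$.

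First I would reformulate the statement. Recall the standard correspondence: for a Banach space $Z$ and $K$ compact, a norm-$\le 1$ operator $T:Z\to C(K)$ is the same datum as a weak*-continuous map $\tau:K\to(B_{Z^*},w^*)$, via $(Tz)(k)=\langle z,\tau(k)\rangle$, with $\|T\|=\sup_k\|\tau(k)\|$. Taking $K=[0,1]$, a map $g:Y\to C[0,1]$, which we may normalize to have $\|g\|\le1$, is encoded by a weak*-continuous $\psi:[0,1]\to B_{Y^*}$; to extend it to $f:\ell_p\to C[0,1]$ with $\|f\|=\|g\|$ it suffices to produce a map $\sigma:B_{Y^*}\to B_{(\ell_p)^*}$ which is weak*-to-weak* continuous, satisfies $\|\sigma\xi\|=\|\xi\|$, and is a section of the (onto, by Hahn-Banach) restriction map $R:B_{(\ell_p)^*}\to B_{Y^*}$, i.e.\ $R\circ\sigma=\mathrm{id}$. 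Then $\phi:=\sigma\circ\psi$ encodes the desired $f$, with $\|f\|=\sup_k\|\psi(k)\|=\|g\|$. Since $\ell_p$ is separable and reflexive, all the balls appearing here are weak*-metrizable and the weak* and weak topologies agree on them, so it will be enough to produce $\sigma$ and verify it is \emph{sequentially} weak-to-weak continuous.

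Next I would exhibit $\sigma$. Identify $(\ell_p)^*=\ell_q$ and $Y^*=\ell_q/Y^\perp$. As $1<q<\infty$, $\ell_q$ is strictly convex, so each fibre $R^{-1}(\xi)$ (an affine translate of $Y^\perp$) contains a \emph{unique} point of least norm; call it $\sigma(\xi)$, and set $\sigma(0)=0$. By the definition of the quotient norm $\|\sigma\xi\|=\|\xi\|$, trivially $R\circ\sigma=\mathrm{id}$, and $\sigma(t\xi)=t\,\sigma(\xi)$ for $t\ge0$. Smoothness of $\ell_q$ yields the variational characterization I will actually use: writing $J=J_{\ell_q}:\ell_q\to\ell_p$ for the duality map $J(z)_i=\operatorname{sgn}(z_i)|z_i|^{q-1}/\|z\|_q^{q-2}$ (so $\|J(z)\|_p=\|z\|_q$ and $\langle J(z),z\rangle=\|z\|_q^2$), the point $\sigma(\xi)$ is, for $\xi\ne0$, the unique $z\in R^{-1}(\xi)$ with $J(z)\in Y$ — the pre-annihilator in $\ell_p$ of $Y^\perp\subseteq\ell_q$ being $Y$.

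The hard part is the continuity of $\sigma$, and this is precisely where $p\ne1$ enters. Norm-to-norm continuity of $\sigma$ is cheap (it is built from duality maps, which are uniformly continuous on bounded sets since $\ell_p,\ell_q$ are uniformly convex and smooth), but that is useless here, because $\psi$ is only weak*-continuous; and the duality map of $\ell_q$ is, for $q\ne2$, genuinely not weak-to-weak continuous. To get around this I would argue by subsequences. Let $\xi_n\to\xi$ weakly in $B_{Y^*}$; it suffices that every subsequence have a further subsequence along which $\sigma(\xi_n)\to\sigma(\xi)$ weakly. Pass to a subsequence with $\|\xi_n\|=\|\sigma(\xi_n)\|\to c$ (so $c\ge\|\xi\|$), with $\sigma(\xi_n)\to v$ weakly in $\ell_q$, and with $J(\sigma(\xi_n))\to u$ weakly in $\ell_p$ (all possible by reflexivity). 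Then $R(v)=\xi$; each $J(\sigma(\xi_n))$ lies in the weakly closed set $Y$, so $u\in Y$; and comparing coordinatewise limits (weak convergence in $\ell_p$, resp.\ $\ell_q$, is bounded coordinatewise convergence) gives, when $c>0$, $u_i=\operatorname{sgn}(v_i)|v_i|^{q-1}/c^{q-2}$, i.e.\ $u=(\|v\|_q/c)^{q-2}J(v)$ once $v\ne0$. Hence $J(v)$ is a positive scalar multiple of $u\in Y$, so $J(v)\in Y$; with $R(v)=\xi$ the uniqueness above forces $v=\sigma(\xi)$. (If $\xi=0$, or $v=0$, one gets $v=0=\sigma(\xi)$ directly from $R(v)=\xi$ together with $\langle J(v),v\rangle=\|v\|_q^2$.) This proves $\sigma$ continuous and closes the argument. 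It is worth noting that the whole mechanism breaks for $p=1$, where $(\ell_1)^*=\ell_\infty$ is neither reflexive nor strictly convex — consistent with the fact that for $c_0$ only the approximate statement $(1+\varepsilon,C)$ of Theorem \ref{t6} is available.
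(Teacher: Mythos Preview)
The paper does not supply its own proof of this theorem --- it is stated with attribution to Zippin and left unproved, as with most of the cited results in this survey. So there is no in-paper argument to compare against.

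Your argument is correct and is in fact close in spirit to Zippin's original approach: encode $C(K)$-valued operators as weak*-continuous maps into dual balls, and reduce the extension problem to producing a weak*-to-weak* continuous section of the restriction map $R:B_{\ell_q}\to B_{Y^*}$. Taking the minimal-norm lifting for $\sigma$ is exactly right; its well-definedness (strict convexity of $\ell_q$), norm preservation (reflexivity, so the quotient-norm infimum is attained), and the variational characterisation $J(\sigma\xi)\in Y$ (smoothness of $\ell_q$) are all standard. The subsequence argument for weak continuity is the substantive step and is carried out correctly: coordinatewise convergence together with continuity of $t\mapsto\operatorname{sgn}(t)|t|^{q-1}$ pins down $u$ in terms of $v$, forcing $J(v)\in Y$ and hence $v=\sigma(\xi)$ by uniqueness. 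For the edge cases you gloss over, it may help to spell out that when $\xi=0$ and $c>0$ one has $v\in Y^\perp$ and $\langle u,v\rangle=\|v\|_q^{\,q}/c^{\,q-2}$ with $u\in Y$, which forces $v=0$; and when $c=0$ convergence is already in norm. Your closing remark about $p=1$ is apt: it is precisely the failure of strict convexity and reflexivity of $\ell_\infty=(\ell_1)^*$ that blocks both the definition of $\sigma$ and the compactness used in the continuity step.
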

The case p=1 is covered in
\begin{theorem}[Johnson-Zippin \cite{jz}]\label{t8}
The pair $(E,\ell_{1})$ has $(3+\epsilon,C)$-extension property for each $\epsilon>0$ and
 for each subspace E of $\ell_{1}$ which is $w^{\ast}$-closed.
\end{theorem}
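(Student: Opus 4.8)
The plan is to use what weak\textsuperscript{$*$}-closedness really gives — namely that a weak\textsuperscript{$*$}-closed $E\subseteq\ell_{1}=c_{0}^{*}$ is an annihilator, $E=F^{\perp}$ for a norm-closed $F\subseteq c_{0}$ — to convert the extension problem into a lifting problem controlled by the geometry of subspaces of $c_{0}$, and to pay for that conversion with an absolute constant. Under the canonical identifications the quotient map $q\colon\ell_{1}\to\ell_{1}/E$ becomes the adjoint of the inclusion $F\hookrightarrow c_{0}$; in particular $\ell_{1}/E$ is isometric to $F^{*}$ (and $E$ is isometric to $(c_{0}/F)^{*}$). This annihilator representation is exactly the feature that fails for general closed subspaces of $\ell_{1}$, which is why the hypothesis cannot be dropped. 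By the Banach--Mazur embedding together with Milutin's theorem, as recalled in Remark~\ref{t5}(a), I would also reduce the range once and for all to the single space $C[0,1]$.

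Next I would produce a free but non-optimal extension and then correct it. Since $C[0,1]^{**}$ is isometric to $C(\widehat K)$ with $\widehat K$ extremally disconnected, it is $1$-injective by Theorem~\ref{t3}; composing $g$ with the canonical isometry $C[0,1]\hookrightarrow C[0,1]^{**}$ and extending by injectivity yields $\widetilde g\colon\ell_{1}\to C[0,1]^{**}$ with $\widetilde g|_{E}=g$ and $\|\widetilde g\|=\|g\|$. The only defect is that $\widetilde g$ need not be $C[0,1]$-valued, and that defect is small: writing $\pi\colon C[0,1]^{**}\to C[0,1]^{**}/C[0,1]$ for the quotient map, $\pi\circ\widetilde g$ vanishes on $E$ (because $g$ already maps into $C[0,1]$), hence factors as $\pi\circ\widetilde g=\bar h\circ q$ for some $\bar h\colon\ell_{1}/E\cong F^{*}\to C[0,1]^{**}/C[0,1]$ with $\|\bar h\|\le\|g\|$.

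The constant now appears cleanly: if one can lift $\bar h$ through $\pi$ to an operator $h\colon F^{*}\to C[0,1]^{**}$ with $\|h\|\le(2+\epsilon)\|\bar h\|$, then $f:=\widetilde g-h\circ q$ maps $\ell_{1}$ into $C[0,1]$ (since $\pi\circ f=\pi\widetilde g-\bar h q=0$), restricts to $g$ on $E$ (since $q|_{E}=0$), and satisfies $\|f\|\le\|\widetilde g\|+\|h\|\,\|q\|\le\|g\|+(2+\epsilon)\|g\|=(3+\epsilon)\|g\|$, which is precisely the asserted bound.

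Thus the whole problem is reduced to that lifting with loss at most $2+\epsilon$, and this is where one must genuinely use that $F$ is a subspace of $c_{0}$ and not merely a separable space — indeed the constant $2$ and the method are exactly those of Sobczyk's theorem (Theorem~\ref{t4}). Since $E$, hence $F$, is separable, I would show (after an arbitrarily small perturbation obtained by a gliding-hump/blocking argument inside $c_{0}$) that $F$ carries a finite-dimensional decomposition supported on consecutive coordinate blocks; on each finitely supported block the corresponding piece of $\bar h$ lifts with negligible loss, and the countably many block-lifts are amalgamated by a Sobczyk-type telescoping whose cost is the familiar factor $2$, with the perturbation and the $\mathcal L_{\infty,1+\epsilon}$-constant of $C[0,1]$ absorbed into $\epsilon$. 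I expect this blocking-and-telescoping step — establishing the lemma on how an arbitrary closed subspace of $c_{0}$ sits, up to $\epsilon$, relative to a blocking of the unit-vector basis, and keeping the amalgamation constant at $2+\epsilon$ — to be the main obstacle; everything preceding it (the annihilator reduction and the bidual-correction trick) is soft. If the telescoping proves stubborn, the fallback is to run the argument first for $E=F^{\perp}$ with $F$ the closed span of a block basis, where $\ell_{1}$, $E$ and $F^{*}$ all split as $\ell_{1}$-sums of finite-dimensional spaces and $f$ can be built block by block explicitly, and then to deduce the general case by perturbation.
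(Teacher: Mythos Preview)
The paper does not prove Theorem~\ref{t8}; it is stated with attribution to Johnson and Zippin \cite{jz} and followed immediately by Remark~\ref{r6} recording Kalton's sharpening of the constant to $1+\epsilon$ and the failure of the result for norm-closed subspaces. There is thus no argument in the paper against which to compare your proposal.

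On the substance of your outline: the annihilator identification $E=F^{\perp}$ with $F\subset c_0$, the isometry $\ell_1/E\cong F^{*}$, and the bidual-correction scheme (extend $g$ into the injective space $C(K)^{**}$ and then subtract a correction $h\circ q$) are all correct and standard, and you have rightly isolated the real content as the lifting of $\bar h\colon F^{*}\to C(K)^{**}/C(K)$ through $\pi$ with constant at most $2+\epsilon$. That step, however, is where your write-up becomes a sketch rather than a proof. Labelling it ``a Sobczyk-type telescoping whose cost is the familiar factor $2$'' does not carry the weight: Sobczyk's theorem (Theorem~\ref{t4}) extends operators \emph{into} $c_0$ from a separable superspace via a weak$^{*}$-cluster-point argument, whereas here you must lift operators \emph{from} $F^{*}$ through a quotient of a large bidual, and nothing in Sobczyk's proof transfers mechanically to that situation. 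Your fallback plan---first handle the case where $F$ is the closed span of a block basis of $c_0$, build the extension block by block, and then perturb an arbitrary $F\subset c_0$ to that situation---is in fact much closer to how the Johnson--Zippin argument actually runs, and I would promote it to the main line rather than leave it as a contingency. In any event the constant $2+\epsilon$ for the lift (hence $3+\epsilon$ overall) has to be earned by an explicit construction using the blocked finite-dimensional decomposition of $F$ together with the $\mathcal L_{\infty}$-local structure of $C(K)$; that construction \emph{is} the theorem, and it needs to be written out rather than named.
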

\begin{remark}\label{r6}
\begin{enumerate}
\item[]
\item[(a).] A far reaching improvement of Theorem \ref{t8} was proved by Kalton \cite{kn1}, replacing $3+\epsilon ~ by ~1+\epsilon$.
\item[(b).] Theorem \ref{t8} does not hold for norm-closed subspaces of $\ell_{1}.$
\end{enumerate}
\end{remark}
A number of open problems belonging to the domain of operator analogue of Hahn Banach theorem continue to be 
unresolved since they were posed several years ago. We isolate but two of them in view of their relevance to the theme of this paper.
\begin{problem}\label{pb1}
Given a reflexive Banach space $X$ and a subspace $Y$ of $X$, does the pair $(Y,X)$ have $(1,C)$-EP?
\end{problem}
(Compare with Theorem \ref{t7}. The problem is open even for $L_{p}$ spaces, $1<p<\infty$. However, it is known that for finite 
dimensional $Y\subset L_{1}, (Y,L_{1})~ has~ (1,C)-EP$).\\
Following is a celebrated open problem belonging to this circle of ideas.
\begin{problem}\label{pb2}
Is it true that all $\lambda$-injective spaces are isomorphic with a C(K) space for K,
 a compact Hausdorff extremally disconnected space.\\
 \textnormal{ (It is conjectured that the answer is in the affirmative).}
\end{problem}
\subsection{Uniqueness issues involving HB-theorem}\label{s3.2}
\hspace*{\fill} \\
It is natural to ask if there are situations when the HB-extension may be unique. Whereas the answer is easily seen to be in the 
affirmative for Hilbert spaces, a complete characterisation guaranteeing uniqueness of HB-extensions is provided by the following theorem:
\begin{theorem}[\cite{tr}, see also \cite{fl}]\label{tc14}
Given a normed space X, then every continuous linear functional on every subspace 
of X admits a unique norm-preserving HB-extension to $X$ if and only if $X^{\ast}$ is strictly convex.\\
\textnormal{ (A strictly convex space is one whose unit sphere does not contain nontrivial flat segments).}
\end{theorem}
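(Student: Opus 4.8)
The plan is to prove the contrapositive in each direction, so that the two implications become: (a) if $X^*$ is \emph{not} strictly convex, exhibit a subspace and a functional on it with two distinct norm-preserving extensions; (b) if $X^*$ \emph{is} strictly convex, show any functional on any subspace has at most one norm-preserving extension (existence is the classical Hahn–Banach theorem, so only uniqueness is at issue).

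For direction (a), suppose $X^*$ fails to be strictly convex, so there exist distinct $f_1, f_2 \in X^*$ with $\|f_1\| = \|f_2\| = 1$ and $\|f_1 + f_2\| = 2$; normalizing, the segment $[f_1, f_2]$ lies on the unit sphere of $X^*$. I would set $Y = \ker(f_1 - f_2)$, which has codimension one in $X$ (the functionals are distinct, so $f_1 - f_2 \neq 0$), and let $g = f_1|_Y = f_2|_Y$. The key point is that $g$ already has norm $1$ on $Y$: indeed, on $Y$ the functionals $f_1$ and $f_2$ agree, and for $0 \le t \le 1$ one has $f_1|_Y = (t f_1 + (1-t) f_2)|_Y$, a functional of norm $\le 1$; one must check $\|g\|_Y = 1$, which follows because if $\|g\|_Y < 1$ then $\|f_i\| < 1$ would follow on a codimension-one subspace — here a short argument using that $f_1, f_2$ restricted to a common complement line can be adjusted, or more cleanly: $\|\tfrac12(f_1+f_2)\|=1$ forces, for any $\varepsilon>0$, a unit vector $x$ with $|f_1(x)+f_2(x)| > 2-\varepsilon$, hence both $|f_i(x)| > 1-\varepsilon$ and $(f_1-f_2)(x)$ small, so $x$ is nearly in $Y$; a normalization of the $Y$-component then witnesses $\|g\|_Y = 1$. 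Granting this, $f_1$ and $f_2$ are two \emph{distinct} norm-one (hence norm-preserving) extensions of $g$, contradicting uniqueness.

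For direction (b), assume $X^*$ is strictly convex and suppose $g \in Y^*$ with $\|g\| = 1$ has two norm-preserving extensions $f_1, f_2 \in X^*$, both of norm $1$. Their average $f = \tfrac12(f_1 + f_2)$ is again an extension of $g$ (linear combinations of extensions restrict correctly on $Y$), and $\|f\| \le 1$; but $\|f\| \ge \|f|_Y\| = \|g\| = 1$, so $\|f\| = 1$. Thus $\|f_1\| = \|f_2\| = \|\tfrac12(f_1+f_2)\| = 1$ with $f_1, f_2$ on the unit sphere, which by strict convexity of $X^*$ forces $f_1 = f_2$. Hence the extension is unique.

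The main obstacle is the verification in direction (a) that the restricted functional $g$ has norm exactly $1$ on the codimension-one subspace $Y = \ker(f_1 - f_2)$ — i.e., that passing to $Y$ does not shrink the norm. This is where the hypothesis $\|f_1 + f_2\| = 2$ (equivalently, that a whole segment sits on the sphere of $X^*$) is used in an essential way: it guarantees near-norming unit vectors that are asymptotically annihilated by $f_1 - f_2$, so they can be used (after a small perturbation into $Y$ and renormalization) to norm $g$. Once this normalization lemma is in hand, both implications are short; direction (b) is essentially immediate from the definition of strict convexity applied to the dual space.
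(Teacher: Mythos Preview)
The paper does not supply its own proof of this theorem; it is stated with attribution to Taylor \cite{tr} and Foguel \cite{fl} and then used as a tool, so there is nothing in the paper to compare your argument against directly.

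Your proof is correct. Direction (b) is the standard midpoint argument and is complete as written. In direction (a) you have correctly isolated the only nontrivial point, namely that the restriction $g=f_1|_Y=f_2|_Y$ to $Y=\ker(f_1-f_2)$ still has norm $1$, and your approximation sketch (near-norming vectors for $\tfrac12(f_1+f_2)$ are nearly in $Y$, then perturb and renormalise) does go through once one fixes a vector $z$ with $(f_1-f_2)(z)=1$ and controls the constants. A slicker route to the same conclusion, which avoids the $\varepsilon$-chasing, is this: since $Y$ has codimension one and $Y^{\perp}$ is spanned by $f_1-f_2$, the set of \emph{all} bounded extensions of $g$ to $X$ is exactly the affine line $\{tf_1+(1-t)f_2:t\in\mathbb{R}\}$, and $\|g\|$ equals the infimum of the norm along this line. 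The map $t\mapsto\|tf_1+(1-t)f_2\|$ is convex and equals $1$ on $[0,1]$ by hypothesis; a convex function constant on a nondegenerate interval is bounded below by that constant everywhere, so the infimum is $1$. This gives $\|g\|=1$ immediately and makes the use of the full-segment hypothesis $\|\tfrac12(f_1+f_2)\|=1$ transparent.
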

The issue involving characterization of subspaces admitting unique HB-extensions has been addressed in a recent work of P. Bandyopadhyay and A.K. Roy (\cite{br1}) in terms of nested sequences of balls. The ‘invariant’ analogue of this uniqueness result has been treated in a subsequent work \cite{br2} by the same authors.\\
\subsection{Bilinear (multilinear) versions of HB-theorem}\label{s3.3}
\hspace*{\fill} \\
The bilinear version theorem of HB-theorem does not always hold: consider the scalar product on $\ell_{2}$ which cannot be
 extended to a bilinear functional on $\ell_{\infty}$. This is so because bilinear forms on $\ell_{\infty}$ are weakly sequentially 
continuous (because $\ell_{\infty}$ has the Dunford-Pettis property) whereas the inner product is not.  
It is also known that there are closed subspaces $Y$ of $c_{0}$ (and of $\ell_{1}$) and bilinear forms on $Y\times Y$ which cannot be 
extended to bilinear forms on the whole space. The search for Banach spaces $X$ such that for each subspace $Y$ of $X$, every bilinear form
 on $Y\times Y$ can be extended to a bilinear form on $X\times X$, is a celebrated open problem belonging to this circle of ideas. 
Banach spaces $X$ with this property shall be designated to have the bilinear extension property for subspaces (BEPS). 
The dual problem involving the extension of bilinear forms on $X\times X$ to $Z\times Z$ where $Z$ is a superspace of $X$-bilinear 
extension property for containing spaces (BEPC)-is also open despite extensive work that is going on in this area right now. 
The following theorem gives a complete characterisation of bilinear forms on Banach spaces which can be extended to superspaces. 
Here the symbol $\tau_{B}$ denotes the map $\tau_B:X\rightarrow X^{\ast}$ defined by $\tau_B (x)(y)=B(x,y)$ associated to a given 
bilinear form B on $X\times X$.
\begin{theorem}\label{t9}
For a bilinear form B on a Banach space X, TFAE:
\begin{enumerate}
\item[(i)]	B can be extended to a bilinear form on any superspace of $X$.
\item[(ii)]  $\tau_{B}$ is $2$-dominated: $\exists$ a Hilbert space H and\\
  $u\in \Pi_2 (X,H),v\in\Pi_2^d (H,X^* )  \,\,\,  s.t.\tau_B=\nu ou$.
\item[(iii)]  $\exists$ a Hilbert space H and $u,v\in\Pi_2 (X,H)$ such that
 $$B(x,y)=\langle u(x),v(y) \rangle,x,y \in X.$$
\end{enumerate}
\end{theorem}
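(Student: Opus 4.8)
The plan is to prove the chain of equivalences $(iii)\Rightarrow(ii)\Rightarrow(i)\Rightarrow(iii)$, exploiting the close relationship between $2$-summing maps, factorization through Hilbert spaces, and the extension theory for bilinear forms. The central technical input is Grothendieck's theorem / the Pietsch factorization theorem for $2$-summing operators, together with the fact that $2$-summing maps into a Hilbert space always extend (since Hilbert spaces are injective objects for the $2$-summing ideal in a suitable sense — any $2$-summing $u\colon X\to H$ factors through a subspace of an $L_2$-space and hence extends).

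First I would establish $(iii)\Rightarrow(ii)$, which is essentially bookkeeping: if $B(x,y)=\langle u(x),v(y)\rangle$ with $u,v\in\Pi_2(X,H)$, then writing $\tau_B(x)=v^\ast\circ\iota_H\circ u(x)$ where $\iota_H\colon H\to H^\ast$ is the canonical identification and $v^\ast\colon H^\ast\to X^\ast$ is the adjoint, one checks that $v^\ast\colon H\to X^\ast$ is $2$-summing in the dual sense (i.e. lies in $\Pi_2^d(H,X^\ast)$) precisely because $v\in\Pi_2(X,H)$; so $\tau_B$ factors as a $2$-summing map followed by a cotype-$2$/dual-$2$-summing map, which is the definition of being $2$-dominated. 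The reverse passage $(ii)\Rightarrow(iii)$ unwinds the factorization $\tau_B=v\circ u$ with $u\in\Pi_2(X,H)$, $v\in\Pi_2^d(H,X^\ast)$, using that a dual-$2$-summing map out of a Hilbert space is itself (the adjoint of) a $2$-summing map into $H$, so that $B(x,y)=\tau_B(x)(y)=\langle u(x),w(y)\rangle$ for a suitable $w\in\Pi_2(X,H)$.

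The substantive implication is $(ii)\Rightarrow(i)$. Given a superspace $Z\supset X$ and the factorization $\tau_B=v\circ u$ with $u\in\Pi_2(X,H)$, I would extend $u$ to a $2$-summing map $\tilde u\colon Z\to H$: this is possible because $u$ admits, via Pietsch factorization, a representation $u=a\circ j$ where $j\colon X\hookrightarrow C(K)$ is an isometric embedding into a $C(K)$-space (with $K=B_{X^\ast}$ in the weak$^\ast$ topology) composed with multiplication into $L_2(\mu)$, and then one uses injectivity of $C(K)$ (Theorem \ref{t3}) — more precisely one extends the $C(K)$-valued embedding of $X$ to $Z$ — followed by the same multiplication operator, to produce $\tilde u$. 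Similarly one extends $v$ using that its domain $H$ is already the whole Hilbert space (no extension needed there) but one must extend the target side; the composition $\tilde v\circ\tilde u\colon Z\to Z^\ast$ then restricts correctly on $X$, except that one needs the range in $Z^\ast$ rather than $X^\ast$. Handling that last point is where I expect the main obstacle: one must show the extended bilinear form $\tilde B(z,z')=(\tilde v\tilde u(z))(z')$ genuinely restricts to $B$ on $X\times X$, which forces a careful choice so that the $C(K)$-extension of the embedding $X\hookrightarrow C(K)$ is compatible with evaluating against extended functionals — this is a standard but delicate diagram chase involving the adjoint of the $C(K)$-extension.

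For $(i)\Rightarrow(iii)$ I would embed $X$ isometrically into an injective space, say $Z=\ell_\infty(\Gamma)=C(\beta\Gamma)$, extend $B$ to $\tilde B$ on $Z\times Z$ by hypothesis, and then use that on an $\mathcal{L}_\infty$-space every bilinear form is $2$-dominated — this is a consequence of Grothendieck's inequality, which says precisely that bounded bilinear forms on $C(K)\times C(K)$ factor through a Hilbert space in the manner of $(iii)$. Restricting the resulting factorization $\tilde B(z,z')=\langle \tilde u(z),\tilde v(z')\rangle$ back to $X$ gives $(iii)$ with $u=\tilde u|_X$, $v=\tilde v|_X$, both of which remain $2$-summing since the restriction of a $2$-summing map is $2$-summing. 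The only care needed is to cite Grothendieck's theorem in its bilinear/operator form and to note that $2$-summing-ness is inherited by restriction to subspaces, which is immediate from the defining inequality $(*)$ in Notation \ref{n1}.
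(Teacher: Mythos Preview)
The paper states Theorem \ref{t9} without proof; it is quoted as a known characterisation of extendible bilinear forms, so there is no argument in the paper to compare against. Your proposal follows the standard route in the literature (embed into an $\mathcal L_\infty$-space and invoke Grothendieck's theorem for one direction; extend the $2$-summing factors for the other), and the overall strategy is correct.

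Two points deserve tightening. First, the detour in your $(ii)\Rightarrow(i)$ through ``range in $Z^\ast$ rather than $X^\ast$'' is an artificial difficulty. It is cleaner to prove $(iii)\Rightarrow(i)$ directly: given $B(x,y)=\langle u(x),v(y)\rangle$ with $u,v\in\Pi_2(X,H)$, extend \emph{both} $u$ and $v$ to $2$-summing maps $\tilde u,\tilde v\colon Z\to H$ (this is the fact recorded in the paper as item (b) after Theorem \ref{t11}) and set $\tilde B(z,z')=\langle\tilde u(z),\tilde v(z')\rangle$. No adjoint-chasing or target-space mismatch arises. Second, your sketch of \emph{why} $2$-summing maps extend is slightly off: the space $C(B_{X^\ast})$ is not in general injective, so one cannot simply ``extend the $C(K)$-valued embedding of $X$ to $Z$'' by Theorem \ref{t3}. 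The correct mechanism is either to lift the Pietsch measure from $B_{X^\ast}$ to $B_{Z^\ast}$ along the weak$^\ast$-continuous surjection $j^\ast\colon B_{Z^\ast}\to B_{X^\ast}$ and then use orthogonal projection in the resulting $L_2$, or to factor through an $L_\infty(\mu)$ (which \emph{is} injective) rather than $C(K)$. This is a technical correction only; the conclusion you use is valid.

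Your $(i)\Rightarrow(iii)$ via embedding into $\ell_\infty(\Gamma)$ and applying Grothendieck's inequality is clean and is exactly the intended argument.
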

As a consequence, it follows that for an extendible bilinear form B, the associated map $\tau_B$ is always 2-summing. 
This also provides an alternative argument to the assertion that the scalar product on $\ell_2$ does not extend to some 
superspace of $\ell_2$. On the other hand, there are examples of bilinear forms B such that $\tau_B$ is 2-summing but for 
which B is not extendible. However, such examples cannot occur amongst Hilbert Schmidt spaces as the following simple proposition shows.
(Compare with Theorem \ref{t17a}).
\begin{proposition}\label{p2}
\begin{enumerate}
\item[]
\item[(a)] Let $X$ be a Hilbert Schmidt space. Then each bilinear form B on $X$ for which $\tau_B$ is $2$-summing is extendible.
\item[(b)] A Banach space X having (BEPC) is a Hilbert Schmidt space.
\end{enumerate}
\end{proposition}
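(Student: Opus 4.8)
The plan is to reduce both parts to the characterisation of extendible bilinear forms in Theorem \ref{t9}, using throughout that a Banach space $X$ is a Hilbert--Schmidt space if and only if every bounded operator from $X$ into a Hilbert space is $2$-summing; for the verification it is enough to consider operators into $\ell_2$, so the property is equivalent to $L(X,\ell_2)=\Pi_2(X,\ell_2)$.

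\emph{Part (a).} Let $B$ be a bilinear form on $X$ with $\tau_B\in\Pi_2(X,X^\ast)$. By the Pietsch factorisation of $2$-summing operators there are a Hilbert space $H$, a map $u\in\Pi_2(X,H)$ and a \emph{merely bounded} operator $R\in L(H,X^\ast)$ with $\tau_B=R\circ u$. Unwinding $\tau_B(x)(y)=B(x,y)$ and identifying $H$ with its dual, one gets $B(x,y)=\langle u(x),v(y)\rangle_H$ for all $x,y\in X$, where $v:=R^\ast\circ\kappa_X\colon X\to H$ and $\kappa_X\colon X\to X^{\ast\ast}$ is the canonical embedding. Here $v$ is manifestly a bounded operator into a Hilbert space, so the hypothesis that $X$ is Hilbert--Schmidt forces $v\in\Pi_2(X,H)$. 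Thus $B$ has the representation required by condition (iii) of Theorem \ref{t9}, hence also (i): $B$ is extendible. The only delicate point is checking that the ``second leg'' $v$ produced by the factorisation really is a bounded Hilbert-space-valued operator, so that the Hilbert--Schmidt property can be applied to it; the rest is bookkeeping. In effect, the Hilbert--Schmidt hypothesis is used exactly to bridge the gap between ``$\tau_B$ is $2$-summing'' and ``$\tau_B$ is $2$-dominated''.

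\emph{Part (b).} Assume $X$ has (BEPC) and fix an arbitrary $T\in L(X,\ell_2)$; the goal is to show $T\in\Pi_2(X,\ell_2)$. The key step is to feed the right bilinear form into Theorem \ref{t9}, namely $B(x,y)=\langle Tx,Ty\rangle_{\ell_2}$, which is bounded since $|B(x,y)|\le\|T\|^2\|x\|\,\|y\|$. As $X$ has (BEPC), $B$ extends to every superspace of $X$, so Theorem \ref{t9}(iii) yields a Hilbert space $H$ and $\phi,\psi\in\Pi_2(X,H)$ with $B(x,y)=\langle\phi(x),\psi(y)\rangle_H$. Then, for every finite family $(x_i)_{i=1}^n\subset X$, two applications of the Cauchy--Schwarz inequality together with the $2$-summing estimates for $\phi$ and $\psi$ give
$$\sum_{i=1}^n\|Tx_i\|^2=\sum_{i=1}^n\langle\phi(x_i),\psi(x_i)\rangle\le\pi_2(\phi)\,\pi_2(\psi)\,\sup_{f\in B_{X^\ast}}\sum_{i=1}^n|\langle x_i,f\rangle|^2,$$
which, upon taking square roots, is precisely the finitary criterion $(*)$ certifying that $T\in\Pi_2(X,\ell_2)$. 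Since $T$ was arbitrary, $L(X,\ell_2)=\Pi_2(X,\ell_2)$, i.e. $X$ is a Hilbert--Schmidt space. The mild obstacle here is simply hitting on the test form $B(x,y)=\langle Tx,Ty\rangle$; once it is chosen, the estimate is a one-line computation.
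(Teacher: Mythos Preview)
Your proof is correct and follows essentially the same route as the paper's. For part (b) the argument is identical: test the hypothesis on the bilinear form $B(x,y)=\langle Tx,Ty\rangle$, invoke Theorem~\ref{t9}(iii), and use Cauchy--Schwarz on the resulting $2$-summing pair. For part (a) the paper also starts from the Pietsch factorisation and Theorem~\ref{t10}; the only cosmetic difference is that the paper alludes to the self-duality of the Hilbert--Schmidt property (presumably to verify condition (ii) of Theorem~\ref{t9} directly, showing the second factor lies in $\Pi_2^d$), whereas you verify condition (iii) by observing that $v=R^\ast\circ\kappa_X$ is a bounded Hilbert-space-valued operator on $X$ and hence $2$-summing by the Hilbert--Schmidt hypothesis on $X$ itself. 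Your route is slightly more explicit and in fact sidesteps the need to invoke self-duality.
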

Here, by a Hilbert Schmidt space we mean a Banach space $X$ with the property that a bounded linear map acting between 
Hilbert spaces and factoring over $X$ is already a Hilbert-Schmidt map. A useful characterisation of such spaces is given 
by the following theorem which is a consequence of the well known fact that 2-summing maps between Hilbert spaces coincides 
with Hilbert Schmidt maps.
\begin{theorem}\label{t10}
For a Banach space $X$, the following are equivalent:
\begin{enumerate}
\item[(i)]	$X$ is a Hilbert-Schmidt space
\item[(ii)] $L(X,\ell_2)=\Pi_2 (X,\ell_2  ).$
\end{enumerate}
\end{theorem}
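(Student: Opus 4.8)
The plan is to establish the two implications of Theorem~\ref{t10} separately, in each case leaning on the quoted fact that an operator between Hilbert spaces is $2$-summing if and only if it is Hilbert--Schmidt (with equality of the two norms).

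\emph{Proof of (ii) $\Rightarrow$ (i).} I would first turn the bare equality $L(X,\ell_2)=\Pi_2(X,\ell_2)$ into a quantitative statement. Both $\bigl(\Pi_2(X,\ell_2),\pi_2\bigr)$ and $\bigl(L(X,\ell_2),\|\cdot\|\bigr)$ are Banach spaces, the formal identity from the first to the second is continuous (always $\|u\|\le\pi_2(u)$) and, by hypothesis, bijective; so the open mapping theorem yields a constant $C$ with $\pi_2(u)\le C\|u\|$ for all $u\in L(X,\ell_2)$. Now take any bounded $T\colon H_1\to H_2$ between Hilbert spaces that factors through $X$, say $T=\beta\circ\alpha$ with $\alpha\in L(H_1,X)$, $\beta\in L(X,H_2)$. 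Any finite family $\beta x_1,\dots,\beta x_m$ spans a finite-dimensional subspace of $H_2$, which is isometric to a subspace of $\ell_2$; composing $\beta$ with the orthogonal projection onto it and applying the bound above, one gets $\pi_2(\beta)\le C\|\beta\|$ from the finitary form of the $2$-summing inequality. Hence $\pi_2(T)\le\pi_2(\beta)\,\|\alpha\|\le C\,\|\beta\|\,\|\alpha\|<\infty$ by the ideal property of $\Pi_2$, so $T$ is $2$-summing and therefore Hilbert--Schmidt; this says exactly that $X$ is a Hilbert--Schmidt space.

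\emph{Proof of (i) $\Rightarrow$ (ii).} Here I would argue by contradiction. Assume $X$ is a Hilbert--Schmidt space and that some $u\in L(X,\ell_2)$ is \emph{not} $2$-summing, so the finitary inequality $(*)$ fails for every constant. Then for each $n\ge1$ there is a finite family $x^{(n)}_1,\dots,x^{(n)}_{N_n}$ in $X$ which, after rescaling, satisfies $\sup_{f\in B_{X^*}}\bigl(\sum_{i}|\langle x^{(n)}_i,f\rangle|^2\bigr)^{1/2}=1$ and $\sum_i\|u x^{(n)}_i\|^2>n^4$. The first condition says precisely that the operator $v_n\colon\ell_2^{N_n}\to X$ sending the $i$-th unit vector to $x^{(n)}_i$ has norm at most $1$. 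I would then assemble these into a single operator: on the Hilbert space $H=\bigoplus_{n\ge1}\ell_2^{N_n}\cong\ell_2$ define $v\bigl((h_n)_n\bigr)=\sum_{n\ge1}n^{-1}v_n(h_n)$, which by Cauchy--Schwarz satisfies $\|v\|\le\pi/\sqrt{6}$, so $v\in L(H,X)$. Then $uv\colon H\to\ell_2$ factors through $X$, hence is Hilbert--Schmidt by the hypothesis; but on the natural orthonormal basis of $H$ one computes $\|uv\|_{HS}^2=\sum_{n}n^{-2}\sum_i\|u x^{(n)}_i\|^2>\sum_n n^2=\infty$, which is absurd. So every $u\in L(X,\ell_2)$ is $2$-summing, i.e. $L(X,\ell_2)=\Pi_2(X,\ell_2)$.

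\emph{Main obstacle.} Once the closed-graph normalisation is in hand, (ii) $\Rightarrow$ (i) is essentially formal; the substance of the argument is the implication (i) $\Rightarrow$ (ii), and within it the amplification step. The difficulty is conceptual as much as computational: from the mere \emph{failure} of one inequality for $u$ one must produce a \emph{single} operator $v$ from $\ell_2$ into $X$ for which $uv$ escapes the Hilbert--Schmidt class, so that the hypothesis on $X$ (which only speaks about maps factoring through $X$) can be invoked. Calibrating the growth ($n^4$) against the damping ($n^{-2}$) so that $v$ remains bounded while $\|uv\|_{HS}$ diverges is the one place where care is needed, though it is routine.
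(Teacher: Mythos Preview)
Your proof is correct. The paper does not actually supply a proof of this theorem; it merely records that the equivalence ``is a consequence of the well known fact that $2$-summing maps between Hilbert spaces coincide with Hilbert--Schmidt maps,'' and you use exactly this fact at both ends of your argument. Your write-up is thus a faithful fleshing-out of the paper's one-line justification: the open-mapping step in (ii)$\Rightarrow$(i) and the gliding-hump construction of $v$ in (i)$\Rightarrow$(ii) are standard and correctly executed, with the $n^4$-vs-$n^{-2}$ calibration doing precisely what you say it does.
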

\begin{remark}\label{r7}
A Banach space X for which Theorem \ref{t10} (ii) holds with $\Pi_2$ replaced by  $\Pi_1$ is said to satisfy Grothendieck's theorem (or is called a (GT)-space).
\end{remark}
\begin{proof}[Proof of Proposition \textnormal{ \ref{p2}}]
\item[]
(a):  An application of the Pietsch factorisation theorem for 2-summing maps combined with Theorem \ref{t10} 
above and the fact that being Hilbert Schmidt is a self dual property yields the conclusion.\\
(b): Let $u:X\rightarrow H$ be a given operator. By the given hypothesis, the bilinear form B on $X$ given by $B(x,y)=\langle u(x),v(y)\rangle$ is extendible, so, by virtue of Theorem 15, we can write
$$B(x,y)=\langle u_1 (x),v_1 (y) \rangle,u_1,v_1 \in\Pi_2 (X,H),x,y\in X.$$
But then, $u=u_1,v=v_1$ on $X$. Now let, $( x_n )\in \ell_2 [X] $ and observe that
$$\sum_{n=1}^{\infty} \| u(x_n )\|^2 =\sum_{n=1}^{\infty} \langle u(x_n ),u(x_n )\rangle 
=\sum_{n=1}^{\infty}\langle u_1 (x_n ),v_1 (x_n )\rangle$$ $$\leq \Bigg(\sum_{n=1}^{\infty} \| u_1 (x_n ) \|^{2} \Bigg)^{\frac{1}{2}}
 \Bigg(\sum_{(n=1)}^{\infty} \| v_1 (x_n ) \|^{2} \Bigg)^{\frac{1}{2}}<\infty.$$
\end{proof}
For the class of Banach sequence spaces, (BEPC) has been completely characterised as the following recent theorem of Carando and Peris \cite{cs} shows.
\begin{theorem}
 Given a Banach sequence space $X$, then for every pair of  Banach space $Y$ and $Z$ containing $X$, 
every bilinear 
form on $X\times X$ can be extended to a bilinear form on $Y\times Z$ if and only if $X^{\times} = \ell_1.$\\
Here Kothe dual f $X^{\times}$ of f $X$ is the sequence space defined by 
$$X^{\times} = \{ \overline{x} = (x_n) \in \mathbb{C}^{\mathbb{N}}; \sum_{n=1}^{\infty}|x_ny_n|<\infty,\forall(y_n) \in 
X\}.$$
\end{theorem}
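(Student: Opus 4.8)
The plan is to route everything through Theorem \ref{t9}: a bounded bilinear form is extendible to all containing spaces exactly when it is $2$-dominated, equivalently when it can be written as $\langle u(\,\cdot\,),v(\,\cdot\,)\rangle$ for some Hilbert space $H$ and $u,v\in\Pi_2(\,\cdot\,,H)$. The one extra ingredient needed to handle the \emph{asymmetric} requirement (an extension on $Y\times Z$ rather than on $Z\times Z$) is that a $2$-summing operator into a Hilbert space extends, with the same $\pi_2$-norm, to a $2$-summing operator on any space containing its domain: once a form $B$ on $X\times X$ is written as $B(x,y)=\langle u(x),v(y)\rangle$ with $u,v\in\Pi_2(X,H)$, I would extend $u$ to $\widetilde u\in\Pi_2(Y,H)$ and $v$ to $\widetilde v\in\Pi_2(Z,H)$ separately and put $\widetilde B(y,z)=\langle\widetilde u(y),\widetilde v(z)\rangle$. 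So the statement reduces to showing that $X^\times=\ell_1$ holds iff every bounded bilinear form on $X\times X$ is $2$-dominated. First I would record a reformulation of the hypothesis: for a Banach sequence space $X$ one checks, using that the $e_n$ and the sections $\chi_{\{1,\dots,N\}}$ lie in $X$ together with the K\"othe duality formula $\|z\|_{X^\times}=\sup\{\sum_n|x_nz_n|:\|x\|_X\le1\}$ and the equivalence of complete norms, that
\[
X^\times=\ell_1\ \Longleftrightarrow\ X\subseteq\ell_\infty\ \text{(continuously)}\ \text{ and }\ \sup_N\|\chi_{\{1,\dots,N\}}\|_X<\infty;
\]
in that case the lattice inequality $|\sum_n a_ne_n|\le(\max_n|a_n|)\chi_{\{1,\dots,N\}}$ together with $X\subseteq\ell_\infty$ forces the sections $\mathrm{span}(e_1,\dots,e_N)$ to be uniformly isomorphic to $\ell_\infty^N$, so (after the usual perturbation step for non-finitely-supported vectors) $X$ is an $\mathcal L_\infty$-space.

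For the implication $(\Leftarrow)$ I would then argue: as $X^\times=\ell_1$ makes $X$ an $\mathcal L_\infty$-space, the Grothendieck theorem for $\mathcal L_\infty$-spaces shows that every bounded bilinear form on $X\times X$ is $2$-dominated, hence by Theorem \ref{t9} factors as $B(x,y)=\langle u(x),v(y)\rangle$ with $u,v\in\Pi_2(X,H)$; extending $u$ and $v$ separately, as above, gives an extension of $B$ to any $Y\times Z$. (This is in line with Proposition \ref{p2}: an $\mathcal L_\infty$-space is a Hilbert-Schmidt space.)

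For the implication $(\Rightarrow)$, assuming $X^\times\neq\ell_1$, I would exhibit a bounded bilinear form on $X\times X$ that is not $2$-dominated and hence, by Theorem \ref{t9}, not extendible. If $X\not\subseteq\ell_\infty$, an unbounded $x^0\in X$ gives, via a suitable multiplier $(a_n)$ of $X$ into $X^\times$, a bounded diagonal form $B(x,y)=\sum_n a_nx_ny_n$ whose operator $\tau_B$ cannot be $2$-summing. If $X\subseteq\ell_\infty$ but $\sup_N\|\chi_{\{1,\dots,N\}}\|_X=\infty$, choose $\lambda\in X^\times\setminus\ell_1$ with $\lambda\ge0$; then $B_\lambda(x,y)=\sum_n\lambda_nx_ny_n$ is bounded on $X\times X$ since $x\mapsto(\lambda_nx_n)$ maps $X$ boundedly into $X^\times$. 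If moreover $(e_n)\in\ell_2[X]$, a hypothetical factorization $B_\lambda(x,y)=\langle u(x),v(y)\rangle$ with $u,v\in\Pi_2(X,H)$ would force $\lambda_n=\langle u(e_n),v(e_n)\rangle$ and $\langle u(e_n),v(e_m)\rangle=0$ for $n\neq m$, whence
\[
\sum_{n\le N}\lambda_n\le\Big(\sum_{n\le N}\|u(e_n)\|^2\Big)^{1/2}\Big(\sum_{n\le N}\|v(e_n)\|^2\Big)^{1/2}\le\pi_2(u)\,\pi_2(v)\,\epsilon_2\big((e_n)\big)^2<\infty,
\]
contradicting $\lambda\notin\ell_1$. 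In the complementary case $(e_n)\notin\ell_2[X]$, $X$ fails to be an $\mathcal L_\infty$-space, and I would instead transplant into $X$ a block sequence carrying suitably normalized $\pm1$ matrices $E_{n_k}$, for which the relevant factorization norms $\gamma_2(E_{n_k}\colon\ell_1^{n_k}\to\ell_\infty^{n_k})$ tend to infinity; the associated bounded bilinear form on $X\times X$ then has infinite $2$-domination constant.

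The hard part will be this last case, together with the precise link, in both directions, between the K\"othe-dual condition $X^\times=\ell_1$ and the Grothendieck/$\mathcal L_\infty$ behaviour of $X$: when $(e_n)\notin\ell_2[X]$ the clean diagonal-form device is unavailable, and one has to work with the local (finite-dimensional) geometry of $X$, controlling the Grothendieck-type constants of its finite sections in order to manufacture a bilinear form of infinite $2$-domination constant. The displayed diagonal estimate carries the main idea (and already settles, e.g., all $\ell_p$ with $1<p<\infty$), but identifying the right block configuration in the general Banach-sequence-space setting is the genuinely delicate step.
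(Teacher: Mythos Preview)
The paper does not prove this theorem at all: it is simply quoted as a result of Carando and Sevilla-Peris \cite{cs}, with no argument given. So there is no ``paper's own proof'' to compare against; any comparison would have to be with the original source.

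That said, your overall strategy is the natural one and is almost certainly in the spirit of the original: reduce via Theorem~\ref{t9} to the question of whether \emph{every} bounded bilinear form on $X\times X$ is $2$-dominated, and then tie this to the K\"othe-dual condition. Your reduction of the asymmetric $Y\times Z$ extension problem to the symmetric one via the extendability of $2$-summing maps is correct and clean, and the $(\Leftarrow)$ direction through Grothendieck's theorem for $\mathcal L_\infty$-spaces is the right mechanism (this is exactly what makes such $X$ Hilbert--Schmidt in the sense of Proposition~\ref{p2}).

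There are, however, genuine gaps in your $(\Rightarrow)$ direction. First, in most treatments a ``Banach sequence space'' already sits between $\ell_1$ and $\ell_\infty$, so your Case~1 ($X\not\subseteq\ell_\infty$) is likely vacuous; in any event the ``suitable multiplier'' you invoke there is never specified. Second, your subcase split on whether $(e_n)\in\ell_2[X]$ does not line up with your parenthetical claim that the diagonal estimate ``already settles all $\ell_p$ with $1<p<\infty$'': for $1<p<2$ one has $(e_n)\notin\ell_2[\ell_p]$, so those spaces fall into what you call the hard case. The diagonal form $B_\lambda$ \emph{can} still be made to work there, but only by choosing $\lambda\in X^\times\setminus\ell_1$ with controlled decay and comparing the growth of $\sum_{n\le N}\lambda_n$ against $\epsilon_2\big((e_n)_{n\le N}\big)^2$; the bare hypothesis $(e_n)\in\ell_2[X]$ is too crude a dichotomy. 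Finally, the ``block configuration'' argument you sketch for the residual case is not an argument but a hope: you would need to show that failure of $X^\times=\ell_1$ forces the finite sections of $X$ to carry $\ell_1^n$- or $\ell_2^n$-like structure with growing constants, and that is precisely the substantive content of the Carando--Sevilla-Peris paper. As written, the necessity direction is a plausible outline rather than a proof.
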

\begin{remark}\label{r19}
 Regarding the description of Banach spaces satisfying (BEPS), we observe that
 \begin{enumerate}
\item[(i)] If $X$ is a Hilbert space and $Y$ is a subspace of $X$, 
then every bilinear form on $Y$ can be extended to $X$. This is easily seen by using the orthogonal projection on $Y$. 
Thus Hilbert spaces have (BEPS). More generally we have

\item[(ii)] Hilbert spaces even have the multilinear extension property and, 
in particular, the n- extension property involving the extension of scalar-valued polynomials of degree n.

\item[(iii)] Banach spaces having type 2 enjoy (BEPS). 
This is a consequence of Maurey’s extension theorem(see \cite[Theorem 7.4.4]{ak}) which says that each bounded linear map 
acting on a subspace of a
 Banach space of type 2 and taking values in a cotype 2 Banach space admits a bounded linear extension to the whole space. 
Conversely, Banach spaces having (BEPS) are of weak type 2 and so, are of type p for $1\leq p < 2$.  However, the 
latter property is 
also implied by Banach spaces having 2-extension property. Thus we have the following chain of implications:
type 2$\Rightarrow$(BEPS)$\Rightarrow$ weak type 2$\Rightarrow$ type p for $1\leq p< 2\Leftarrow 2$-ext. property.

 \end{enumerate}
\end{remark}
The above discussion motivates the following questions:
\begin{question}\label{q1}
Is it true that $2$-extension property implies weak type $2$?
\end{question}
\begin{question}
Does (BEPS) imply type $2$?
\end{question}

\section{HB-extension property as an (FD)-property}\label{sd}

We begin with the following question:\\
Do there exist Banach spaces X with the following strengthened HBE- property:\\
$(\ast) \forall$ Banach space $Z$, $\forall$ subspaces $Y\subset X, \forall g\in L(Y,Z),\exists f\in L(X,Z)$  
s.t.$ f|_Y=g$ $\&$ $\forall$ superspaces $W\supseteq X, \forall g\in L(X,Z),\exists f\in L(W,Z)$ s.t. $f|_X=g.$
\begin{remark}
An easy consequence of classical HB-theorem shows that all finite dimensional 
spaces $X$ satisfy $(\ast)$. It is also easy to show that there are no infinite dimensional Banach spaces
 verifying $(\ast)$. Indeed, the latter part of $(\ast)$ yields that X is injective, so by the result of
 Rosenthal quoted in Section \ref{sc}, Remark \ref{r5} (c), X contains a copy of $\ell_\infty$. Now the first part of 
$(\ast)$ gives that $c_0$(as a subspace of $\ell_\infty$ is complemented in X and hence in $\ell_\infty$, which 
contradicts the fact that $c_{0}$ is not complemented in $\ell_\infty$, as was observed in Section \ref{sc1}. 
In other words, $(\ast)$ is a finite-dimensional- property in the following sense. For a detailed account 
of the material included in this section, see \cite{mas1}.
\end{remark}
\begin{definition}\label{d4}
\textnormal{Given a property (P), we say that (P) is a finite-dimensional property ((FD)- property,
 for short) if it holds  for all finite dimensional Banach spaces but fails for each infinite dimensional Banach 
space. }
\end{definition}
\begin{example}
\textnormal{
\begin{enumerate}
\item[]
\item[(i)] Heine-Borel Property( $B_X$  is compact).
\item[(ii)] $ X^\ast=X'$(algebraic dual of $X$).
\item[(iii)] Completeness of the weak-topology on $X$.
\item[(iv)] $\ell_2 \{X\}=\ell_2 [X].$
\item[(v)] (Simultaneous) Hahn-Banach Extension property $(\ast)$.
\end{enumerate}
}
\end{example}
An important (FD)-property is provided by considering the Hilbert Schmidt property encountered earlier in Section \ref{sc}
 (following Proposition \ref{p2}). Some of the important examples of these spaces are provided by
\begin{example}
$c_0,\ell_\infty,\ell_1,C(K),L_\infty (Ω)\cdots.$
\end{example}
\begin{remark}\label{r8}
Hilbert+Hilbert Schmidt = Finite Dimensional i.e., a Banach space is simultaneously Hilbert and of  Hilbert Schmidt type
 if and only if it is finite dimensional.
\end{remark}
\begin{remark}[\textit{ Three important features of (FD)-properties}]\label{r9}
There are three important features involving (FD)-properties but which manifest themselves only in an infinite dimensional context. 
These three features involving a given finite dimensional property (P) derive from:
\begin{enumerate}
\item[(a)] Size of the set of objects failing (P).
\item[(b)] Factorisation property of (P).
\item[(c)] Frechet space analogue of (P).
\end{enumerate}
\end{remark}
{\bf (a).} Given an (FD)- property (P), it turns out that for a given infinite- dimensional Banach space $X$, 
the set of objects in X failing (P) is usually ‘very big’: it could be
topologically big(dense)\\
algebraically big(contains an infinite-dimensional space)\\
big in the sense of category(non-meagre),\\
big in the sense of functional analysis (contains an infinite-dimensional
closed subspace).
\begin{example}\label{e1}
\textnormal{
\begin{enumerate}
\item[(i)]  For an infinite-dimensional Banach space $X$, the set $\ell_2 
[X]/\ell_2\{X\}$ contains a c-dimensional subspace.
\item[(ii)] $ M(X)/M_{b\nu (X)}$ is non-meagre
\item[(iii)] $M([0,1],X)/B([0,1],X)$ contains an infinite-dimensional space.
Here $M(X)$ and $M_{b\nu}(X)$ stand respectively for the space of countably additive $X$-valued measures 
(respectively of bounded variation) whereas $M$ and $B$ 
denote, respectively, the classes of McShane and Bochner integrable functions on $[0,1]$ taking values in X. It 
may be noted that equality of the sets 
displayed in each of these examples is an (FD)-property.
 \end{enumerate}
 }
\end{example}
{\bf (b).} The (FD)-property given in Remark \ref{r8} above comes across as a universal (FD)-property in the following sense:

Given an (FD)-property (P), it turns out that we can write (P) as  a 'sum' of properties (Q) and (R)$:$(P) = (Q) 
$\wedge$ (R) in the sense that a
Banach space $X$ verifies 
\begin{enumerate}
\item[(Q)] iff X is Hilbertian and
\item[(R)] iff X is Hilbert-Schmidt.
\end{enumerate}
In other words, an (FD)-property lends itself to a 'decomposition' (factorisation) as a 'sum' of properties (Q) and (R)
 which are characteristic of Hilbertisability and Hilbert-Schmidt property, respectively.
\begin{example}\label{e2}
\textnormal{ Consider the (FD)-property $\ell_2 \{X\}=\ell_2 [X]$. The factorisation of this (FD)-property entails 
the search for an X-valued sequence space $\lambda(X)$ for which it always holds that $\ell_2 \{X\}\subset 
\lambda(X)\subset\ell_2 [X]$, regardless of $X$ and 
that strengthening these inclusions to equalities results in $X$ being (isomorphically) a Hilbert space and a Hilbert-Schmidt space, 
respectively. In the case under study, the right candidate for $\lambda(X)$ turns out to be the space:
$$ \lambda (X)=\bigg\{( x_n )\subset X: \sum_{n=1}^\infty \| T(x_n)\|^2 <\infty,\forall ~T \in L(X,\ell_2)\bigg\}$$
for which it holds that $\ell_2 \{X\} \subset\lambda (X) \subset \ell_2 [X]$. Now, the desired factorization of 
the equality in Example \ref{e1} (i)
 is given in the following theorem:}
\end{example}
\begin{theorem}
 For a Banach space X, the following statements are true:
 \begin{enumerate}
\item[(a)] $\ell_2 \{X\} =\lambda(X)$ if and only if X is Hilbertian.
\item[(b)] $\lambda(X)=\ell_2 [X]$ if and only if X is a Hilbert-Schmidt space.
\end{enumerate}
\end{theorem}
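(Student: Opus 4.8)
The plan is to record the two easy inclusions $\ell_2\{X\}\subseteq\lambda(X)\subseteq\ell_2[X]$ and the two ``if'' directions in a few lines, and then obtain the two ``only if'' directions by first promoting the assumed equalities of sets to norm inequalities via the open mapping theorem and then exploiting these inequalities. For the inclusions: $\sum\|x_n\|^2<\infty$ gives $\sum\|Tx_n\|^2\le\|T\|^2\sum\|x_n\|^2<\infty$ for every $T\in L(X,\ell_2)$, so $\ell_2\{X\}\subseteq\lambda(X)$; applying the defining condition of $\lambda(X)$ to the rank-one maps $x\mapsto\langle x,f\rangle e$, $f\in B_{X^*}$, $e$ a unit vector of $\ell_2$, gives $\sum|\langle x_n,f\rangle|^2<\infty$, so $\lambda(X)\subseteq\ell_2[X]$. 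If $X$ is isomorphic to a Hilbert space via $j\colon X\to\ell_2$, then $(x_n)\in\lambda(X)$ forces $\sum\|jx_n\|^2<\infty$, whence $\sum\|x_n\|^2\le\|j^{-1}\|^2\sum\|jx_n\|^2<\infty$ and $\lambda(X)=\ell_2\{X\}$; and if $X$ is Hilbert--Schmidt, then by Theorem \ref{t10} every $T\in L(X,\ell_2)$ is $2$-summing, so for $(x_n)\in\ell_2[X]$ the finitary estimate $(*)$ gives $\sum\|Tx_n\|^2\le\pi_2(T)^2\,\epsilon_2((x_n))^2<\infty$ and $\lambda(X)=\ell_2[X]$.

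For the converse in (b), equip $\lambda(X)$ with its natural norm $N((x_n))=\sup_{\|T\|\le1}\big(\sum_n\|Tx_n\|^2\big)^{1/2}$, which is finite on $\lambda(X)$ by the uniform boundedness principle and makes $\lambda(X)$ complete (routine). The inclusion of $\ell_2[X]$, with its $\epsilon_2$-norm, into $(\lambda(X),N)$ is bounded and, by hypothesis, onto, so the open mapping theorem gives a constant $C$ with $\epsilon_2((x_i))\le C\,N((x_i))$ for all finite families. Now if some $T\in L(X,\ell_2)$ were not $2$-summing, then the ratio in $(*)$ would be unbounded over finite families; picking for each $k$ a finite family with ratio exceeding $k$ and rescaling it so its $\epsilon_2$-norm equals $1/k$, the concatenation of these blocks lies in $\ell_2[X]$ (as $\sum_k k^{-2}<\infty$) but satisfies $\sum_i\|Tx_i^{(k)}\|^2>1$ in every block, hence $\sum_n\|Tx_n\|^2=\infty$, contradicting $\lambda(X)=\ell_2[X]$. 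Thus $L(X,\ell_2)=\Pi_2(X,\ell_2)$, and $X$ is Hilbert--Schmidt by Theorem \ref{t10}.

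For the converse in (a) --- the crux --- the same open mapping argument, now comparing the $\pi_2$-norm on $\ell_2\{X\}$ with $N$, produces a constant $C$ with
\[
\Big(\sum_{i=1}^m\|x_i\|^2\Big)^{1/2}\ \le\ C\,\sup_{\|T\colon X\to\ell_2\|\le1}\Big(\sum_{i=1}^m\|Tx_i\|^2\Big)^{1/2}\qquad(\star)
\]
for all finite families in $X$; from $(\star)$ I would read off that $X$ has both cotype $2$ and type $2$. For a contraction $T$ the Hilbert-space identity $\sum_i\|Tx_i\|^2=\mathbb{E}_\varepsilon\big\|T\big(\sum_i\varepsilon_ix_i\big)\big\|^2\le\mathbb{E}_\varepsilon\big\|\sum_i\varepsilon_ix_i\big\|^2$ bounds the right side of $(\star)$ by $\big(\mathbb{E}_\varepsilon\|\sum_i\varepsilon_ix_i\|^2\big)^{1/2}$, which is the cotype $2$ inequality. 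For type $2$, apply $(\star)$ to the family $\{\sum_i\varepsilon_i^{(j)}x_i\}_{j=1}^{2^n}$ of all sign combinations of a fixed $x_1,\dots,x_n$: orthogonality of the sign patterns gives $\sum_j\|T(\sum_i\varepsilon_i^{(j)}x_i)\|^2=2^n\sum_i\|Tx_i\|^2\le2^n\sum_i\|x_i\|^2$ for every contraction $T$, while $\sum_j\|\sum_i\varepsilon_i^{(j)}x_i\|^2=2^n\,\mathbb{E}_\varepsilon\|\sum_i\varepsilon_ix_i\|^2$; feeding these into $(\star)$ and cancelling $2^{n/2}$ yields $\big(\mathbb{E}_\varepsilon\|\sum_i\varepsilon_ix_i\|^2\big)^{1/2}\le C\big(\sum_i\|x_i\|^2\big)^{1/2}$, i.e. the type $2$ inequality. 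Having both type $2$ and cotype $2$, Kwapie\'n's theorem shows that $X$ is isomorphic to a Hilbert space, which finishes (a).

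I expect the main obstacle to be (a): one has to notice that $(\star)$ is almost content-free on a single family --- since $\sup_{\|T\|\le1}\sum_i\|Tx_i\|^2\le\sum_i\|x_i\|^2$ always --- yet it forces both type $2$ and cotype $2$ once it is tested against the right families (all sign combinations for type $2$; an arbitrary family together with a norm-shrinking contraction for cotype $2$), after which Kwapie\'n's theorem does the rest. The only other places needing care are the completeness of $\lambda(X)$ (so that the open mapping theorem is available) and the gliding-hump bookkeeping in (b).
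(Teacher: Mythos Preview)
The paper states this theorem without proof; the surrounding discussion refers the reader to \cite{mas1} for a detailed account, so there is no in-paper argument to compare against.

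Your proof is correct. The easy inclusions and the two ``if'' directions are handled cleanly. In (b) you set up the open mapping theorem to obtain a constant $C$, but then your gliding-hump construction never actually uses it: the direct block argument already produces, from the mere set equality $\lambda(X)=\ell_2[X]$, a sequence in $\ell_2[X]$ on which any non-$2$-summing $T$ would diverge. That is fine, just slightly redundant. For (a), the open mapping step is genuinely needed, and your derivation of both type~$2$ and cotype~$2$ from $(\star)$ is correct and elegant: the cotype~$2$ inequality drops out immediately from the Hilbert-space identity $\sum_i\|Tx_i\|^2=\mathbb{E}_\varepsilon\|T(\sum_i\varepsilon_ix_i)\|^2$, while testing $(\star)$ on the full set of $2^n$ sign combinations and using the same identity on the image side yields type~$2$; Kwapie\'n's theorem then closes the argument. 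The only points you flag as needing care---completeness of $(\lambda(X),N)$ so that the open mapping theorem applies, and the finiteness of $N$ via a closed-graph/UBP argument---are indeed routine and go through as you indicate.
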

\begin{example}\label{e3}
\textnormal{The (FD)-property involving the (strengthened) Hahn-Banach property$(\ast)$ (for $Z=\ell_2$) admits a beautiful factorisation as is illustrated 
in the following theorems. }
\end{example}
\begin{theorem}\label{t11}
For a given Banach space $X$, TFAE:
\begin{enumerate}
\item[(i)] A bounded linear map on $X$ into  $\ell_2$ extends to a bounded linear map on any
 superspace of X.
\item[(ii)] $X$ is a Hilbert-Schmidt space.
\end{enumerate}
\end{theorem}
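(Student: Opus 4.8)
The plan is to establish the two implications separately; the hinge in both is Theorem~\ref{t10}, which characterises the Hilbert--Schmidt spaces as precisely those Banach spaces $X$ with $L(X,\ell_2)=\Pi_2(X,\ell_2)$.

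For (i)$\Rightarrow$(ii) I would argue as follows. Fix $u\in L(X,\ell_2)$ and embed $X$ isometrically into the $C(K)$-space $E=C(B_{X^{\ast}})$, where $B_{X^{\ast}}$ carries its (compact Hausdorff) weak$^{\ast}$ topology, via $x\mapsto(x^{\ast}\mapsto\langle x,x^{\ast}\rangle)$. By hypothesis $u$ extends to some $\tilde u\in L(E,\ell_2)$. By Grothendieck's theorem every bounded operator from a $C(K)$-space into a Hilbert space is $2$-summing, so $\tilde u\in\Pi_2(E,\ell_2)$; since the restriction of a $2$-summing map to a subspace is again $2$-summing, $u=\tilde u|_X\in\Pi_2(X,\ell_2)$. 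As $u$ was arbitrary, $L(X,\ell_2)=\Pi_2(X,\ell_2)$, and Theorem~\ref{t10} gives that $X$ is Hilbert--Schmidt.

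For (ii)$\Rightarrow$(i), take $u\in L(X,\ell_2)$; by Theorem~\ref{t10} it is $2$-summing, so the Pietsch factorisation theorem supplies a regular Borel probability measure $\mu$ on $B_{X^{\ast}}$ (weak$^{\ast}$ topology) with $u=\hat u\circ\kappa$, where $\kappa\colon X\to L_2(\mu)$ is the canonical map $\kappa(x)(x^{\ast})=\langle x,x^{\ast}\rangle$ and $\|\hat u\|\le\pi_2(u)$. Given a superspace $W\supseteq X$, I would transport this factorisation up the restriction map $q\colon B_{W^{\ast}}\to B_{X^{\ast}}$, $f\mapsto f|_X$, which is weak$^{\ast}$-continuous and, by Hahn--Banach, surjective. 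Then $g\mapsto g\circ q$ embeds $C(B_{X^{\ast}})$ as a sublattice of $C(B_{W^{\ast}})$ containing the constants, and a positive-functional (M.\ Riesz) extension of $g\mapsto\int g\,d\mu$ from this sublattice yields a regular Borel probability measure $\nu$ on $B_{W^{\ast}}$ with $q_{\ast}\nu=\mu$; consequently $g\mapsto g\circ q$ is an isometry $L_2(\mu)\hookrightarrow L_2(\nu)$. The canonical map $\kappa'\colon W\to L_2(\nu)$, $\kappa'(w)(f)=\langle w,f\rangle$, has norm $\le 1$ and restricts on $X$ to $\kappa$ under this isometry; extending $\hat u$ to $\bar u\colon L_2(\nu)\to\ell_2$ by precomposing with the orthogonal projection of $L_2(\nu)$ onto the copy of $L_2(\mu)$ (legitimate since $L_2(\nu)$ is a Hilbert space), I set $\tilde u=\bar u\circ\kappa'$. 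Then $\tilde u\in L(W,\ell_2)$, $\tilde u|_X=u$, and $\|\tilde u\|\le\pi_2(u)$, completing the implication.

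The parts I would simply quote are the Pietsch factorisation theorem, the elementary fact that restrictions of $2$-summing maps are $2$-summing (with no larger $2$-summing norm), and the $C(K)$-to-Hilbert-space instance of Grothendieck's theorem. The step I expect to be the main obstacle is the transport of the factorisation of $u$ across the inclusion $X\subseteq W$ in (ii)$\Rightarrow$(i): extending the defining functionals one at a time by Hahn--Banach founders on the measurability of such a selection, which is precisely why I would instead route the argument through $q\colon B_{W^{\ast}}\to B_{X^{\ast}}$ and a pullback of the Pietsch measure, reducing the difficulty to the standard existence of a probability measure with prescribed pushforward under a continuous surjection of compact Hausdorff spaces.
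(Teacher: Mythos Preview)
Your argument is correct, and for (i)$\Rightarrow$(ii) it is exactly the paper's: embed $X$ isometrically into a $C(K)$ space, extend, apply Grothendieck's little theorem, restrict, and invoke Theorem~\ref{t10}.

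For (ii)$\Rightarrow$(i) the paper is terser: it simply quotes as a black box the standard fact that a $2$-summing map defined on a subspace always extends to a $2$-summing map on the ambient space (the so-called injectivity of the ideal $\Pi_2$). What you have written is in effect a proof of that fact, carried out by pulling the Pietsch measure back along the restriction $q\colon B_{W^{\ast}}\to B_{X^{\ast}}$ and then projecting orthogonally in $L_2(\nu)$. This is one of the standard routes to the extension theorem and is perfectly sound; it also gives the explicit bound $\|\tilde u\|\le\pi_2(u)$. A slightly shorter variant, which you might prefer if you want to avoid the measure-lifting step, is to note that the Pietsch factorisation already routes $u$ through $X\to C(B_{X^{\ast}})\hookrightarrow L_\infty(\mu)\to L_2(\mu)\to\ell_2$, and since $L_\infty(\mu)$ is $1$-injective the map $X\to L_\infty(\mu)$ extends to $W$ directly; composing with the remaining factors gives the desired extension without constructing $\nu$. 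Either way, the substance is the same as the paper's appeal to the $2$-summing extension theorem.
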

Proof of this equivalence is an easy consequence of Theorem \ref{t10} combined with the following well known facts involving 2-summing maps and 
C(K) spaces:
\begin{enumerate}
\item[(a)]	(Grothedieck’s little theorem): A bounded linear map from a C(K) space into $\ell_2$ is always 2-summing.
\item[(b)]	A 2-summing map defined on a subspace of a Banach space can always be extended to a 2-summing map ( in particular a bounded linear map)
\item[(c)]	Every Banach space can be (isometrically) embedded into a C(K) space.
\end{enumerate}
\begin{theorem}\label{t12}
For a Banach space $X$ with $dim~~X>2$, TFAE:
\begin{enumerate}
\item[(i)] A bounded linear map defined on a closed subspace of $X$ and taking values in an arbitrary Banach space $Z$ extends to a bounded linear map on $X$.
\item[(ii)] $X$ is a Hilbert space.
\end{enumerate}
\end{theorem}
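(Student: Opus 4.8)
The plan is to prove the two implications separately; (ii)$\,\Rightarrow\,$(i) is routine, and the whole content of the theorem sits in (i)$\,\Rightarrow\,$(ii).

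For (ii)$\,\Rightarrow\,$(i): assume $X$ is a Hilbert space, let $Y$ be a closed subspace of $X$, let $Z$ be an arbitrary Banach space, and let $g\in L(Y,Z)$. If $P\colon X\to Y$ denotes the orthogonal projection, then $f:=g\circ P$ belongs to $L(X,Z)$, restricts to $g$ on $Y$, and in fact satisfies $\|f\|\leq\|g\|$ since $\|P\|=1$. So (i) holds, and no dimension hypothesis is needed here.

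The substance is (i)$\,\Rightarrow\,$(ii). The key observation is that (i) is secretly a statement about complementation: applying (i) with $Z=Y$ and $g=\mathrm{id}_Y$ yields $f\in L(X,Y)$ with $f|_Y=\mathrm{id}_Y$, which is exactly a bounded linear projection of $X$ onto $Y$. Since $Y$ ranges over all closed subspaces of $X$, condition (i) asserts precisely that every closed subspace of $X$ is complemented in $X$. At this point I would invoke the Lindenstrauss--Tzafriri solution of the complemented-subspace problem: a Banach space all of whose closed subspaces are complemented is isomorphic to a Hilbert space. This gives (ii). (The same reduction re-derives (ii)$\,\Rightarrow\,$(i) once one knows that having complemented subspaces suffices, so in fact (i) is equivalent to the statement that every closed subspace of $X$ is complemented.)

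The main obstacle is that the Lindenstrauss--Tzafriri theorem is itself deep — it runs through local Banach space theory (type/cotype estimates and Kwapie\'n's theorem) — so the proof of (i)$\,\Rightarrow\,$(ii) is really a citation rather than a hands-on argument. A more self-contained route is available if one reads (i) in its norm-preserving form, i.e.\ as the assertion that every closed subspace of $X$ is the range of a norm-one projection — which is exactly what the argument for (ii)$\,\Rightarrow\,$(i) actually produces. Under that reading it suffices that every two-dimensional subspace of $X$ be one-complemented, and the classical theorem of Kakutani (and Bohnenblust) then forces $X$ to be isometrically a Hilbert space as soon as $\dim X\geq 3$. This is precisely where the hypothesis $\dim X>2$ is indispensable: in dimension two the conclusion fails outright, for instance for $(\mathbb{R}^2,\|\cdot\|_1)$, in which every line is the range of a norm-one projection although the space is not Euclidean. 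I would present the isomorphic version via Lindenstrauss--Tzafriri as the main line and record the Kakutani argument as the sharper isometric statement, noting that the dimension hypothesis belongs to the latter.
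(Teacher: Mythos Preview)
Your proposal is correct and matches the paper's approach: both directions are argued exactly as you describe, with (ii)$\Rightarrow$(i) via orthogonal projection and (i)$\Rightarrow$(ii) by reading (i) as ``every closed subspace is complemented'' and then invoking Lindenstrauss--Tzafriri, with the isometric Kakutani version recorded separately as the place where $\dim X>2$ enters. The only thing the paper adds is a remark that the heart of Lindenstrauss--Tzafriri is a localization step (uniform projection constants on finite-dimensional subspaces) combined with Joichi's criterion that uniformly bounded Banach--Mazur distance to $\ell_2^n$ characterizes Hilbertisability.
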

Proof shall be taken up in Section \ref{se}.
\begin{remark}
(i). The isometric analogue of the above theorem is an old result of Kakutani \cite{ki}  which asserts
 the equivalence of condition (i) of Theorem \ref{t12} subject to preservation of norm of the extended map to X being isometrically a Hilbert space.\\
\textit{Remark} 4.13. (ii). It is worth remarking that whereas in Theorem \ref{t11} the single space $\ell_2$ has been used as the range space for mappings 
to characterize $X$ as a Hilbert Schmidt space, that is not the case with Theorem \ref{t12} where $X$ has been characterized as a Hilbert
 space by allowing the range space of mappings to vary over the whole collection of Banach spaces. As has already been seen, using 
the whole lot of Banach spaces in place of the single space $\ell_2$ in Theorem \ref{t11} would lead to $X$ being an isometric copy of C(K) 
for a compact Hausdorff exremally disconnected space K. On the other hand, allowing the single space $\ell_2$ in Theorem \ref{t12} instead of 
the entire class of Banach spaces for mappings on $X$ to take their values in, would result in $X$ being merely a weak type 2 Banach space.
 The latter statement is a partial converse of the famous Maurey's extension theorem alluded to in Section \ref{sc}  (Remark \ref{r19}), due to P. Casazza 
and N. J. Nielsen \cite{cn}. As noted by these authors, the result follows immediately from a theorem of V. D. Milman and G. Pisier   
  ( Banach spaces with a weak cotype 2 property, Israel J. Math. 54(1986), 139-158).
\end{remark}
\begin{theorem}\label{t14}
Let $X$ be Banach space such that each $\ell_2$- valued bounded linear maps on subspaces of $X$ 
can be extended to a 
 bounded linear map on the whole space $X$. Then $X$ is of weak type $2$. In particular, $X$ has  type p for 
$1\leq p<2$.
\end{theorem}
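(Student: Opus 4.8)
Up to the ``$\ell_2$-range'' restriction, the statement is the partial converse of Maurey's extension theorem recorded in Remark \ref{r19} (the full converse, with conclusion ``$X$ is of type $2$'', being false), and the plan, following Casazza and Nielsen \cite{cn}, is to normalise the hypothesis to a uniform extension constant, transfer the resulting property to $X^{\ast}$ by duality, and then feed it into the Milman--Pisier local theory quoted in the remark preceding the statement. First I would extract a constant: for a fixed subspace $Y\subset X$ the restriction map $S\mapsto S|_Y$ is a bounded surjection of $L(X,\ell_2)$ onto $L(Y,\ell_2)$, so the open mapping theorem yields $\lambda_Y$ such that every $T\in L(Y,\ell_2)$ has an extension $\tilde T\in L(X,\ell_2)$ with $\|\tilde T\|\le\lambda_Y\|T\|$; a standard uniformisation of the kind used in the study of the $(\lambda,C)$-extension property makes $\lambda_Y$ independent of $Y$, and since weak type $2$ is a local property it is in fact enough to have such a uniform $\lambda$ over finite dimensional $Y$ only.

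Next I would pass to adjoints. Writing $\iota:Y\hookrightarrow X$ for the inclusion, an operator $\tilde T\in L(X,\ell_2)$ extends $T\in L(Y,\ell_2)$ exactly when $\iota^{\ast}\circ\tilde T^{\ast}=T^{\ast}$, that is, when $\tilde T^{\ast}:\ell_2\to X^{\ast}$ lifts $T^{\ast}:\ell_2\to X^{\ast}/Y^{\perp}$ through the quotient map $\iota^{\ast}$; moreover every operator from $\ell_2$ into the quotient $X^{\ast}/Y^{\perp}=Y^{\ast}$ is of this form and with the same norm (restrict its adjoint to $Y$). Hence the hypothesis is equivalent to saying that $X^{\ast}$ admits, with a uniform constant, liftings of $\ell_2$-valued operators through quotient maps having $w^{\ast}$-closed kernel -- precisely the adjoint form of the Maurey extension property, and one that would already follow from $X^{\ast}$ having cotype $2$.

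The geometric substance is then to see that this uniform $\ell_2$-lifting property, though a priori far weaker than cotype $2$ of $X^{\ast}$, nevertheless forces $X^{\ast}$ to be of weak cotype $2$: this is the content of the Milman--Pisier theorem cited before the statement. By the (one-sided) weak-type/weak-cotype duality belonging to that theory, $X$ is then of weak type $2$, and the closing assertion that $X$ has type $p$ for every $1\le p<2$ is simply the last link of the implication chain recorded in Remark \ref{r19}. I expect this last stage to be the real obstacle, since everything nontrivial is packaged inside the Milman--Pisier result, whose proof requires the full local-theory toolkit -- iterated Dvoretzky-type extraction of Euclidean subspaces together with control of the accompanying projection constants, $\ell$-norm and volume-ratio estimates, and the duality between the weak type $2$ and weak cotype $2$ classes. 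The remaining points are routine bookkeeping: the uniformisation in the first step, and the fact that the adjoint reformulation directly furnishes only quotients by $w^{\ast}$-closed subspaces, which is harmless because finite-codimensional subspaces are automatically $w^{\ast}$-closed and the target conclusion is local.
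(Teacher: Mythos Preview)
The paper itself gives no proof of this theorem beyond the attribution to Casazza--Nielsen and the remark that ``the result follows immediately from a theorem of V.~D.~Milman and G.~Pisier'', so there is little to compare against at the level of detail; your identification of the references and of the need to uniformise the extension constant is exactly in line with what the paper records.

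There is, however, a genuine gap in the dualisation route you propose. Your scheme is: extension property for $X$ $\Longleftrightarrow$ $\ell_2$-lifting property for $X^{\ast}$ $\Longrightarrow$ $X^{\ast}$ has weak cotype $2$ $\Longrightarrow$ $X$ has weak type $2$. The last arrow is not valid in general: take $X=C[0,1]$, so that $X^{\ast}$ is an $L_1(\mu)$-space and hence has cotype $2$ (a fortiori weak cotype $2$), yet $X$ contains $\ell_\infty^n$ uniformly and so has no nontrivial type, let alone weak type $2$. The duality ``$X^{\ast}$ weak cotype $2$ $\Rightarrow$ $X$ weak type $2$'' requires the extra hypothesis that $X$ be $K$-convex, and you have not supplied this independently. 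The gap is fillable --- the extension hypothesis does rule out uniform copies of $\ell_1^n$, since the large Euclidean sections sitting inside $\ell_1^n$ (via Dvoretzky for a cotype-$2$ space) cannot be uniformly complemented in $\ell_1^n$, hence not in $X$ --- but this step is missing from your outline and is not a triviality.

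The route that avoids this detour, and is presumably the one Casazza--Nielsen have in mind, stays in $X$ throughout: from the uniform extension constant $\lambda$ one reads off directly that every subspace $E\subset X$ with $d(E,\ell_2^{\dim E})\le K$ is $\lambda K$-complemented in $X$ (extend the near-isometry $E\to\ell_2^{\dim E}\hookrightarrow\ell_2$ and compose back with its inverse to obtain a projection onto $E$). It is precisely this uniform complementation of Euclidean subspaces that the Milman--Pisier theory identifies with weak type $2$, and one then reads off type $p$ for $p<2$ as in your last sentence. No passage to $X^{\ast}$ and no separate $K$-convexity argument is needed at the level of the statement one quotes.
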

The above discussion motivates the following question.
\begin{question}\label{q15}
Is it possible to enlarge the class of maps on $X$ (and taking values in $\ell_2$) appearing in Theorem \textnormal{\ref{t11}} and Theorem \textnormal{\ref{t14}} 
in order to characterise injectivity and Hilbertisability, respectively, of the Banach space $X$?
\end{question}
It turns out that the question has an affirmative answer if extension is sought of the (considerably larger class of) Lipschitz maps in place of 
bounded linear maps on subsets to supersets of the domain space.(See Theorem \ref{te7} and Theorem \ref{te8}).
\begin{question}\label{q16}
 Do there exist infinite dimensional Banach spaces X such that the following holds:
$(\ast\ast)~ \forall$~ subspaces~ $Y\subset X,$~ $\forall g\in L(Y,\ell_2 ),$ $\exists f\in L(X,\ell_2 )~$  
s.t.~ $f |_{Y}=g~ \forall~$ superspaces $~W\supseteq X, \forall ~g∈L(X,\ell_2 ),\exists ~f\in L(W,\ell_2 )$ ~ 
s.t.~$f |_{X}=g.$
\end{question}
{\bf Answer: NO!}\\
Unlike the proof in the negative of the question posed in $(\ast)$, the proof of the above statement is much deeper and draws upon 
several techniques involving local theory of Banach spaces, including Theorem \ref{t14}. Indeed, the first part of $(\ast\ast)$ 
combined with the indicated theorem above yields that X has weak type 2 and so has nontrivial type. However, Theorem \ref{t11} of Section \ref{sd} 
applied to the first part of $(\ast\ast)$ gives that X is a Hilbert Schmidt space. Finally, an
application of Dvoretzky’s spherical sections theorem yields that an infinite dimensional Hilbert Schmidt space cannot have nontrivial 
type!(See \cite[Chapter 19,Notes and Remarks]{djt})

We now provide bilinear analogues of Theorems \ref{t11} and \ref{t12} which admit the following formulations:
\begin{theorem}\label{t17a}
For a Banach space $X$, TFAE:
\begin{enumerate}
\item[(i)] For each $L_\infty$ space $Z$, $X\times Z$ has (BEPC).
\item[(ii)] $X^\ast$ has (GT).
\end{enumerate}
\end{theorem}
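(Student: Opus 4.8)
The plan is to follow the template already used for Theorem \ref{t11}: first turn the bilinear extension property in (i) into an operator-ideal identity, and then recognise that identity as a standard reformulation of the (GT) property of $X^{\ast}$ (Remark \ref{r7}). Two ingredients will do the work. The first is the bilinear analogue of Theorem \ref{t9} for a \emph{pair} of spaces: a bilinear form $B$ on $X\times Z$ extends to a bilinear form on $W_{1}\times W_{2}$ for \emph{every} pair of superspaces $X\subseteq W_{1}$, $Z\subseteq W_{2}$ if and only if there are a Hilbert space $H$ and $u\in\Pi_{2}(X,H)$, $v\in\Pi_{2}(Z,H)$ with $B(x,z)=\langle u(x),v(z)\rangle$. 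The second is Grothendieck's little theorem: every bounded operator from an $L_{\infty}$ space into a Hilbert space is $2$-summing.

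\emph{Step 1 (reduction of (i)).} Fix an $L_{\infty}$ space $Z$ and associate with a bilinear form $B$ on $X\times Z$ the operator $\tau_{B}\colon X\to Z^{\ast}$, $B(x,z)=\langle\tau_{B}(x),z\rangle$. If $B$ is extendible, the bilinear analogue of Theorem \ref{t9} gives $B(x,z)=\langle u(x),v(z)\rangle$ with $u\in\Pi_{2}(X,H)$, $v\in\Pi_{2}(Z,H)$; then $\tau_{B}=v^{\ast}\circ u$ is a bounded map composed with a $2$-summing one, so $\tau_{B}\in\Pi_{2}(X,Z^{\ast})$ by the ideal property. Conversely, if $\tau_{B}\in\Pi_{2}(X,Z^{\ast})$, a Pietsch factorisation writes $\tau_{B}=b\circ a$ with $a\in\Pi_{2}(X,L_{2}(\mu))$ and $b\colon L_{2}(\mu)\to Z^{\ast}$ bounded; putting $u=a$ and $v=b^{\ast}|_{Z}\colon Z\to L_{2}(\mu)$, the map $v$ is automatically $2$-summing by Grothendieck's little theorem (as $Z$ is an $L_{\infty}$ space), one checks $v^{\ast}=b$, so $B(x,z)=\langle u(x),v(z)\rangle$ and $B$ is extendible, again by the bilinear analogue of Theorem \ref{t9}. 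Hence (i) is equivalent to: $L(X,Z^{\ast})=\Pi_{2}(X,Z^{\ast})$ for every $L_{\infty}$ space $Z$.

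\emph{Step 2 (identification with (GT)).} As $Z$ runs over the $L_{\infty}$ spaces, $Z^{\ast}$ runs over the $L_{1}$ (predual) spaces, and, $\ell_{1}$ being $1$-complemented in every such dual together with the local structure of $L_{1}$-spaces (and the principle of local reflexivity), it suffices to test the identity of Step 1 on $Z^{\ast}=\ell_{1}$. It then remains to establish the ideal-theoretic equivalence
\[
L(X,L_{1}(\mu))=\Pi_{2}(X,L_{1}(\mu))\ \text{for all }\mu \quad\Longleftrightarrow\quad L(X^{\ast},\ell_{2})=\Pi_{1}(X^{\ast},\ell_{2}),
\]
whose right-hand side is, by Remark \ref{r7}, precisely ``$X^{\ast}$ has (GT)''. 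This is where Grothendieck's fundamental theorem enters: passing to adjoints turns a $2$-summing operator $X\to L_{1}(\mu)$ into one whose adjoint $L_{\infty}(\mu)\to X^{\ast}$ factors through a Hilbert space, and Grothendieck's theorem (operators from $\mathcal{L}_{\infty}$ spaces to $\mathcal{L}_{1}$ spaces are $1$-summing) together with trace duality converts the one-sided factorisation condition on $X$ into the self-dual (GT) condition on $X^{\ast}$; I would quote this equivalence from the operator-ideal literature rather than reprove it.

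\emph{Expected main obstacle.} Two points carry the weight. First, the paper states Theorem \ref{t9} only for bilinear forms on $X\times X$, so I must verify that the same Pietsch-type argument yields the asymmetric version on $X\times Z$ used throughout Step 1 — routine, but it has to be written out with care about which leg absorbs which factor. Second, and more seriously, Step 2 is the genuinely Grothendieck-theoretic part: one must be precise about the meaning of ``$L_{\infty}$ space'' (it must at least furnish $\ell_{1}$, $\ell_{1}(\Gamma)$ and the measure spaces $\mathcal{M}(K)$ as duals) and about the trace-duality bookkeeping that upgrades the ideal identity for maps \emph{into} $L_{1}$-spaces to the (GT) identity for maps \emph{out of} $X^{\ast}$ — bridging that asymmetry, which is possible only because of Grothendieck's theorem, is the crux.
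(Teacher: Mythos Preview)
The paper states Theorem~\ref{t17a} without proof, so there is no argument in the text against which to compare your proposal directly. What follows is therefore an assessment of your sketch on its own terms.

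Your Step~1 is sound. The asymmetric version of Theorem~\ref{t9} goes through by the same Pietsch-type argument, and combining it with the little Grothendieck theorem on the $Z$-leg correctly reduces (i) to the identity $L(X,Z^{\ast})=\Pi_{2}(X,Z^{\ast})$ for every $L_{\infty}$ space $Z$; this in turn localises to $L(X,\ell_{1})=\Pi_{2}(X,\ell_{1})$ as you indicate.

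Step~2, however, is where the proof must actually happen, and here you defer the work by proposing to ``quote this equivalence from the operator-ideal literature.'' The difficulty is that the equivalence
\[
L(X,\ell_{1})=\Pi_{2}(X,\ell_{1}) \quad\Longleftrightarrow\quad L(X^{\ast},\ell_{2})=\Pi_{1}(X^{\ast},\ell_{2})
\]
is not a one-line citation. The two sides involve \emph{different} ideals ($\Pi_{2}$ versus $\Pi_{1}$) acting on \emph{different} spaces ($X$ into $\ell_{1}$ versus $X^{\ast}$ into $\ell_{2}$), and neither ordinary trace duality nor any self-duality of $\Pi_{2}$ bridges that gap directly; adjoints of $2$-summing maps are not $2$-summing in general, so the naive passage $T\mapsto T^{\ast}$ does not transport the ideal identity. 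Your narrative (``adjoints plus Grothendieck plus trace duality'') names plausible ingredients but does not assemble them: you must explain precisely how the $\Pi_{2}$ condition into $\ell_{1}$ is upgraded to, or shown equivalent to, a $\Pi_{1}$ condition out of $X^{\ast}$, and why no extra hypothesis on $X$ (reflexivity, approximation property, local reflexivity in a quantitative form) is needed along the way. You yourself flag this as ``the crux'' --- correctly --- but flagging the obstacle is not the same as overcoming it. As written, Step~2 is the genuine gap.
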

\begin{theorem}\label{t17b}
For a Banach space $X$, TFAE:
\begin{enumerate}
\item[(i)] For each closed subspace $Z$ of $X$, each bilinear form on $Z\times Z^\ast$ admits an extension to a bilinear 
form on $X\times Z^\ast$.
\item[(ii)] X is (isomorphically) a Hilbert space.
\end{enumerate}
\end{theorem}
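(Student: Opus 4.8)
The result is an isomorphic, bilinear analogue of the Lindenstrauss--Tzafriri theorem on complemented subspaces, and the plan is to treat the two implications separately.

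The implication (ii)$\,\Rightarrow\,$(i) is the soft one and is essentially Remark \ref{r19}(i): if $X$ is isomorphic to a Hilbert space and $Z\subseteq X$ is a closed subspace, choose a bounded projection $P\colon X\to Z$; given a bounded bilinear form $B$ on $Z\times Z^{*}$, the form $\widetilde B(x,\phi):=B(Px,\phi)$ is a bounded bilinear form on $X\times Z^{*}$, and since $Pz=z$ for $z\in Z$ it extends $B$. For the converse I would first recast (i) in operator language. A bounded bilinear form $B$ on $Z\times Z^{*}$ is the same as an operator $\tau_{B}\in L(Z,Z^{**})$, $\tau_{B}(z)=B(z,\cdot)$, and extending $B$ to a bounded bilinear form on $X\times Z^{*}$ is exactly extending $\tau_{B}$ to a bounded operator $X\to Z^{**}$; transposing in the second variable, this is in turn the same as lifting $\mathrm{id}_{Z^{*}}$ through the restriction surjection $r_{Z}\colon X^{*}\to Z^{*}$, i.e.\ the same as $Z^{\perp}=\ker r_{Z}$ being complemented in $X^{*}$. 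So (i) is equivalent to the statement that \emph{every weak${}^{\ast}$-closed subspace of $X^{*}$ is complemented in $X^{*}$}. This property passes to every closed subspace of $X$ (if $A\supseteq B$ are both complemented in a Banach space $W$, then $A/B$ is complemented in $W/B$; apply this to $W=X^{*}$, $A=Z^{\perp}$, $B=Y^{\perp}$ with $Z\subseteq Y$) and to every complemented subspace of $X$.

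Now assume (i) holds. If $X$ is reflexive then for a closed subspace $Z$ we have $Z^{**}=Z$, and the reformulation applied to $\mathrm{id}_{Z}\in L(Z,Z)=L(Z,Z^{**})$ yields a bounded operator $T\colon X\to Z$ with $T|_{Z}=\mathrm{id}_{Z}$ — a projection of $X$ onto $Z$. Hence every closed subspace of $X$ is complemented, so every bounded linear map on a closed subspace of $X$ extends to $X$ (precompose with the projection), and $X$ is isomorphically a Hilbert space by Theorem \ref{t12} (the cases $\dim X\le 2$ being trivial). Everything therefore reduces to showing that (i) forces $X$ to be reflexive. My plan: by the descent noted above we may replace $X$ by any closed subspace, and since (by a classical theorem) a non-reflexive Banach space contains a non-reflexive subspace with a Schauder basis, we may assume toward a contradiction that $X$ is separable, non-reflexive, with a basis. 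First exclude $\ell_{1}$: if $\ell_{1}\hookrightarrow X$, then by (i) the canonical embedding $\kappa_{\ell_{1}}\colon\ell_{1}\to\ell_{1}^{**}$ extends to some $T\colon X\to\ell_{1}^{**}$, and composing with the norm-one projection of $\ell_{1}^{**}=M(\beta\mathbb{N})$ onto its purely atomic part $\ell_{1}$ gives a bounded projection of $X$ onto $\ell_{1}$, so $\ell_{1}$ inherits (i); but $\ell_{1}$ fails (i), since the annihilator in $\ell_{\infty}=\ell_{1}^{*}$ of a subspace $W\subseteq\ell_{1}$ with $\ell_{1}/W\cong c_{0}$ is a weak${}^{\ast}$-closed subspace of $\ell_{\infty}$ isomorphic to $\ell_{1}$, hence non-injective (being separable and infinite dimensional, cf.\ Remark \ref{r5}(b)) and therefore not complemented in the injective space $\ell_{\infty}$. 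A parallel argument, using Sobczyk's theorem (Theorem \ref{t4}) to complement $c_{0}$ in the now-separable $X$, excludes $c_{0}$. What is left is the James-type (quasi-reflexive) case, and here one uses a James sequence in the non-reflexive space $X$ to build, on the closed span $Z$ of that sequence, an explicit bilinear form whose extendibility to $X$ would force the James sequence to converge weakly — contrary to its construction.

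The main obstacle is exactly this reflexivity step. The reformulation of (i) gives only that $\kappa_{Z}$ extends \emph{into the bidual} $Z^{**}$, equivalently that $Z^{\perp}$ is complemented in $X^{*}$, and this is genuinely weaker than $Z$ being complemented in $X$: for the inclusion $c_{0}\subseteq\ell_{\infty}$, the annihilator $c_{0}^{\perp}$ is complemented in $\ell_{\infty}^{*}$ even though $c_{0}$ is not complemented in $\ell_{\infty}$. Turning the bidual-level splitting into an honest projection needs reflexivity, and — beyond the routine exclusion of $c_{0}$ and $\ell_{1}$ — extracting reflexivity of $X$ from (i) via the James-sequence construction is the technical heart of the argument; once reflexivity is available, the conclusion follows at once from Theorem \ref{t12}.
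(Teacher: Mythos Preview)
Your reformulation of (i) is correct and is in fact the whole proof: once you observe that ``$B$ extends from $Z\times Z^{*}$ to $X\times Z^{*}$'' is the same as ``the restriction map $r_{Z}\colon X^{*}\to Z^{*}$ admits a bounded linear section $\psi\colon Z^{*}\to X^{*}$'', you have produced, for every closed subspace $Z$ of $X$, a continuous linear extension operator $\psi\colon Z^{*}\to X^{*}$ with $\psi(g)|_{Z}=g$. That is precisely the hypothesis of Theorem~\ref{te5}, and you are done. The paper's own argument lands at exactly this point and then invokes \cite[Theorem~2.1]{fu} (which is Theorem~\ref{te5}); it reaches $\psi$ by first applying the Aron--Berner extension and a Davie--Gamelin approximation to build a projection $P\colon X^{**}\to Z^{**}$ and then descending via the Dixmier projection, but a short computation shows the resulting map is nothing other than $\psi(\varphi)(x)=\gamma(x,\varphi)$, i.e.\ your direct section. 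So your route to $\psi$ is actually cleaner than the paper's.

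The genuine gap is the detour you take \emph{after} obtaining this reformulation. Having $\psi$ in hand, there is no need to prove reflexivity of $X$ at all, and your attempt to do so is incomplete: the ``James-type'' step is only gestured at, and even the exclusion of $c_{0}$ is not fully argued (you show $c_{0}$ would inherit (i), but never verify that $c_{0}$ fails (i), i.e.\ that some weak${}^{*}$-closed subspace of $\ell_{1}$ is uncomplemented). Drop the entire reflexivity programme and simply cite Theorem~\ref{te5} immediately after your operator reformulation; that is both the paper's strategy and the shortest path.
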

The above theorem is a recent result of the author \cite{mas2} which is the isomorphic analogue of an old result of T.L.Hayden \cite{h}. The proof is postponed to Section \ref{se}.

The next question deals with the ‘single space’ analogue of $(\ast)$. We state it as a problem.
\begin{problem}\label{p18}
Given a Banach space $X$ such that for all subspaces $Y\subset X$ and $\forall g\in L(Y,\ell_2 ),$  $\exists ~f\in 
L(X,\ell_2 )$~  s.t.~$f|_{Y}=g$. 
Does it follow that $X$ has type $2$? Does the assertion follow at least under the stronger hypothesis that the stated condition is assumed to hold for all 
cotype $2$ Banach spaces in place of $\ell_2$?
\end{problem}
{\bf(c).} When suitably formulated in the setting of Frechet spaces $X$, it turns out that in most of the cases, there exist 
infinite dimensional Frechet spaces in which it is possible to salvage a given (FD)- property. It also turns out that, at least in most cases 
of interest, the class of Frechet spaces in which this holds coincides with the class of nuclear spaces in the sense of Grothendieck. 
The fact that a Banach space can never be nuclear unless it is finite dimensional shows that the nuclear analogue of an (FD)-property provides a
 maximal infinite dimensional setting in which the given (FD)-property holds. Rather than list a whole lot of examples of (FD)-properties which 
characterize nuclearity in Frechet spaces, we shall settle for the following example involving the Frechet space analogue of the Dvoretzky-Rogers 
property which holds exactly when the underlying Frechet space is nuclear.
\begin{problem}\label{p19}
 For a Frechet space $X$, TFAE:
 \begin{enumerate}
\item[ (a)]	$\ell_2 \{X\}=\ell_2 [X]$
\item[ (b)]	$X$ is nuclear.
 \end{enumerate}
\end{problem}
Continuing along these lines, we conclude with the following problem involving the Frechet analogue of the (strengthened) HBEP $(\ast\ast)$:
\begin{problem}\label{p20}
 Given a Frechet space $X$, does it follow that $X$ is nuclear if the following holds:\\
$(\spadesuit)$ $\forall$ ~ subspaces ~$ Y\subset X,~ \forall g\in L(Y,\ell_2),~\exists~ f\in L(X,\ell_2)$ ~ 
s.t.~$f|_{Y}=g ~\& ~\forall$~ superspaces~ $W\supseteq X, ~\forall g\in L(X,\ell_2),~\exists ~f\in L(W,\ell_2)$~  
s.t.~ $f|_{X}=g.$
\end{problem}

The problem is open even under the stronger hypothesis of $(\ast)$ with the target space $\ell_2$ being replaced by the class of all Banach spaces. 
Let us also note in passing that $(\spadesuit)$ holds for all nuclear spaces.

\section{Hilbert spaces determined via Hahn Banach phenomena}\label{se}
We have seen that Hilbert spaces arise as the natural class of Banach spaces for which certain variants of the Hahn Banach property hold 
(Theorem \ref{t12} and Theorem \ref{t17b}). There are many more examples of such phenomena involving the Hahn Banach property which can be used to
 characterise Hilbert spaces. Before we proceed to discuss these characterisations, we shall pause to provide proofs of the above quoted theorems
 as promised in Section \ref{sd}.
\begin{proof}[Proof of Theorem \textnormal{\ref{t12}}]
Whereas (ii)$\Rightarrow$(i) is a straightforward consequence of the projection 
theorem in Hilbert space, the proof of (i)$\Rightarrow$(ii) is much deeper and depends upon the famous Lindenstrauss-Tzafiriri 
theorem (see \cite[Chapter 12]{ak}) characterising Hilbert spaces isomorphically as those Banach spaces in which the projection 
theorem holds for closed subspaces: for each closed subspace M of $X$, there exists a continuous linear projection 
of $X$ onto M.
 It is easily seen that this latter property is a rewording of the statement in (i). The crucial component of the
 equivalence of this latter property with $X$ being a Hilbert space involves  the proof of the assertion that it 
is actually a ‘local property’ 
in the following sense.
\end{proof}
\begin{lemma}\label{5.1}
For a Banach space $X$, TFAE:
\begin{enumerate}
\item[(i)]	For each closed subspace M of $X$, there exists a continuous projection of $X$ onto M.
\item[(ii)] There exists $c>1$ such that for each finite dimensional subspace M of $X$, there
 exits a continuous projection P of $X$ onto M such that $\| P\| \leq c$.
\end{enumerate}
\end{lemma}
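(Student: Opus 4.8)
The plan is to prove the nontrivial implication (i)$\Rightarrow$(ii), since (ii)$\Rightarrow$(i) will follow from a standard compactness/ultraproduct argument (or directly from the Lindenstrauss--Tzafriri machinery) and is the easier direction. For (i)$\Rightarrow$(ii) I would argue by contradiction: suppose (i) holds but (ii) fails. Then for every $c>1$ — in particular for each positive integer $n$ — there is a finite dimensional subspace $M_n \subset X$ such that \emph{every} continuous projection $P$ of $X$ onto $M_n$ satisfies $\|P\| > n$. The idea is to assemble the bad subspaces $M_n$ into a single \emph{closed} subspace $M$ of $X$ onto which no bounded projection can exist, contradicting (i).

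The first step is the assembly. I would like to realize a subspace isometric (or uniformly isomorphic) to a suitable $c_0$- or $\ell_2$-type direct sum $\big(\sum_n M_n\big)$ sitting inside $X$ in such a way that a bounded projection of $X$ onto the whole sum would restrict, after composing with the natural coordinate projections, to uniformly bounded projections of $X$ onto each $M_n$. The cleanest way to set this up is to pass to an ultrapower: let $\mathcal U$ be a free ultrafilter on $\mathbb N$ and consider $X_{\mathcal U}$, inside which the sequence $(M_n)$ determines a finite dimensional (in fact finite dimensional of bounded dimension need not hold, so one works with the subspace they generate in the ultrapower) subspace $M_{\mathcal U}$. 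Property (i), being a statement about all closed subspaces, is inherited by $X_{\mathcal U}$ \emph{only if} one first upgrades it to the local form — which is precisely the circularity one must avoid. So instead I would keep the argument inside $X$: choose the $M_n$ to lie in "independent" parts of $X$ (using that $X$ is infinite dimensional, by a Mazur-type construction of a basic sequence one can arrange the $M_n$ to span a subspace on which the natural projections onto $\bigoplus_{k\le n} M_k$ are, say, $2$-bounded), let $M$ be the closed linear span of $\bigcup_n M_n$, and invoke (i) to get a bounded projection $P : X \to M$ with $\|P\| = \lambda < \infty$.

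The final step is to extract the contradiction. Compose $P$ with a bounded projection $Q_n : M \to M_n$ (available, with norm bounded independent of $n$, by the basic-sequence construction) to obtain a projection $Q_n \circ P : X \to M_n$ with $\|Q_n \circ P\| \le \lambda \sup_n \|Q_n\| =: K < \infty$, uniformly in $n$. For $n > K$ this contradicts the defining property of $M_n$. Hence (ii) must hold. \textbf{The main obstacle} is the assembly step: one must build the $M_n$ inside $X$ so that the ambient projection $P$ genuinely controls the individual projections onto each $M_n$ with a \emph{uniform} bound, and this requires a careful basic-sequence (or finite-dimensional decomposition) construction exploiting only that $X$ is infinite dimensional — managing the "interference" between the blocks $M_n$ is the delicate part, and it is exactly here that the standard proof borrows the block-basis / gliding-hump techniques from the local theory of Banach spaces.
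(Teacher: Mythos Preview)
The paper does not actually prove this lemma; it is stated as a known ingredient of the Lindenstrauss--Tzafriri theorem, with a pointer to \cite{ak}, Chapter~12, for the details. So there is no in-paper argument to compare against, and your sketch has to be judged on its own.

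Your attack on (i)$\Rightarrow$(ii) has a genuine gap precisely at the point you identify as the obstacle. The subspaces $M_n$ are handed to you by the failure of (ii): each $M_n$ is a \emph{specific} finite-dimensional subspace of $X$ with the property that every projection of $X$ onto it has norm exceeding $n$. That property is positional, not intrinsic --- it depends on how $M_n$ sits inside $X$, not on its isomorphism type. A Mazur-type basic-sequence construction lets you \emph{find} finite-dimensional blocks in independent position, but it does not let you \emph{move} a prescribed subspace; and an isometric copy of $M_n$ placed elsewhere in $X$ has no reason whatsoever to inherit a large projection constant. So the sentence ``choose the $M_n$ to lie in independent parts of $X$'' cannot be executed, and once that fails the whole contradiction collapses: without uniformly bounded coordinate projections $Q_n:M\to M_n$ you cannot bound $\|Q_n\circ P\|$ independently of $n$. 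The actual argument in the literature is substantially more delicate than a direct gliding hump on the $M_n$.

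A second issue: you dismiss (ii)$\Rightarrow$(i) as the easy direction, to be handled by ``the Lindenstrauss--Tzafriri machinery''. In the context of this paper that is circular --- Lemma~\ref{5.1} is being quoted exactly as the first step \emph{toward} that theorem. The honest route for (ii)$\Rightarrow$(i) is the one the paper indicates immediately after the lemma: (ii) forces a uniform bound $d(M,\ell_2^{\dim M})\le f(c)$ on all finite-dimensional $M\subset X$, whence $X$ is Hilbertian by \cite{j}, and then (i) follows from the projection theorem. An independent ``compactness/ultraproduct'' proof of (ii)$\Rightarrow$(i) that avoids Hilbert space is not standard and would itself require justification.
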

It then turns out that there exists $f(c)>0$ such that for each finite dimensional subspace M of X, $d(M,\ell_2^{\dim M})\leq f(c)$, where $d(.,.)$ 
denotes the Banach Mazur distance. By \cite{j}, this holds precisely when X is (isomorphic to a) Hilbert space.
\begin{proof}[Proof of Theorem \textnormal{\ref{t17b}}]
 As in the case of Theorem \ref{t12}, the implication (ii) $\Rightarrow$ (i) is a 
consequence of the existence of projections on closed 
subspaces of a Hilbert space. To show that (i)  $\Rightarrow$ (ii), let $Y$ be a closed subspace of $X$ and 
consider the bilinear form b on  $Z\times Z^\ast$ 
given by: $b(y,y^\ast)= y^\ast (y)$.  Let $\gamma$ be an extended bilinear form on $X\times Z^\ast$ guaranteed by (i). By the Aron-Berner extension 
theorem \cite{abe}, $\gamma$ has a further extension to a bilinear form on  $X^{\ast\ast}\times Z^{\ast\ast\ast}$. Denote 
by $\pi$, the restriction of this bilinear
form on $X^{\ast\ast}\times Z^{\ast}$ and consider the map $P: X^{\ast\ast} \rightarrow Z^{\ast\ast}$ defined by
$$ P(x^{\ast\ast})(y^{\ast})=\pi(x^{\ast\ast},y^{\ast}), y^{\ast} \in Z^{\ast}.$$
It is clear that P is continuous and linear. To see that P is actually a projection, let  $x^{\ast\ast}\in Z^{\ast\ast}$ and fix $y^{\ast} \in Z^{\ast}$.  
An application of Davie-Gamelin theorem \cite{dg} applied to the pair $(x^{\ast\ast},y^{\ast})$ yields that there exists 
 a net $(x_\alpha)\subset Z$ such that $(x_\alpha)$ converges weakly
 $to~ x^{\ast\ast}~ in~ Z^{\ast\ast}~ \text{and}~ \pi(x_\alpha,y^\ast)\rightarrow \pi(x^{\ast\ast},y^\ast).$ This gives
$$ P(x^{\ast\ast})(y^\ast)=\pi(x^{\ast\ast},y^\ast)=\lim\pi(x_\alpha,y^\ast)=\lim ⁡b(x_\alpha,y^\ast)=\lim y^\ast (x_\alpha )=x^{\ast\ast} (y^\ast).$$
Thus $P(x^{\ast\ast})=x^{\ast\ast}$, which means that P is a projection. Finally, let $Q: X^{\ast\ast\ast}\rightarrow X^\ast$ be 
the Dixmier projection defined by $Q(x^{\ast\ast\ast})(x)=x^{\ast\ast\ast}(\hat{x})$ and compose it with the 
adjoint map
 $P^\ast: Z^{\ast\ast\ast}\rightarrow X^{\ast\ast\ast}$ to get a map from  $Z^{\ast\ast\ast}$ onto  $X^\ast$ and let $\psi: Z^\ast\rightarrow X^\ast$ 
be the restriction of this map to $Z^\ast$. It is easily seen that $\psi$ is continuous and linear. An application 
of \cite[Theorem 2.1]{fu} then completes the proof. 
(See also Theorem \ref{te5}).
\end{proof}
Our next Hilbert space characterisation is motivated by the set of equivalent conditions ensuring the uniqueness of the Hahn Banach extension.
 As was noted in Theorem \ref{tc14}, this happens exactly when the dual space is strictly convex which can also be shown to be equivalent to the so called 
2-ball sequence property defined below.
\begin{definition}\label{ed1}
\textnormal{Given a Banach space $X$ and $k\in \mathbb{N}$, a subspace $Y$ of $X$ is said to have the {\it $k$-ball 
sequence property} if for all sequences
 of balls $\bigg\{B_n^{(i)}\bigg\}_{(i=1)}^k$ in $X$, $B_{n}^{(i)}=B(x_n^i,r_n^i)$ such that $B_n^{(i)} \subset 
B_{n+1}^{(i)},r_n^i\rightarrow\infty,x_{n+1}^i-x_n^i\in Y,~\forall ~i=1,2,\ldots, k$  and 
$$  \bigcap_{i=1}^k\bigcup_{n=1}^\infty B_n^{(i)}\neq\phi,$$
it follows that 
$$ Y\bigcap\Big(\bigcap_{i=1}^k\bigcup_{n=1}^\infty B_n^{(i)}\Big)\neq\phi.$$
We shall say that X has the k-ball sequence property if each (closed) subspace of $X$ has it.
}
\end{definition}
\begin{remark}\label{er1}
It is easily seen that k+1-ball sequence property implies k-ball sequence property. More importantly, 3-ball sequence property 
is equivalent to k-ball sequence property for all $k>3$. Furthermore, there are examples of subspaces with 2-ball sequence property but lacking
the 3-ball sequence property.
\end{remark}
 As a strengthening of the uniqueness property of the HB-extensions ({\it U-property}, for short), we introduce the notion of Banach spaces having the 
SU-property (strong uniqueness property) defined below.
\begin{definition}\label{ed3}
 A Banach space $X$ is said to have the SU-property if it has the U-property and each closed 
subspace $Y$ of $X$ is an ideal in $X$, i.e.,
 $Y^\bot$ (the annihilator of $Y$ in $X^\ast$) = ker(P) for some norm-one projection P in $X^\ast$.

\end{definition}
\begin{theorem}[\cite{op}]\label{te4}
For a Banach space $X$, TFAE:
\begin{enumerate}
\item[(i)]	$X$ has SU.
\item[(ii)] $X$ has $3$-ball sequence property.
\item[(iii)] $X$ has k-ball sequence property for all $k>3$.
\item[(iv)] $X$ is a Hilbert space.
\end{enumerate}
\end{theorem}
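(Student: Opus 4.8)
The plan is to run the cycle (iv) $\Rightarrow$ (i) $\Rightarrow$ (ii) $\Rightarrow$ (iv), the equivalence (ii) $\Leftrightarrow$ (iii) being already recorded in Remark \ref{er1}. Of these links, (iv) $\Rightarrow$ (i) is elementary Hilbert-space geometry, (i) $\Rightarrow$ (ii) is a duality translation of the SU data into the language of nested balls, and (ii) $\Rightarrow$ (iv) is where one must manufacture norm-one projections and feed them into the local theory of Banach spaces. The last link will be the principal obstacle.

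For (iv) $\Rightarrow$ (i): let $X = H$ be a Hilbert space, $Y$ a closed subspace and $y^{\ast} \in Y^{\ast}$. The Riesz theorem in $Y$ gives a unique $v \in Y$ with $y^{\ast} = \langle \cdot , v \rangle |_{Y}$ and $\| v \| = \| y^{\ast} \|$, so $\langle \cdot , v \rangle \in H^{\ast}$ is a norm-preserving extension; it is the only one, for if $\langle \cdot , w \rangle$ also extends $y^{\ast}$ with $\| w \| = \| v \|$ then $w - v \perp Y$, whence $\| w \|^{2} = \| v \|^{2} + \| w - v \|^{2}$ forces $w = v$. Thus $H$ has the U-property (consistently with Theorem \ref{tc14}, $H^{\ast}$ being strictly convex). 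Moreover $y^{\ast} \mapsto \langle \cdot , v \rangle$ is linear of norm one — it is induced by the adjoint of the orthogonal projection $P_{Y}$ — and, under the Riesz identification of $H^{\ast}$ with $H$, the annihilator $Y^{\bot} \subset H^{\ast}$ corresponds to $\ker P_{Y}$, the kernel of a norm-one projection. Hence every closed subspace of $H$ is an ideal, and $H$ has SU.

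For (i) $\Rightarrow$ (ii): first unpack SU. The U-property makes each $y^{\ast} \in Y^{\ast}$ have a unique norm-preserving extension, while the ideal property supplies some norm-one linear operator $Y^{\ast} \to X^{\ast}$ extending functionals; by uniqueness these coincide, so for every closed subspace $Y$ the Hahn--Banach extension operator $E_{Y} : Y^{\ast} \to X^{\ast}$ is linear with $\| E_{Y} \| = 1$. I would then match this against the $3$-ball sequence property through the standard dual reading of a point belonging to an increasing union of balls: $z \in \bigcup_{n} B(x_{n}, r_{n})$ fails exactly when a sequence of norm-one functionals separates $z$ from every $B(x_{n}, r_{n})$, and the hypotheses $r_{n}^{i} \to \infty$ and $x_{n+1}^{i} - x_{n}^{i} \in Y$ are arranged so that the sole obstruction to replacing a common point $z \in \bigcap_{i} \bigcup_{n} B_{n}^{(i)}$ by one lying in $Y$ is the discrepancy between the action of functionals on $Y$ and on $X$ — which is exactly what the norm-one linear operator $E_{Y}$ controls. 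This is the step that demands the most bookkeeping; it runs parallel to the classical (Phelps-type) description of U-subspaces by a $2$-ball condition, the presence of the third ball being precisely what encodes the ideal — and not merely the uniqueness — half of SU. The requisite estimates belong to the circle of ideas of \cite{br1} and \cite{op}.

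For (ii) $\Rightarrow$ (iv): by (ii) $\Leftrightarrow$ (iii) one has the $k$-ball sequence property for every $k$, and the goal is that every closed subspace $M$ of $X$ be the range of a norm-one projection. Reading the previous paragraph backwards yields, for each closed $M$, a norm-one linear extension operator $E_{M} : M^{\ast} \to X^{\ast}$; one must then show — and this is the delicate point, requiring the reflexivity of $X$ and the weak$^{\ast}$-continuity of $E_{M}$ to be established en route — that $E_{M}$ is the adjoint of a norm-one projection of $X$ onto $M$. Once every closed subspace of $X$ admits a norm-one projection, Kakutani's theorem (the isometric ancestor of the Lindenstrauss--Tzafriri theorem invoked in the proof of Theorem \ref{t12} and Lemma \ref{5.1}) forces $X$ to be isometrically a Hilbert space when $\dim X \geq 3$; the cases $\dim X \leq 2$ are immediate, since there SU already makes $X$ smooth and strictly convex with every line $1$-complemented, hence Euclidean. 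The hard part throughout is this final implication: extracting genuine $1$-complementation of all closed subspaces — not merely local $1$-complementation — from the ball hypotheses, and only then bringing the Kakutani/Lindenstrauss--Tzafriri machinery to bear.
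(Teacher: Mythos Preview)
The paper does not supply a proof of Theorem~\ref{te4}; it is merely stated with attribution to Oja and P\~oldvere \cite{op}, so there is no argument here against which to compare your proposal.

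That said, a brief comment on your outline. Your scheme (iv) $\Rightarrow$ (i) $\Rightarrow$ (ii) $\Rightarrow$ (iv) with (ii) $\Leftrightarrow$ (iii) from Remark~\ref{er1} is the natural one, and your treatment of (iv) $\Rightarrow$ (i) is complete. The step (i) $\Rightarrow$ (ii), however, is not really argued: you correctly extract from SU a norm-one linear extension operator $E_Y:Y^\ast\to X^\ast$ for every closed $Y$, but the passage from this to the $3$-ball sequence condition is left as an allusion to ``Phelps-type'' duality and to \cite{op}, \cite{br1}. That is precisely the technical heart of the Oja--P\~oldvere paper, and it is not something one can wave through. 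Similarly, in (ii) $\Rightarrow$ (iv) you identify the genuine obstacle --- upgrading the norm-one extension operator $E_M$ to the adjoint of an honest norm-one projection of $X$ onto $M$ --- but you do not resolve it; reflexivity of $X$ is not available a priori, and the weak$^\ast$-continuity of $E_M$ must be \emph{proved}, not assumed. Once that is done, Kakutani's theorem is indeed the right finishing tool. In short, your roadmap is sound, but both nontrivial implications are left at the level of intentions rather than arguments; for the details you would in effect be reproducing \cite{op}.
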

The next result addresses the issue of selecting a Hahn Banach extension in a manner which gives rise to a linear and continuous 
(extension) operator from $Y^\ast$ into $X^\ast$ for each (closed) subspace $Y$ of $X$. More precisely, we have the following theorem which was used in the proof of Theorem \ref{t17b}.
\begin{theorem}\label{te5}
For a Banach space $X$, TFAE:
\begin{enumerate}
\item[(i)]	For each subspace $Y$ of $X$, there exists a continuous linear map $\psi: Y^\ast \rightarrow X^\ast$ such 
that $\psi(g)|_{Y}=g~ for~ each~ g\in Y^\ast.$
\item[(ii)] X is a Hilbert space.
\end{enumerate}
\end{theorem}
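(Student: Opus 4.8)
The plan is to prove the two implications separately. The implication (ii)$\Rightarrow$(i) is supplied directly by orthogonal projections: assuming $X$ is a Hilbert space, let $Y\subset X$ be a subspace with closure $\overline{Y}$ and let $P\colon X\to\overline{Y}$ be the orthogonal projection, $\|P\|=1$. Every $g\in Y^{\ast}$ extends uniquely by continuity to $\bar g\in\overline{Y}^{\ast}$ with $\|\bar g\|=\|g\|$, and the assignment $\psi(g):=\bar g\circ P\in X^{\ast}$ is linear in $g$, has $\|\psi(g)\|\le\|g\|$, and satisfies $\psi(g)|_{Y}=\bar g|_{Y}=g$. Thus $\psi\colon Y^{\ast}\to X^{\ast}$ is a norm-one continuous linear extension operator, which is (i). The substance is the converse (i)$\Rightarrow$(ii), which goes through a duality reformulation followed by a local analysis of $X^{\ast}$ that feeds into the machinery already used for Theorem \ref{t12}.

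First reformulate (i). For a closed subspace $Y\subset X$ let $R_{Y}\colon X^{\ast}\to Y^{\ast}$ be the restriction map, i.e. the adjoint of the inclusion $Y\hookrightarrow X$; by the classical Hahn--Banach theorem $R_{Y}$ is a metric surjection with $\ker R_{Y}=Y^{\bot}$. The operator $\psi=\psi_{Y}$ furnished by (i) is precisely a bounded linear right inverse of $R_{Y}$, so $E_{Y}:=\psi_{Y}R_{Y}$ is a bounded idempotent on $X^{\ast}$ with $\ker E_{Y}=Y^{\bot}$; equivalently $I-E_{Y}$ is a bounded linear projection of $X^{\ast}$ onto $Y^{\bot}$ of norm at most $1+\|\psi_{Y}\|$. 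As $Y$ runs over the closed subspaces of $X$, the annihilators $Y^{\bot}$ run over exactly the $w^{\ast}$-closed subspaces of $X^{\ast}$, so (i) amounts to the assertion that \emph{every $w^{\ast}$-closed subspace of $X^{\ast}$ is complemented in $X^{\ast}$}. In particular every finite-dimensional subspace $F\subset X^{\ast}$ is complemented in $X^{\ast}$, since such an $F$ is $w^{\ast}$-closed: $F=(F_{\bot})^{\bot}$ with $F_{\bot}\subset X$ of finite codimension $\dim F$.

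The crux is to upgrade this to a \emph{uniform} bound: a constant $c\ge1$ such that every finite-dimensional $F\subset X^{\ast}$ admits a projection $P\colon X^{\ast}\to F$ with $\|P\|\le c$. This step cannot be skipped --- every finite-dimensional subspace of $\ell_{1}$ is (trivially) complemented while $\ell_{1}$ is not Hilbertian --- and I expect it to be the main obstacle. It is obtained by a Baire-category/contradiction argument in the spirit of the proof of Lemma \ref{5.1}: were the projection constants of the finite-dimensional subspaces of $X^{\ast}$ unbounded, then, since the minimal norm of an extension operator for the finite-codimensional subspace $F_{\bot}$ dominates the projection constant of $F$, one could --- after a gliding-hump perturbation arranging the offending finite-dimensional subspaces to be suitably independent and their closed span $w^{\ast}$-closed --- manufacture a single $w^{\ast}$-closed subspace of $X^{\ast}$, i.e. an annihilator $Y^{\bot}$, carrying no bounded projection, hence a subspace $Y\subset X$ carrying no continuous linear extension operator $\psi_{Y}$, contradicting (i). Alternatively, the implication may simply be quoted from \cite[Theorem 2.1]{fu}, of which Theorem \ref{te5} is in essence a restatement.

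Granting the uniform constant $c$, the argument concludes exactly as recorded after Lemma \ref{5.1}, applied now to $X^{\ast}$ in place of $X$: there is $f(c)>0$ with $d\bigl(F,\ell_{2}^{\dim F}\bigr)\le f(c)$ for every finite-dimensional $F\subset X^{\ast}$, so by \cite{j} the dual $X^{\ast}$ is isomorphic to a Hilbert space. Then $X^{\ast}$, and hence $X$, is reflexive, and $X\cong X^{\ast\ast}=(X^{\ast})^{\ast}$ is isomorphic to the dual of a Hilbert space; that is, $X$ is (isomorphically) a Hilbert space, completing the proof.
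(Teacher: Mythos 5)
Your implication (ii)$\Rightarrow$(i) is fine and is exactly the paper's (orthogonal projection onto $\overline{Y}$ composed with the canonical extension of $g$). Your reformulation of (i) as the statement that every $w^{\ast}$-closed subspace $Y^{\bot}$ of $X^{\ast}$ is complemented, via the idempotent $\psi_{Y}R_{Y}$, is also correct, and your endgame (uniform complementation of finite-dimensional subspaces of $X^{\ast}$ $\Rightarrow$ uniform Euclidean distance by the argument recorded after Lemma \ref{5.1} $\Rightarrow$ $X^{\ast}$, hence $X$, Hilbertian by \cite{j}) is sound. Note, though, that the paper takes a slightly more direct dual route: it quotes from \cite{fu} the locality of condition (i) itself, namely a single constant $c$ with extension operators $\psi\colon Y^{\ast}\to X^{\ast}$, $\|\psi\|\le c$, for all \emph{finite-dimensional} $Y\subset X$, and then for finite-dimensional $Y\subset Z\subset X$ restricts $\psi^{\ast}\colon X^{\ast\ast}\to Y^{\ast\ast}=Y$ to $Z$ to get projections $P\colon Z\to Y$ with $\|P\|\le c$; this feeds straight into the discussion following Lemma \ref{5.1} and Joichi's theorem \cite{j}, giving $X$ Hilbertian without passing to $X^{\ast}$ or invoking reflexivity.

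The genuine gap is your uniformity step, which you yourself flag as the main obstacle: the Baire-category/gliding-hump sketch is not a proof. To run that contradiction you would have to (a) arrange that the closed span of the offending finite-dimensional subspaces of $X^{\ast}$ is $w^{\ast}$-closed, so that it is an annihilator $Y^{\bot}$ and condition (i) applies to it at all — infinite closed spans of $w^{\ast}$-closed pieces need not be $w^{\ast}$-closed, and this constraint does not appear in the classical Lindenstrauss--Tzafriri locality lemma you are imitating — and (b) show that a bounded projection onto that span would induce uniformly bounded projections onto the individual pieces, which is essentially the uniformity you are trying to establish and requires a genuine almost-disjointness construction, not just "suitable independence". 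Neither point is addressed, so as written the implication (i)$\Rightarrow$(ii) is not proved. Your fallback of quoting \cite[Theorem 2.1]{fu} does close the gap, and is in substance what the paper does (it cites \cite{fu} for precisely this locality statement); but then the honest description of your argument is "cite \cite{fu} and finish by local theory", with the dual detour through $X^{\ast}$ being an unnecessary complication compared with the paper's direct passage from extension operators on finite-dimensional $Y\subset X$ to projections $Z\to Y$ inside $X$.
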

\begin{remark}\label{er6}
Interestingly enough, Theorem \ref{te5} also has a polynomial version which asserts that the above conditions are also equivalent 
to the existence of a continuous linear (extension) operator from $P^n (Y)$  into $P^n (X),n>1$, where the symbols denote the spaces of 
(scalar-valued) polynomials on $Y$ and $X$, respectively. Now the proof of 
(i) $\Rightarrow$ (ii) depends upon the fact that the 
condition (i) of Theorem \ref{te5} is a local 
property in the following sense:

 $(\ast)$There exists $c>0$ such that for each finite dimensional subspace $Y$ of $X$, there exists a continuous 
linear extension map
 $\psi: Y^\ast \rightarrow X^\ast $~ with~$ \| \psi\|\leq c$ (See \cite{fu} for details).
\end{remark}
Indeed, consider finite dimensional  subspaces $Y\subset Z\subset X$ and let $\psi: Y^\ast\rightarrow 
X^\ast$ be a continuous linear 
extension map with $\| \psi\|\leq c$. Taking conjugates gives a map  $\psi^\ast: X^{\ast\ast}\rightarrow Y^{\ast\ast}$ 
which when restricted to $Z$ produces a projection $P:Z\rightarrow Y$ with
 $$ \| P\|\leq\|\psi^\ast\|=\|\psi\|\leq c $$

From the  discussion following Lemma \ref{5.1}, it follows that  there exists $f(c) > 0,$ independent of $Z\subset X 
$ such that  $d(Z,\ell_2^{\dim Z})\leq f(c)$. Since $Z$ was chosen arbitrarily, we conclude that  $X$ is 
isomorphic to a Hilbert space .\\
 Before we discuss our final result involving a characterisation of Hilbert spaces in terms of Hahn Banach extensions, let us recall Question \ref{q15} and
 the comments following immediately thereafter, asserting the possibility of such a characterisation in terms of extendibility of Lipschitz maps. 
We conclude by briefly indicating the ingredients of a proof of (a special case of) the following statement which says that in the case that the 
target space is chosen to be $\ell_2$, Lipschitz maps on arbitrary subsets can be extended to a Lipchitz map on 
the whole space precisely when the underlying space is Hilbertian.
\begin{theorem}\label{te7}
For a Banach space $X$, TFAE:
\begin{enumerate}
\item[(i)]	Given a subset A of $X$ and a Lipschitz map $f:A\rightarrow \ell_2$, there exists a Lipschitz map 
$g:X\rightarrow\ell_2$ such that $f=g$ on A.
\item[(ii)] $X$ is a Hilbert space.
\end{enumerate}
\end{theorem}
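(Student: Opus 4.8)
\medskip
\noindent\textbf{Proof strategy.}
For (ii)$\Rightarrow$(i) the plan is to invoke \emph{Kirszbraun's extension theorem}: a Lipschitz map from a subset of one Hilbert space into another extends, with the same Lipschitz constant, to the whole space. If $X$ is merely isomorphic to a Hilbert space $H$, one transports the problem through an isomorphism $S\colon X\to H$ --- given Lipschitz $f\colon A\subseteq X\to\ell_2$, apply Kirszbraun to $f\circ S^{-1}$ on $S(A)$ and pull the extension back --- so the Lipschitz constant is distorted only by $\|S\|\,\|S^{-1}\|$, which is harmless. Thus almost all the work lies in (i)$\Rightarrow$(ii), and I would first flag that, in contrast with Theorems \ref{t11} and \ref{t14}, one should \emph{not} try to reduce to a linear extension statement: a Lipschitz extension of a linear map on a subspace cannot in general be ``relinearised'' while keeping its prescribed boundary values (the radial derivative of a Lipschitz function need not be linear), and in any case the $\ell_2$-valued \emph{linear} extension property is strictly weaker, yielding only weak type $2$ by Theorem \ref{t14}. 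The surplus strength of Theorem \ref{te7}, as already hinted in Question \ref{q15}, comes precisely from testing (i) on \emph{curved} subsets of $X$.

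For (i)$\Rightarrow$(ii), the first step will be to uniformise the hypothesis. After a routine reduction of extendability to finite configurations (using weak compactness of balls in $\ell_2$), a disjointification argument --- place mutually very-far-apart translates $p_n+A_n$ of putative bad configurations inside $X$, and send them into far-apart pieces $q_n+f_n(A_n)$ of $\ell_2$ --- upgrades (i) to the existence of a single constant $K$ with the property that every Lipschitz $f\colon A\subseteq X\to\ell_2$ admits an extension to $X$ of Lipschitz constant at most $K\,\mathrm{Lip}(f)$. (The case $\dim X<\infty$ needs no argument, since then $X$ is automatically isomorphic to $\ell_2^{\dim X}$.)

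The second step is to run the proof of Kirszbraun's theorem \emph{in reverse}. That proof is a Zorn's-lemma induction whose inductive step is a one-point extension: given $a_1,\dots,a_n$ in the domain, $b_1,\dots,b_n\in\ell_2$ with $\|b_i-b_j\|_2\le\|a_i-a_j\|$, and a further point $a_0$, one needs $b_0$ with $\|b_0-b_i\|_2\le\|a_0-a_i\|$ for all $i$ --- that is, the closed balls $\bar B\bigl(b_i,\|a_0-a_i\|\bigr)\subset\ell_2$, which are \emph{pairwise} intersecting by the triangle inequality, must have a common point. Since $\ell_2$ lacks the binary intersection property (Remark \ref{r5}(a)), this is a genuine constraint on the admissible radii $\|a_0-a_i\|$: in a Hilbert \emph{domain} the parallelogram law forces the intersection, but over a general $X$ it need not. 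Hence the uniform constant $K$ forces a quantitative Hilbertian-type inequality on every finite point configuration of $X$, which by a standard local-theory compactness argument converts into a uniform bound $\sup\{\,d(F,\ell_2^{\dim F})\colon F\subseteq X\ \text{finite-dimensional}\,\}<\infty$ on the Euclidean distortion of the finite-dimensional subspaces of $X$. As recorded in the discussion following Lemma \ref{5.1}, by \cite{j} this is exactly the statement that $X$ is isomorphic to a Hilbert space.

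The hard part, I expect, will be this last passage inside the second step: squeezing out of the ``uniformly solvable one-point $\ell_2$-extension problem over $X$'' not merely the type-$2$ side but the full parallelogram-law (cotype-$2$) side of Hilbertisability. One must also handle non-separable $X$, where differentiability tools are unavailable; this is presumably why it is natural to argue first in a \emph{special case} --- for instance $X$ separable, where a countable dense set reduces everything to countably many finite configurations --- and only afterwards remove the restriction by a finite-dimensional exhaustion.
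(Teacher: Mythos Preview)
Your treatment of (ii)$\Rightarrow$(i) via Kirszbraun matches the paper. For (i)$\Rightarrow$(ii) the paper takes a different and much more concrete route, and your proposal has a genuine gap precisely at the step you flag as ``the hard part''.

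The paper does not pass through Banach--Mazur distances or Joichi's criterion at all. Instead it localises the problem to \emph{three-point} domains: it shows that the conclusion already follows from the (formally weaker) hypothesis that every $1$-Lipschitz map on a three-point subset of $X$ extends to a $1$-Lipschitz map on any four-point superset (extension from two points to three is automatic in any pair of Banach spaces, via an explicit convex-combination formula). The analytic engine is \emph{Nordlander's inequality}: for any normed space with $\dim X\ge 2$ and $0<t<1$,
\[
\inf\{\|x+y\|:(x,y)\in S_t\}\ \le\ 2\sqrt{1-t^{2}}\ \le\ \sup\{\|x+y\|:(x,y)\in S_t\},
\]
where $S_t=\{(x,y)\in S_X\times S_X:\|x-y\|=2t\}$ (proved via Green's theorem). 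When $X$ is not an inner-product space one of these inequalities is strict for some $t$, and the offending pair $(x,y)$ is used to manufacture an explicit three-point configuration whose $1$-Lipschitz image in $\ell_2$ admits no $1$-Lipschitz extension to a fourth point. The whole argument lives in a two-dimensional subspace of $X$; no compactness, no local theory, no uniform projection constants.

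Your proposed passage ``the uniform constant $K$ forces a quantitative Hilbertian-type inequality on every finite point configuration of $X$, which by a standard local-theory compactness argument converts into a uniform bound on $d(F,\ell_2^{\dim F})$'' is not standard --- it is the entire content of the implication, and you have given no indication of what the inequality is or how it controls Banach--Mazur distance. The natural bridge from Lipschitz extension constants to linear projection constants is exactly the differentiation/averaging route you explicitly disavowed in your first paragraph; having ruled that out, you are left with no mechanism. Your uniformisation step is fine, but after it the efficient move is the paper's: test (i) on hand-built three-point sets coming from a Nordlander violation, rather than trying to extract $d(F,\ell_2^{\dim F})$ wholesale.
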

\begin{proof}[Proof of Theorem \textnormal{\ref{te7} (outline)}]
(a) The fact that (ii) implies (i) is (the infinite dimensional analogue of) an old result of Kirszbraum \cite{km} which 
says that $\mathbb{R}^m$-valued c-Lipschitz
 maps acting on subsets of Euclidean spaces can always be extended to a c-Lipschitz map on the whole space. The 
proof is straightforward for functions taking values in $\mathbb{R}$.
 Indeed, if A is a subset of $\mathbb{R}^n$ and $f:A\rightarrow \mathbb{R}$ is a c-Lipschitz map, then the map
$$ F(x)=\inf \{f(a)+c\mid x-a\mid:a\in A\}$$
defines a c-Lipschitz map on $\mathbb{R}^n$ which extends $f$. (It is clear that the same argument works for a metric space in place of $\mathbb{R}^n$- an important 
fact which 
is due originally to McShane). Further, the special case of Kirszbraum's theorem proved above applied to the 
co-ordinate functions gives a cL-Lipschitz extension
 of a given c-Lipschitz function $f:A\rightarrow \mathbb{R}^m (A\subset \mathbb{R}^n)$ where L is a constant depending upon m. (Precisely, $L=\sqrt{m}$). In effect,
 Kirszbraum's
original theorem is exactly the last statement modulo the assertion that the constant L appearing there is redundant! However, that is much harder to prove. Here, 
let us also point out that it is relatively easier to prove Kirszbraum's theorem for the special case involving 
1-Lipschitz mappings defined on convex domains in a 
Hilbert spaces and taking values in an arbitrary Banach space. The key idea involved in the proof of this statement is the existence of a Chebyscheff projection 
induced by a convex subset of a Hilbert space where it also turns out to be a contraction, i.e. a 1-Lipschitz map. (See Section \ref{sf} for full details of the proof 
of Kirszbraum's theorem in the infinite dimensional Hilbert space setting).\\
(b) Regarding the implication (i)$\Rightarrow$ (ii), it turns out that a stronger statement is true, namely that 
if (i) holds for some Banach space Z in place of $\ell_2$ which
 is merely assumed to be strictly convex, then both $X$ and $Z$ are Hilbert spaces! On the other hand, (ii) can be shown to hold even for arbitrary Banach space $Z$ in place of $\ell_2$ as long as $X$ is assumed to be strictly convex.\\
(c) The proof of (b) depends upon the following well known geometrical inequality which is due originally to Nordlander. (See \cite{bl}, Proposition \ref{p1} for a proof).

$(\ast)$ Given a normed space $X$ with $\dim X \geq 2 ~\text{and}~ 0<t<1$, then
$$ \inf\{\| x+y\|_{X};(x,y)\in S_{t}\}\leq 2\sqrt{1-t^2}\leq\sup\{\| x+y\|_{X};(x,y)\in S_{t}\}$$ where
$$ S_{t}=\{(x,y)\in S_X\times S_X:\| x+y\|_{X}=2t\}$$
(The proof is a nice application of Green's theorem of advanced calculus).\\
(d)	The conclusion in (ii) is achieved under the weaker assumption- which trivially follows from (i) - 
that 1-Lipchitz maps defined on three-point subsets of the domain can be extended to a 1-Lipschiz map on any four-point superset.
 Here, it is important to point out that the extension is always possible if it is sought to be effected from a 2-point set to a
 larger set having three points. Indeed, if $ A= \{x,y\}$  and if $u: A\rightarrow Y$ is a 1-Lipschitz map, then 
for $z\in X$ and for t defined by
$$ t=\min\Bigg\{1,\frac{\| z-y\|}{\| x-y\|}\Bigg\}$$
the formula $u(z)=tu(x)+(1-t)u(y)$ gives a 1-Lipschitz mapping from $\{x,y,z\}$ into $Y$, which obviously extends the given u.
\end{proof}
\begin{remark}
As seen in the proof of $(i)\Rightarrow (ii) $ above, the extendibility of a Lipschitz map from arbitrary subsets to the whole space $X$ can be equivalently described in terms of the extendibility of the given map from finite subsets of $X$ to the whole space $X.$ This statement may be looked upon as the non-linear analogue of Maurey's extension property (referred to in Remark \ref{r19} (iii)) as a `local' extension property in term of `uniform' extendibility of linear maps on finite dimensional subspaces of a Banach space and taking values in $\ell_2.$ 
\end{remark}
An analogous result involving a nonlinear analogue of Theorem \ref{t3} (i) is the following (see \cite[Propositions 1.2 and 1.4]{bs}).
\begin{theorem}\label{te8}
For a Banach space $X$, TFAE:
\begin{enumerate}
\item[(i)]	For every pair of metric space $Y$ and $Z$ s.t. $X\subset Z$, each $1$-Lipschitz map on $X$ into $Y$  can 
be extended to a $1$-Lipschitz map from $Z$ into $Y$.
\item[(ii)] $X$ is a $1$-absolute Lipschitz retract, i.e., every Banach space containing $X$ as a subspace admits a $1$-Lipschitz map which is equal to the identity map on $X$.
\item[(iii)] $X$ is $1$-injective.
\end{enumerate}
\end{theorem}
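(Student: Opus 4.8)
The plan is to run the implications $(iii)\Rightarrow(ii)\Rightarrow(iii)$ and $(i)\Leftrightarrow(ii)$, using the \emph{binary intersection property} as the bridge between the linear and the Lipschitz pictures. Two of these are immediate. For $(iii)\Rightarrow(ii)$: if $X$ is $1$-injective then, by the characterisation recorded just after Theorem \ref{t3}, every Banach space $W\supseteq X$ carries a norm-one linear projection onto $X$, and such a projection is in particular a $1$-Lipschitz retraction, so $X$ is a $1$-absolute Lipschitz retract. For $(i)\Rightarrow(ii)$: apply $(i)$ with $Y=X$, with $f=\mathrm{id}_X$ (which is $1$-Lipschitz), and with $Z$ an arbitrary Banach space containing $X$; the $1$-Lipschitz extension $g\colon Z\to X$ produced by $(i)$ is exactly a $1$-Lipschitz retraction.

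The core is $(ii)\Rightarrow(iii)$, and for this I would show that $(ii)$ forces the binary intersection property on $X$, which by the same characterisation after Theorem \ref{t3} is equivalent to $1$-injectivity. Let $\{B(x_i,r_i)\}_{i\in I}$ be mutually intersecting closed balls in $X$, so that $\|x_i-x_j\|\le r_i+r_j$ for all $i,j$. Embed $X$ isometrically into $W=\ell_\infty(B_{X^\ast})$ via $Jx=(\langle x,f\rangle)_{f\in B_{X^\ast}}$ and put $w(f)=\sup_{i\in I}(\langle x_i,f\rangle-r_i)$. The inequality $\|x_i-x_j\|\le r_i+r_j$ shows at once that $w\in\ell_\infty(B_{X^\ast})$ and that $\|w-Jx_i\|_\infty\le r_i$ for every $i$, i.e. $w$ lies in all the balls $B_W(Jx_i,r_i)$ (this is just the elementary binary intersection property of $\ell_\infty(\Gamma)$). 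Now take, via $(ii)$, a $1$-Lipschitz retraction $\rho\colon W\to X$ onto $J(X)$; then $\|\rho(w)-x_i\|\le\|w-Jx_i\|_\infty\le r_i$, so $\rho(w)\in\bigcap_i B(x_i,r_i)$. Hence $X$ has the binary intersection property, giving $(iii)$.

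It remains to prove $(ii)\Rightarrow(i)$, and the right intermediate statement is: $X$ is a $1$-Lipschitz retract of \emph{every} metric space containing it, not merely of its Banach superspaces. Granting this, $(i)$ is trivial: for metric spaces $X\subseteq Z$ and a $1$-Lipschitz $f\colon X\to Y$, pick a $1$-Lipschitz retraction $r\colon Z\to X$ and set $g=f\circ r$, which is $1$-Lipschitz and restricts to $f$ on $X$. To obtain the intermediate statement, note that $X$, being a Banach space (hence metrically convex) with the binary intersection property from the previous paragraph, is hyperconvex, and then run the Aronszajn-Panitchpakdi one-point extension: given a metric space $M\supseteq X$, a Zorn's lemma argument produces a maximal $1$-Lipschitz partial retraction $r\colon D\to X$ with $X\subseteq D\subseteq M$; if some $m\in M$ were absent from $D$, the balls $\{B_X(r(s),d(m,s)):s\in D\}$ would be mutually intersecting, since $\|r(s)-r(s')\|\le d(s,s')\le d(m,s)+d(m,s')$, hence would share a point that we could take as $r(m)$, contradicting maximality; so $D=M$. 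I expect this last upgrade to be the main obstacle: an arbitrary metric superspace of $X$ cannot be realised linearly around $X$, so the passage from injectivity to the absolute Lipschitz retract property genuinely requires the transfinite one-point extension, each step of which is solved by the binary intersection property.
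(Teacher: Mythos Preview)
Your argument is correct. The paper itself does not supply a proof of this theorem; it merely introduces it with the parenthetical reference ``(see \cite[Propositions 1.2 and 1.4]{bs})'' and moves on. So there is no in-paper argument to compare against, only the external citation.

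That said, your route is exactly the classical one and is consistent with how the paper treats the surrounding material. You use the binary intersection property as the pivot: $(ii)\Rightarrow\text{BIP}$ via an isometric embedding into $\ell_\infty(B_{X^\ast})$ together with the explicit point $w(f)=\sup_i(\langle x_i,f\rangle-r_i)$, and then BIP gives $(iii)$ by the equivalence recorded right after Theorem~\ref{t3}. For $(ii)\Rightarrow(i)$ you observe that for a Banach space BIP coincides with hyperconvexity (metric convexity makes ``$\|x_i-x_j\|\le r_i+r_j$'' equivalent to ``$B(x_i,r_i)\cap B(x_j,r_j)\neq\emptyset$''), and then run the Aronszajn--Panitchpakdi one-point extension plus Zorn to retract an arbitrary metric superspace onto $X$. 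This is precisely the mechanism the paper itself invokes later, in the metric setting, as Theorem~\ref{tf8} (with the Espinola--Khamsi reference). The remaining implications $(iii)\Rightarrow(ii)$ and $(i)\Rightarrow(ii)$ are, as you note, immediate.

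One small presentational point: in the step $(ii)\Rightarrow(i)$ you invoke the BIP ``from the previous paragraph,'' so your write-up tacitly uses the already-proved $(ii)\Rightarrow(iii)$. That is logically fine since you are establishing a cycle of equivalences, but it is worth flagging explicitly so the reader does not mistake $(ii)\Rightarrow(i)$ for a standalone implication.
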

\begin{remark}\label{er9}
(a). The above theorem does not extend to 2-injective spaces. This is because $c_0$ is known to be a 
2-absolute Lipschitz retract whereas it is not 2-injective, as Theorem \ref{t4} informs us: it is only 
separably 2-injective, being characterised by the Hahn Banach extension property within the class of all separable 
Banach spaces as opposed to the whole class of Banach spaces suggested by 2-injectivity. Another example is 
provided by the space C(K) for a compact metric space. The fact that it is 2-injective was proved relatively 
recently by Kalton \cite{kn3} as a far reaching improvement of an old result of Lindenstrauss \cite{ls2} who had shown this 
space to be 20-injective.\\
(b). It is natural to ask if the Lipschitz extension property (LEP) as guaranteed by Theorem \ref{te7} extends to pairs of 
Banach spaces $(X,Y)$ where $Y$ may be chosen to be a Banach space other than $\ell_2$. It turns out that the 
choice of $Y$ as a strictly convex Banach space doesn't enlarge the class of Banach spaces $X$ so that $(X,Y)$ 
would have the LEP- such an $X$ would necessarily be a Hilbert space (and in that case, $Y$ is also a Hilbert 
space). However, it is possible that a pair $(X,Y)$ has LEP in which both $X$ and $Y$ are non-Hilbertian Banach 
spaces. This can be seen by considering, for instance, an arbitrary Banach space $X$ and applying McShane's 
theorem (referred to in the proof of Theorem \ref{te7} above) to each component of a $\ell_\infty$- valued map on $X$ to 
conclude that the pair $(X,\ell_\infty)$ has LEP. Whether in this statement,$\ell_\infty$ could be replaced by any 
other infinite dimensional Banach space is a highly nontrivial question which was settled partially by Johnson, 
Lindenstrauss and Schechtman \cite{jls}. They proved  that the pair $(X,Y)$ has LEP as long as $X$ is an 
n-dimensional space and Y is an arbitrary Banach space. Here it is to be noted that the Lipschitz constants L and 
$L'$ of the maps involved satisfy: $L'\leq nL$.\\
(c). In sharp contrast to the finite dimensional Banach spaces where, by virtue of Brouwer's fixed point theorem, 
the unit sphere is not a retract of the unit ball, in the infinite dimensional case there even exists a Lipschitz 
extension of the identity map on the unit sphere to the unit ball. This is a celebrated theorem of Benjamini and 
Sternfeld  \cite{bs}.\\
(d). As noted in Remark \ref{r2} (ii), referring to the Hahn Banach extension theorem as a typically locally convex phenomenon, a nonlinear analogue of that statement in terms of Lipschitz maps is also valid, thus leading to local convexity of a quasi Banach space X as being equivalent to the pair $(X,R)$ having LEP. This is easily seen to be the case by noting that the formula
$$ \| x\|_{0}=\sup\{f(x):f\in \text{Lip}_{0}(X),\| f\|_{\text{Lip}}\leq 1\}$$
defines a seminorm on (the quasi Banach space) $(X, \| \|)$. Here $\| f\|_{\textnormal{Lip}}$ , the 
Lipschitz norm of a Lipschitz function $f:X\rightarrow {\mathbb{ R}},  (f(0)=0)~(f\in \textnormal{Lip}_{0}(X))$ is defined by
$$ \| f\|_{\textnormal{Lip}}=\sup\Bigg\{\frac{\mid f(x)-f(y)\mid}{\mid x-y\mid};x,y\in X,x\neq y\Bigg\}$$
Now, for a given $x\in X, \| x\|=1$, consider the linear functional $f:Z\rightarrow R , tx\rightarrow t$ on the subspace
 $Z$ generated by $x$ and let F be an L-Lipschitz extension of $f$ on $X$. By definition of the quantities defined above, we have
$$ \| x\|_{0}\geq \frac{\mid F(x)\mid}{L}=\frac{f(x)}{L}=\frac{1}{L}> 0$$
and deduce that $\| \|_0$ is actually a norm on X. Finally, the estimates
$$ \| x\|_{0}\leq \| x\|\text{~and}~\| x\|\leq L \| x\|_{0},~\text{for~all}~x\in X$$
together yield that the given quasi norm is equivalent to the norm $\| \|_{0}$.

(e)  A metric analogue of the above theorem (Theorem \ref{te8})- with $X$ being a metric space- is also valid and can 
be described in terms of a certain intersection property of balls of the underlying metric space called 
{\it ‘hyperconvexity’}.  This is discussed in the next section. 
\end{remark}
\section{Intersection of balls and extendibility of maps}\label{sf}

We have already come across situations involving the role played by the intersection property of balls in ensuring extendability of bounded linear maps.
 This was seen to be the case in (the discussion following) Theorem \ref{t4} where injectivity of Banach spaces was characterised in terms of
 the binary intersection property.  On the other hand, Theorem \ref{te4} informs us that Hibertisability of $X$ can be described in terms of
 3-ball sequence property of $X$ which was defined in the same section (Definition \ref{ed1}). It is interesting that uniqueness property of Hahn Banach 
extensions can also be expressed in terms of intersection property of balls (See the comments preceding Definition \ref{ed1}).
\begin{theorem}\label{tf1}
For a Banach space X, TFAE:
\begin{enumerate}
\item[(i)]	X has $2$-ball sequence property.
\item[(ii)] X has the uniqueness property of Hahn Banach extensions.
\end{enumerate}
\end{theorem}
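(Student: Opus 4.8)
The plan is to route the equivalence through Theorem \ref{tc14}: condition (ii) holds exactly when $X^\ast$ is strictly convex, so it is enough to show that $X$ has the $2$-ball sequence property if and only if $X^\ast$ is strictly convex. The device I would use throughout is to attach to an increasing nested sequence of balls $B_n=B(x_n,r_n)$ with $r_n\uparrow\infty$ the function $\varphi(x)=\lim_n\bigl(\|x-x_n\|-r_n\bigr)$; nestedness forces $\|x_{n+1}-x_n\|\le r_{n+1}-r_n$, so the defining sequence is non-increasing, $\varphi$ is either identically $-\infty$ (in which case $\bigcup_n B_n=X$) or a real-valued $1$-Lipschitz convex function, and in either case $\bigcup_n B_n=\{\varphi\le 0\}$. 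A routine conjugate computation gives $\varphi=\sup\{F(\cdot)-c_F:F\in\mathcal F\}$ with $\mathcal F=\bigl\{F\in B_{X^\ast}:\sum_n(r_{n+1}-r_n+F(x_{n+1}-x_n))<\infty\bigr\}$ (each summand $\ge 0$) and $c_F=F(x_1)+r_1+\sum_n(r_{n+1}-r_n+F(x_{n+1}-x_n))$. Two observations drive the proof. First, if $X^\ast$ is strictly convex then this ``asymptotic face'' $\mathcal F$ is a singleton whenever non-empty: two distinct $F,F'\in\mathcal F$ would satisfy $F(\widehat d_n),F'(\widehat d_n)\to -1$ along a set of indices of infinite total radius-increment weight (where $\widehat d_n=(x_{n+1}-x_n)/\|x_{n+1}-x_n\|$), forcing $\|\tfrac12(F+F')\|=1$; hence each non-degenerate $\bigcup_n B_n$ is a closed half-space $\{F\le c_F\}$. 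Second, if the increments $x_{n+1}-x_n$ all lie in a subspace $Y$ (as in Definition \ref{ed1}), then $\widehat d_n\in S_Y$, so $\|F|_Y\|=1=\|F\|$ and $F$ is the norm-preserving extension of $F|_Y$.

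For $X^\ast$ strictly convex $\Rightarrow$ $2$-ball sequence property, fix a closed subspace $Y$ and two nested sequences as in Definition \ref{ed1}; discarding one whose union is all of $X$ (its constraint being vacuous), write $\bigcup_n B_n^{(i)}=\{F_i\le c_i\}$ with $F_i\in S_{X^\ast}$ the norm-preserving extension of $g_i:=F_i|_Y$, and let $z$ be the given common point, so $F_i(z)\le c_i$. Suppose $Y$ missed $\{F_1\le c_1\}\cap\{F_2\le c_2\}$. Since $\|g_1\|=\|g_2\|=1$, if $g_1,g_2$ were linearly independent the map $y\mapsto(g_1(y),g_2(y))$ would be onto $\mathbb R^2$ and hit $(c_1,c_2)$, contradicting the miss; so $g_2=\mu g_1$ for some $\mu\ne 0$, the intersection is non-empty when $\mu>0$, hence $\mu<0$, and emptiness reads $\lambda c_1+c_2<0$ where $\lambda:=-\mu>0$ (so $\lambda g_1+g_2=0$). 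Comparing norms forces $\lambda=1$, so $g_2=-g_1$; then $-F_2$ and $F_1$ are both norm-preserving extensions of $g_1$, whence $F_2=-F_1$ by the uniqueness property (Theorem \ref{tc14}); but then $0=(F_1+F_2)(z)\le c_1+c_2<0$, a contradiction. Therefore $Y\cap\bigl(\bigcup_n B_n^{(1)}\bigr)\cap\bigl(\bigcup_n B_n^{(2)}\bigr)\ne\emptyset$.

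For the converse I argue contrapositively. If $X^\ast$ is not strictly convex, fix $F_1\ne F_2$ in $S_{X^\ast}$ with $\|F_1+F_2\|=2$ and put $Y=\ker(F_1-F_2)$, so $F_1|_Y=F_2|_Y=:g$ with $\|g\|=1$; choose $z_0$ with $|(F_1-F_2)(z_0)|>4$ and a norming sequence $(\widehat e_k)\subset S_Y$ for $g$ with $g(\widehat e_k)\ge 1-2^{-k}$. Form two nested sequences with common first centre $z_0$, first radius $1$, all radius increments $1$, and centre increments $-\widehat e_k$ (sequence $1$) and $+\widehat e_k$ (sequence $2$); all increments lie in $Y$, the radii tend to $\infty$, and $z_0$ lies in both unions. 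From $\sup_n(F_j(x_n^{(1)})+r_n^{(1)})=F_j(z_0)+1+S$ and $\inf_n(F_j(x_n^{(2)})-r_n^{(2)})=F_j(z_0)-1-S$ with $0\le S\le 1$ one gets $\bigcup_n B_n^{(1)}\subseteq\{F_1\le c_{1,1}\}\cap\{F_2\le c_{1,2}\}$ and $\bigcup_n B_n^{(2)}\subseteq\{F_1\ge c_{2,1}\}\cap\{F_2\ge c_{2,2}\}$ with $\max_j c_{2,j}>\min_j c_{1,j}$ (this is where $|(F_1-F_2)(z_0)|>4$ is used); since every $y\in Y$ has $F_1(y)=F_2(y)=g(y)$, no $y\in Y$ can lie in both unions, so the $2$-ball sequence property fails for $Y$.

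The step I expect to be the main obstacle is the first of the two observations in the opening paragraph — that strict convexity of $X^\ast$ collapses the asymptotic face $\mathcal F$ to a single norm-preserving extension, so that the unions really are half-spaces — together with the bookkeeping needed in the converse when $g$ is only norming rather than norm-attaining (which is exactly what makes nested balls, rather than finitely many half-spaces, the natural objects here). Once these are in place, the remaining ingredients — the two-functional separation argument and the appeal to uniqueness of Hahn–Banach extensions — are routine.
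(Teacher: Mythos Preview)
The paper does not actually prove Theorem \ref{tf1}; it is stated as a known fact, with an implicit pointer to the literature (see the sentence preceding Definition \ref{ed1} and the reference \cite{op} behind Theorem \ref{te4}). So there is no paper-proof to compare against, and the only question is whether your argument stands on its own.

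It does. The reduction via Theorem \ref{tc14} to strict convexity of $X^\ast$ is the natural route, and your device of attaching to a nested sequence the function $\varphi(x)=\lim_n(\|x-x_n\|-r_n)$ works as claimed: monotonicity makes the conjugate computation $\varphi^\ast(F)=\lim_n(F(x_n)+r_n)$ legitimate, so $\varphi^\ast(F)<\infty$ is exactly your condition $F\in\mathcal F$, and biconjugation gives $\varphi=\sup_{F\in\mathcal F}(F-c_F)$. The crucial observation --- that strict convexity of $X^\ast$ forces $\mathcal F$ to be a singleton --- is correct: from $\sum_n(\delta_n-\|d_n\|)<\infty$ and $\sum_n\delta_n=\infty$ one gets $\sum_n\|d_n\|=\infty$, and then $\sum_n\|d_n\|(1+F(\widehat d_n))<\infty$ together with the analogous sum for $F'$ yields a common subsequence along which both $F(\widehat d_n)$ and $F'(\widehat d_n)$ tend to $-1$, whence $\|\tfrac12(F+F')\|=1$. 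The second observation, that $\widehat d_n\in S_Y$ forces $\|F|_Y\|=1$, is immediate. The forward direction then collapses to a clean two-functional case analysis on $Y$, and the final case $g_2=-g_1$ invokes uniqueness of Hahn--Banach extensions exactly where it should. The converse construction is also correct; the arithmetic with $S\le 1$ and $|(F_1-F_2)(z_0)|>4$ gives precisely the separation needed.

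Two points deserve one line each when you write it out. In the converse, the equality $\|g\|_{Y}=1$ on $Y=\ker(F_1-F_2)$ is not automatic: justify it by taking a norming sequence $(u_k)$ for $\tfrac12(F_1+F_2)$ and correcting $u_k$ into $Y$ along a fixed $w$ with $(F_1-F_2)(w)=1$, noting that $(F_1-F_2)(u_k)\to 0$. In the forward direction, dispose explicitly of the degenerate case $\bigcup_n B_n^{(i)}=X$ (i.e.\ $\mathcal F^{(i)}=\emptyset$), which is immediate but should be said.
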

\begin{remark}\label{fr1}
Given a Banach space $X$, a Banach space $Z$ containing $X$and a bounded linear map $g$ 
on $X$, it is formally easier to look for a bounded linear extension of $g$ to a larger space within $Z$ than to 
the whole space $Z$. The description of such phenomena involving extension to superspaces of $X$ in which $X$ is 
1-dimensional leads to a certain ball-intersection property which is the cardinal analogue of the binary 
intersection property(BIP) encountered earlier.
\end{remark}
\begin{definition}\label{fd3}
\textnormal{  For a given cardinal number $\gamma$, we say that a Banach space $X$ has the $\gamma $- 
{\it intersection property} if each family of $\gamma$-many mutually intersecting closed balls in $X$ has a nonempty 
intersection.}
\end{definition}
We make the following observations:
\begin{enumerate}
\item[(a)](BIP) is the same as $\gamma -$ intersection property  for all cardinal numbers $\gamma$.
\item[(b)] $\gamma_{1}$- intersection property  implies $\gamma_2$- intersection property  for all $\gamma_1\geq 
\gamma_2$.
\item[(c)] 4-intersection property is equivalent to k-intersection property for all $k\geq 4$.
\end{enumerate}
The following theorem of Lindenstrauss \cite{ls1} describes the situation where it is possible to extend bounded linear maps from a 
Banach space to a larger space in which it is 1-codimensional.
\begin{theorem}\label{tf4}
For a Banach space $X$, TFAE:
\begin{enumerate}
\item[(i)]	For every pair of Banach spaces $Z$ and $Y$ with $Z\supset X, ~ \dim \frac{Z}{X}=1,~ \dim Y= 2$, every operator from $X$ into $Y$ has a norm-preserving extension from $Z$ into $Y$.
\item[(ii)] $X$ has $3$-intersection property.
\end{enumerate}
\end{theorem}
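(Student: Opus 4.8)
The plan is to peel the problem down to one elementary extension step and then to recognize the resulting inequality as a ball‑intersection condition \emph{inside the two‑dimensional range} $Y$; the number $3=\dim Y+1$ will enter precisely through Helly's theorem in the plane.

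\textbf{Setup.} Since $\dim(Z/X)=1$ we may write $Z=X\oplus\mathbb{R}z$ with $z\notin X$ and $\|\cdot\|_{Z}$ restricting to $\|\cdot\|_{X}$, and we normalize $\|T\|=1$. A linear extension $\widetilde T\colon Z\to Y$ of $T$ is completely determined by $y_{0}:=\widetilde T z\in Y$, and a routine homogeneity check (separately treating the signs of the scalar) shows that $\|\widetilde T\|\le 1$ holds if and only if $\|Tx+y_{0}\|_{Y}\le\|x+z\|_{Z}$ for every $x\in X$, i.e. $y_{0}\in\bigcap_{x\in X}B_{Y}(-Tx,\,\|x+z\|_{Z})$, where $B_{Y}(c,r)$ is the closed ball of centre $c$ and radius $r$. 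As $\dim Y=2$ each of these closed balls is compact, so by Helly's theorem in $\mathbb{R}^{2}$ (applied to the compact convex sets obtained by intersecting all of them with one fixed member) this intersection is nonempty as soon as \emph{every three} of the balls have a common point.

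\textbf{Proof of (ii)$\Rightarrow$(i).} Fix $x_{1},x_{2},x_{3}\in X$ and put $\rho_{i}=\|x_{i}+z\|_{Z}>0$. Consider the ``parallel'' balls $B_{X}(x_{i},\rho_{i})$ \emph{in $X$}. Because $X$ sits isometrically in $Z$, $\|x_{i}-x_{j}\|_{X}=\|(x_{i}+z)-(x_{j}+z)\|_{Z}\le\rho_{i}+\rho_{j}$, so these three balls are mutually intersecting. The $3$‑ball intersection property of $X$ now supplies $x_{0}\in X$ with $\|x_{0}-x_{i}\|_{X}\le\rho_{i}$ for $i=1,2,3$, and then $y_{0}:=-Tx_{0}$ satisfies $\|Tx_{i}+y_{0}\|_{Y}=\|T(x_{i}-x_{0})\|_{Y}\le\|x_{i}-x_{0}\|_{X}\le\rho_{i}$, i.e. $y_{0}\in B_{Y}(-Tx_{i},\rho_{i})$ for the three indices. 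Hence every three of the balls of the Setup meet, and the Helly reduction delivers the desired norm‑preserving extension.

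\textbf{Proof of (i)$\Rightarrow$(ii) (outline).} Let $B_{X}(x_{i},r_{i})$, $i=1,2,3$, be mutually intersecting; suppose toward a contradiction that $\bigcap_{i}B_{X}(x_{i},r_{i})=\emptyset$. First build the host space: embed $X$ isometrically into some $\ell_{\infty}(\Gamma)$; this space has the binary intersection property and the balls $B_{\ell_{\infty}(\Gamma)}(x_{i},r_{i})$ are still mutually intersecting, so there is $z\in\ell_{\infty}(\Gamma)$ with $\|z-x_{i}\|\le r_{i}$ for all $i$. Put $Z:=X+\mathbb{R}z$; then $z\notin X$ (the intersection is empty), so $Z$ is a one‑codimensional superspace of $X$. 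The crux is to manufacture a two‑dimensional quotient $q\colon X\to X/N$ in which the (enlarged) images of the balls are still disjoint, $\bigcap_{i}B_{X/N}(qx_{i},r_{i})=\emptyset$. Granting this, apply (i) to $q$ and the pair $X\subset Z$: a norm‑preserving extension $\widetilde q\colon Z\to X/N$ would give $\|qx_{i}-\widetilde q z\|_{X/N}=\|\widetilde q(x_{i}-z)\|\le\|q\|\,\|x_{i}-z\|_{Z}\le r_{i}$ for $i=1,2,3$, so $\widetilde q z$ would lie in that empty intersection — a contradiction. To build $N$, minimize the convex coercive (and everywhere positive) function $\varphi(y)=\max_{i}(\|y-x_{i}\|-r_{i})$; at a minimizer the first‑order optimality condition yields a convex combination $\sum_{i\in A}\lambda_{i}u_{i}=0$ of unit subgradients $u_{i}$ of the active norms $\|\cdot-x_{i}\|$. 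Mutual intersection forbids $|A|\le 2$ (two of the $u_{i}$ would then be antipodal, forcing $\|x_{i}-x_{j}\|>r_{i}+r_{j}$), so all three indices are active; two of $u_{1},u_{2},u_{3}$ are linearly independent, their common kernel $N$ has codimension $2$, and a direct computation using $\sum_{i\in A}\lambda_{i}u_{i}=0$ shows that no $w\in X$ can satisfy $\mathrm{dist}(w-x_{i},N)\le r_{i}$ for every $i$ — which is exactly $\bigcap_{i}B_{X/N}(qx_{i},r_{i})=\emptyset$.

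\textbf{Where the difficulty lies.} The direction (ii)$\Rightarrow$(i) is painless; the weight of the theorem sits in the quotient construction for (i)$\Rightarrow$(ii), and within it in the degenerate possibilities hidden by the phrase ``minimize $\varphi$'': the infimum of $\varphi$ need not be attained when $X$ is not reflexive, and some $\|x_{i}-x_{j}\|$ may equal $r_{i}+r_{j}$. The first is handled by an approximate‑minimizer argument (Ekeland's variational principle), the second by first shrinking every $r_{i}$ by a small $\delta>0$ — which keeps the balls mutually intersecting when the pairwise distances are strict and makes $\inf\varphi>0$ — with the remaining tight configurations treated directly. Apart from these points everything is bookkeeping with the triangle inequality; the complete argument is Lindenstrauss's \cite{ls1}.
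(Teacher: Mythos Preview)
The paper does not prove this theorem: it is stated with attribution to Lindenstrauss~\cite{ls1} and no argument is supplied, so there is nothing in the text to compare against. Your proposal is therefore being measured against the original source, and your overall architecture---reduce the extension problem to a ball--intersection condition in the two--dimensional range via Helly, and for the converse pass to a two--dimensional quotient separating the three balls---is indeed Lindenstrauss's line of attack.

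Your direction (ii)$\Rightarrow$(i) is clean and correct. For (i)$\Rightarrow$(ii), the skeleton is right and the ``direct computation'' you defer actually goes through: at a minimizer $y^{*}$ with value $\epsilon>0$ one has $u_{i}(y^{*}-x_{i})=r_{i}+\epsilon$ for each active $i$, so if some $w$ satisfied $\mathrm{dist}(w-x_{i},N)\le r_{i}$ then $u_{i}(w-x_{i})\le r_{i}$ and hence $u_{i}(y^{*}-w)\ge\epsilon$ for every $i$; summing against the $\lambda_{i}$ contradicts $\sum\lambda_{i}u_{i}=0$. Two small corrections: the equality case $\|x_{i}-x_{j}\|=r_{i}+r_{j}$ is \emph{not} an obstruction in your $|A|\le2$ step, since you obtain $u_{1}(x_{2}-x_{1})=r_{1}+r_{2}+2\epsilon>r_{1}+r_{2}$ regardless, so you may drop that caveat; and you should say explicitly why all $\lambda_{i}>0$ (if one vanishes you are back in the two--index situation and derive the same contradiction). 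The only genuine technical cost is the possible non--attainment of $\inf\varphi$ in a non--reflexive $X$; your suggested fix via approximate minimizers (or Ekeland) is the standard remedy and suffices.
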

A stronger result involving the extension of operators taking values in a larger space is the following famous theorem of Lindenstrauss \cite{ls1}.
\begin{theorem}\label{tf5}
For a Banach space $X$, TFAE:
\begin{enumerate}
\item[(i)]	For every pair of Banach spaces $Z$ and $Y$ with $Z \supset X, ~\dim \frac{Z}{X}=1,~ \dim Y= 3$, every operator from X into Y has a norm-preserving extension from $Z$ into $Y$.
\item[(ii)] For every pair of Banach spaces $Z$ and $Y$ with $Z \supset X$, every compact operator from $X$ into $Y$ has a compact norm-preserving extension from $Z$ into $Y$.
\item[(iii)] For every pair of Banach spaces $Z$ and $Y$ with $Z \supset X$, every weakly compact operator from $X$ into $Y$ has a weakly compact norm-preserving extension from $Z$ into $Y$.
\item[(iv)] $X^\ast$ is an $L_1 (\mu)$ space for some measure $\mu$.
\item[(v)] $X$ has $4$-intersection property.
\end{enumerate}
\end{theorem}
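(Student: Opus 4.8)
\medskip
\noindent\emph{Proof proposal.}

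The plan is to prove the five conditions equivalent in two packages: the \emph{geometric core} (iv)$\Leftrightarrow$(v), identifying isometric $L_1$-preduals with the $4$-intersection property, and the \emph{operator part}, which hooks (i), (ii), (iii) onto (v) by extension arguments. A workable cycle is: (iv)$\Rightarrow$(v), (v)$\Rightarrow$(iv); (v)$\Rightarrow$(i), (i)$\Rightarrow$(v); and (v)$\Rightarrow$(iii), (v)$\Rightarrow$(ii), with (ii)$\Rightarrow$(i) and (iii)$\Rightarrow$(i) essentially free. Of these, the deep implications are the two extension statements (v)$\Rightarrow$(iii) and (v)$\Rightarrow$(ii), which constitute Lindenstrauss's core theorem \cite{ls1}; the Helly-type reductions and the identification of $L_1$-preduals are comparatively soft, although the \emph{isometric} identification in (v)$\Rightarrow$(iv) does require some structure theory.

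For the core, (iv)$\Rightarrow$(v): if $X^\ast=L_1(\mu)$ then $X^{\ast\ast}=L_\infty(\mu)$ is a commutative $C^\ast$-algebra, hence linearly isometric to $C(K)$ for an extremally disconnected $K$, so by Theorem \ref{t3} (and the binary intersection property listed there) $X^{\ast\ast}$ is $1$-injective and has the binary intersection property. Given four mutually intersecting closed balls in $X$, they remain mutually intersecting in $X^{\ast\ast}$ and so have a common point $x^{\ast\ast}\in X^{\ast\ast}$; what is left is to slide $x^{\ast\ast}$ into $X$, which I would do by coupling Goldstine's theorem with the local $\mathcal{L}_{\infty,1+}$-structure of $X$ read off from $X^{\ast\ast}\cong C(K)$, together with a Helly argument that uses decisively that only \emph{four} balls occur. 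Conversely, for (v)$\Rightarrow$(iv) I would first show that the $4$-intersection property forces every finite-dimensional $E\subset X$ to sit $(1+\varepsilon)$-isomorphically inside a finite-dimensional $\ell_\infty^n$-subspace of $X$ (the $\ell_\infty^n$ being assembled from finite ball-configurations that are guaranteed common points by (v)), i.e. $X$ is an $\mathcal{L}_{\infty,1+}$-space; dualizing makes $X^\ast$ an $\mathcal{L}_{1,1+}$-space, and the representation theory of $\mathcal{L}_1$-preduals then upgrades this to an isometry $X^\ast\cong L_1(\mu)$.

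For the operator part, (v)$\Rightarrow$(i): given a norm-one $T:X\to Y$ with $\dim Y=3$ and $Z\supset X$, $\dim(Z/X)=1$, fix $z_0\in Z\setminus X$; an extension $S$ of $T$ is the same thing as a choice of $y_0:=Sz_0\in Y$, and $\|S\|\le 1$ holds precisely when $y_0$ belongs to every ball $B_u=\{y\in Y:\|y+Tu\|\le\|u+z_0\|\}$, $u\in X$. These are compact balls in $\mathbb{R}^3$, so by Helly's theorem it suffices that every four of them meet; but for any $u_1,\dots,u_4\in X$ the balls $\{x\in X:\|x+u_i\|\le\|u_i+z_0\|\}$ are mutually intersecting in $X$ (triangle inequality applied to $(u_i+z_0)-(u_j+z_0)$), so by (v) they have a common point $x^\ast$, and then $Tx^\ast\in\bigcap_{i=1}^4 B_{u_i}$ because $\|T\|\le 1$. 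Choosing $y_0\in\bigcap_{u}B_u$ and setting $S(x+tz_0)=Tx+ty_0$ gives the desired norm-preserving extension. The implications (ii)$\Rightarrow$(i) and (iii)$\Rightarrow$(i) are immediate: a norm-one operator into a $3$-dimensional space is of finite rank, hence compact and weakly compact, so (ii), respectively (iii), already furnishes a (compact, respectively weakly compact) norm-preserving extension into that space. Finally, (i)$\Rightarrow$(v) is obtained by reversing the construction of (v)$\Rightarrow$(i): any four mutually intersecting balls of $X$ can be realized as the sole obstruction to a one-codimensional norm-preserving extension of an explicit norm-one map into a $3$-dimensional polyhedral space, so (i) returns the common point -- this is exactly the reversal already used in the two-dimensional case, Theorem \ref{tf4}.

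The hard part, and the one I expect to absorb most of the work, is (v)$\Rightarrow$(iii) (and (v)$\Rightarrow$(ii)). The idea is a transfinite induction along a chain $X=Z_0\subset Z_1\subset\cdots\subset Z_\beta=Z$ with $\dim(Z_{\alpha+1}/Z_\alpha)=1$: at a successor step the admissible values of the extension on the new vector again form an intersection of closed balls in $Y$, and, exactly as in (v)$\Rightarrow$(i), every \emph{finite} subfamily has a common point by the $4$-intersection property of $X$. The delicate point is that $Y$ is now infinite-dimensional, so Helly is unavailable; one must instead choose the successive values \emph{inside} the relevant compact piece of $Y$ -- inside the weakly compact set $\overline{\mathrm{conv}}\,T(B_X)$ for (iii), inside the norm-compact set $\overline{T(B_X)}$ for (ii) -- so that weak (respectively norm) compactness converts the finite intersection property into a genuine common point of the full family, and then the transfinite steps have to be organized coherently so that the whole process closes up with a bounded operator of the same norm and the same compactness type. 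Together with the purely isometric identification $X^\ast\cong L_1(\mu)$ in (v)$\Rightarrow$(iv), these are the two places where real work is required; the Helly reductions, (v)$\Rightarrow$(i), and (ii),(iii)$\Rightarrow$(i) are routine.
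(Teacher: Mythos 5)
First, a point of reference: the paper does not prove Theorem \ref{tf5} at all --- it is quoted as a theorem of Lindenstrauss with a citation to \cite{ls1}, so your proposal is not competing with an in-paper argument but with a long memoir. Within your outline, the genuinely correct pieces are the soft ones: (v)$\Rightarrow$(i) via Helly in the $3$-dimensional target (using the paper's observation that the $4$-intersection property yields the $k$-intersection property for all finite $k$, so every finite subfamily of the admissible balls $B_u$ meets), and (ii)$\Rightarrow$(i), (iii)$\Rightarrow$(i). Everything else has a concrete gap. For (iv)$\Rightarrow$(v): Goldstine plus local reflexivity only lets you push the common point $x^{\ast\ast}$ into the $\varepsilon$-enlarged balls $B(x_i,r_i+\varepsilon)$, i.e.\ you get the \emph{almost} $4$-intersection property; the passage from almost to exact intersection is a genuine difficulty and is nowhere addressed by ``a Helly argument that uses decisively that only four balls occur''. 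For (v)$\Rightarrow$(iv): the claim that finite-dimensional subspaces of $X$ sit $(1+\varepsilon)$-isomorphically inside finite-dimensional $\ell_\infty^n$-subspaces ``assembled from ball configurations'' is essentially a restatement of the conclusion that $X$ is an $\mathcal{L}_{\infty,1+}$-space, not a derivation of it from the $4$-ball property, and the final upgrade from ``$X^\ast$ is $\mathcal{L}_{1,1+}$'' to an isometry $X^\ast\cong L_1(\mu)$ is itself a nontrivial theorem you would have to quote or prove. For (i)$\Rightarrow$(v), ``reversing the construction'' is unsupported: the natural reversal (an operator into a $3$-dimensional space, e.g.\ a quotient map) only controls distances to cosets or images, not exact distances in $X$, and since the paper proves neither Theorem \ref{tf4} nor Theorem \ref{tf5}, there is no ``already used'' two-dimensional reversal to lean on.

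The most serious gap is exactly where you expect it, (v)$\Rightarrow$(ii),(iii), but the specific mechanism you propose does not work as stated. At a successor step the admissible values at the new vector form the family $\{B_u\}_{u}$, and the common points of finite subfamilies produced by the $4$-ball property are of the form $Tx^\ast$ with $\|x^\ast\|$ uncontrolled, so there is no reason these finite intersections meet the compact set $\overline{T(B_X)}$ or the weakly compact set $\overline{\mathrm{conv}}\,T(B_X)$; hence compactness of that set cannot convert the finite intersection property into a common point of the full family. Moreover, compactness of each one-codimensional extension is automatic (it is a rank-one perturbation), so the real obstruction is at the end of the transfinite induction: the final operator's image of the unit ball is the closure of an increasing union of (weakly) compact sets, which need not be (weakly) compact, and nothing in your plan controls this. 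This is precisely the hard content of Lindenstrauss's memoir, which does not proceed by such a naive transfinite induction but through almost-extensions, finite-dimensional approximation and the $L_1$-predual structure of $X^\ast$; as it stands, your proposal proves the easy third of the theorem and postpones, rather than supplies, the rest.
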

It is still desirable to extend bounded linear operators from subspaces $Y$ of $X$ to (larger) subspaces of $X$ including $Y$ as a 
1-codimensional subspace, but taking values in an arbitrary Banach space. This is achieved in terms of the so called $ \gamma$- ball 
property defined as follows.
\begin{definition}\label{fd6}
\textnormal{ Given a subspace $Y$ of $X$ and a cardinal $\gamma$, we say that $Y$ has the $\gamma$-ball property if each family of $\gamma$- many closed 
balls intersecting in $X$ and having their centres in $Y$ also intersect in $Y$.
 }
\end{definition}
We note the following features of this property:
\begin{enumerate}
\item[(a)] k+1-ball property implies k-ball property for all $k\geq 1$.
\item[(b)] k-ball property is equivalent to 3-ball property for all $k\geq 3$.
\item[(c)] A subspace $Y$ of $X$ has 3-ball property if and only if it is an M-ideal.
\item[(d)] 2-ball property does not imply 3-ball property.
\end{enumerate}
The following theorem is due to Lindenstrauss \cite{ls3}.
\begin{theorem}\label{tf7}
For a (real) Banach space $X$ and a closed subspace $Y$, TFAE:
\begin{enumerate}
\item[(i)] $Y$ has $\gamma$-ball property for all cardinals $\gamma$.
\item[(ii)] For each $x\notin Y$, each bounded linear map on $Y$ (and taking values in an arbitrary Banach space) has a norm-preserving extension to \textnormal{span}$[Y, x]$.
\end{enumerate}
\end{theorem}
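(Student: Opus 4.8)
The plan is to reduce the whole equivalence to one elementary observation about norm-preserving extensions across a one-dimensional gap, and then to apply the ball-intersection hypothesis to two well-chosen test objects. Fix $x \in X \setminus Y$ and write $E := \mathrm{span}[Y,x] = Y \oplus \mathbb{R}x$. The bridge observation is: if $T \in L(Y,Z)$ with $\|T\| \le 1$, then $T$ has an extension $\widetilde T \in L(E,Z)$ with $\|\widetilde T\| \le 1$ if and only if there exists $z_0 \in Z$ with $\|Ty - z_0\|_Z \le \|x - y\|_X$ for every $y \in Y$; and in that case $\widetilde T(y + \lambda x) := Ty + \lambda z_0$ works. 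Necessity is clear (take $z_0 = \widetilde T x$). For sufficiency one checks $\|Ty + \lambda z_0\|_Z \le \|y + \lambda x\|_X$: this is immediate for $\lambda = 0$ from $\|T\| \le 1$, and for $\lambda \ne 0$ one divides by $|\lambda|$, rescales $y$, and observes that---because $X$ is a \emph{real} space---both signs collapse to the same family of inequalities over $y \in Y$, namely the displayed one (using $y \mapsto -y$ for one of them). Equivalently, $z_0$ is required to lie in $\bigcap_{y \in Y} B_Z(Ty, \|x-y\|_X)$, a family of mutually intersecting balls; this is the conceptual reason the ball-intersection notions of this section are the right ones.

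For the implication (ii) $\Rightarrow$ (i) I would verify the $\gamma$-ball property for all cardinals $\gamma$ at once. Let $\{B_X(y_\alpha, r_\alpha)\}_{\alpha \in A}$ be closed balls in $X$ with all centres $y_\alpha \in Y$ and with a common point $p \in X$. If $p \in Y$ there is nothing to prove; otherwise put $x := p$ and apply (ii) to the identity map $\mathrm{id}_Y \in L(Y,Y)$. This yields a norm-one extension $P \in L(\mathrm{span}[Y,x], Y)$, which is the identity on $Y$ and takes values in $Y$, hence is a norm-one linear projection of $\mathrm{span}[Y,x]$ onto $Y$. Then $q := Px \in Y$ and, for every $\alpha$, $\|q - y_\alpha\|_X = \|P(x - y_\alpha)\|_X \le \|x - y_\alpha\|_X \le r_\alpha$, so $q \in Y \cap \bigcap_{\alpha} B_X(y_\alpha, r_\alpha)$. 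Degenerate cases---$A$ empty, or some $r_\alpha \le 0$ forcing $p = y_\alpha \in Y$---are trivial.

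For (i) $\Rightarrow$ (ii), suppose $Y$ has the $\gamma$-ball property for all $\gamma$; let $x \in X \setminus Y$ and $T \in L(Y,Z)$ with $Z$ an arbitrary Banach space, and, after scaling, assume $\|T\| = 1$. Apply the $\gamma$-ball property, with $\gamma = \operatorname{card} Y$, to the family $\{B_X(y, \|x-y\|_X) : y \in Y\}$: these balls are centred in $Y$ and have the common point $x \in X$, so there is $q \in Y$ with $\|q - y\|_X \le \|x - y\|_X$ for all $y \in Y$. Taking $z_0 := Tq$ in the bridge observation, the condition to be verified becomes $\|T(y - q)\|_Z \le \|x - y\|_X$ for all $y \in Y$, and this holds because $\|T(y-q)\|_Z \le \|y - q\|_X = \|q - y\|_X \le \|x - y\|_X$. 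Hence $\widetilde T(y + \lambda x) := Ty + \lambda Tq$ is an extension of $T$ to $\mathrm{span}[Y,x]$ with $\|\widetilde T\| \le 1 = \|T\|$, and $\|\widetilde T\| \ge \|T\|$ since it restricts to $T$; this is (ii).

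The only step that needs genuine care is the bridge observation, and specifically the collapse of the extension condition to a single family of inequalities over $Y$: this is precisely where real scalars are used, and over $\mathbb{C}$ one would instead face a whole circle of conditions and a different statement. Once that reduction is secured, each direction is driven entirely by the choice of test object---the identity $Y \to Y$, whose norm-one extension is forced to be a projection, for (ii) $\Rightarrow$ (i); and the ``largest admissible radii'' family $\{B_X(y, \|x-y\|)\}_{y \in Y}$, which trivially contains $x$, for (i) $\Rightarrow$ (ii)---after which only the routine estimate above remains.
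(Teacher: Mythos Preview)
The paper does not supply its own proof of this theorem; it is simply stated and attributed to Lindenstrauss \cite{ls3}. So there is nothing in the paper to compare against, and your proposal must be judged on its own merits.

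Your argument is correct. The bridge observation is the right reduction, and the verification that the two sign cases collapse over $\mathbb{R}$ is handled properly. In (ii)~$\Rightarrow$~(i) the choice of the identity map as test object is exactly the standard move: a norm-preserving extension of $\mathrm{id}_Y$ to $\mathrm{span}[Y,x]$ is forced to be a norm-one projection, and projecting the common point down into $Y$ finishes it. In (i)~$\Rightarrow$~(ii) the family $\{B_X(y,\|x-y\|)\}_{y\in Y}$ is the canonical one, and the chain $\|T(y-q)\|\le\|y-q\|\le\|x-y\|$ is clean.

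One remark that sharpens the picture: in your (i)~$\Rightarrow$~(ii) argument, the point $q\in Y$ you obtain actually gives a norm-one projection $P:\mathrm{span}[Y,x]\to Y$, $y+\lambda x\mapsto y+\lambda q$ (this is your bridge observation applied to $T=\mathrm{id}_Y$), and then your extension is just $\widetilde T=T\circ P$. So both directions are really the same fact---the $\gamma$-ball property for all $\gamma$ is equivalent to the existence, for every $x\notin Y$, of a norm-one projection from $\mathrm{span}[Y,x]$ onto $Y$---and (ii) follows immediately by composition. This is the content of Lindenstrauss's original argument, and your write-up captures it fully.
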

The next property involving intersection of balls is motivated by Theorem \ref{te7} which was proved in the 
special case of finite dimensional Hilbert spaces. A complete proof of Kirszbraum's theorem asserting the validity 
of his theorem in the infinite dimensional setting is based on the following intersection property of closed balls 
in Hilbert spaces which we shall isolate as a definition in the class of (pairs of) Banach spaces.
\begin{definition}\label{fd5}
\textnormal{
Given a cardinal number $\gamma$ and Banach spaces X and Y, we shall say that the pair (X,Y) has $(K,\gamma)$- property if for all families of vectors $\{x_i\}_{i\in \wedge}~\text{ and}~ \{y_i\}_{i\in\wedge}$ in X and Y, respectively, with $\mid\wedge\mid=\gamma$ such that
$$ \| y_i-y_j\|\leq\| x_i-x_j\|,~\forall ~i,j\in \wedge ~\text{and}~\bigcap_{i\in \wedge}B(x_i,r_i)\neq\phi,$$
it follows that $ \bigcap_{i\in \wedge}B(y_i,r_i)\neq\phi$. We shall also say that the pair $(X,Y)$ has Kirszbraum 
property (property (K)) if it has $(K,\gamma)$-property for all cardinal numbers $\gamma$. Also, $X$ is said to 
have property (K) if the pair $(X,X)$ has it.}
\end{definition}
Let us note the following properties of property (K) and its relationship with other ball properties.
\begin{enumerate}
\item[(a)] $\mathbb{R}^n$ has property (K). In fact more is true: under the conditions of Definition \ref{fd5}, one has
 $ \mid\bigcap_{i \in \wedge}B(x_i,r_i) \mid \leq \mid\bigcap_{i\in \wedge}B(y_i,r_i)\mid$.
\item[(b)] $\gamma$- intersection property implies $(K,\gamma)$- property for all cardinals $\gamma$. In particular, injectivity implies property (K).
\item[(c)]	Hilbert spaces have property (K).
\end{enumerate}
In what follows, we shall include a proof of (c) and use it below to complete the proof of Kirszbraum's theorem 
which was sketched in Theorem \ref{te7} for the
 special case of finite dimensional Hilbert spaces.\\
Indeed, we show for a pair of Banach spaces $(X,Y)$ satisfying property (K) that if  $A\subset X~\text{ and ~if}\, f:A\rightarrow Y$ is a 1-Lipschitz map, then $f$ has a 1-Lipschitz extension to the whole of $X$. To this end, let $z \notin A$ and consider the family of balls
$$ \{B(x,\| x-z\|\}_{x\in A}~\text{and}~\{B(f(x),\| x-z\|\}_{x\in A}$$
in $X$ and $Y$, respectively. Since $\| f(x)-f(y)\| \leq \| x-y\| ~ \text{for~ all}~ x,y\in A$ 
and since $z$ obviously belongs 
to $B(x,\| x-z\|)~\text{for each}~ x\in A$, the given hypothesis shows that there exists $y\in 
B(f(x),\| x-z\|)~\text{for each}~ x\in A$. 
The assignment $x\rightarrow y$ , therefore, gives the desired extension of $f$  to  $A\bigcup \{z\}$. Finally, a 
little Zornification yields an extension of $f$ 
to a 1-Lipschitz mapping on the whole space $X$.
\begin{remark}\label{fr6}
(a): It is interesting to observe that property (K) is, in some sense, exclusive to Hilbert spaces.
 In fact, the implication $(i)\Rightarrow (ii)$ of Theorem \ref{te7} shows that as long as $Y=\ell_2$, the validity 
of property (K) for the pair $(X,\ell_2)$ yields that X is a Hilbert space. In fact, it turns out that one may 
consider any strictly convex space in place of $\ell_2$ to derive the same conclusion.\\
(b): As mentioned in Remark \ref{er9}, a metric analogue of Theorem \ref{te8} is also valid in the setting of 
X being a metric space. The appropriate property characterising 1-injectivity in this more general setting involves a certain 
ball-intersection property- which can be looked upon as the nonlinear analogue of the binary intersection property (BIP).
 The precise definition is given below:
\end{remark}
\begin{definition}\label{fd6'}
\textnormal{A metric space X is said to be {\it hyperconvex} (in the sense of Aronszajn and Panitchpakdi) if 
$\bigcap_{i\in \wedge}B(x_i,r_i) \neq \phi$ for any collection  in $\{x_i\}_{i\in\wedge}$ in X and  $\{r_i\}_{i\in\wedge} \subset \mathbb{R}^{+}$ such that $d(x_i,x_j) \leq 
r_i+r_j, \forall \,\, i,j \in  \wedge $.  }
\end{definition}
Here is the promised characterisation in which the notions of injectivity and absolute Lipschitz retract have to be understood in the sense of metric spaces.
\begin{theorem}\label{tf8}
For a metric space X, TFAE:
\begin{enumerate}
\item[(i)] X is injective.
\item[(ii)] X is hyperconvex.
\item[(iii)] X is a 1- absolute Lipschitz retract.
\end{enumerate}
\end{theorem}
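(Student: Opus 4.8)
The plan is to establish the cycle of implications (i) $\Rightarrow$ (iii) $\Rightarrow$ (ii) $\Rightarrow$ (i). The implication (i) $\Rightarrow$ (iii) is immediate from the definitions: if $X$ is injective and $X$ is isometrically contained in a metric space $Z$, then the identity map $\mathrm{id}_X : X \to X$ is $1$-Lipschitz and hence extends to a $1$-Lipschitz map $r : Z \to X$ with $r|_X = \mathrm{id}_X$; such an $r$ is precisely a $1$-Lipschitz retraction of $Z$ onto $X$, so $X$ is a $1$-absolute Lipschitz retract.

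For (iii) $\Rightarrow$ (ii) the idea is to import hyperconvexity from a canonical hyperconvex superspace. First I would record that $\ell_\infty(\Gamma)$, the space of bounded real-valued functions on a set $\Gamma$ equipped with the sup-metric, is hyperconvex: if $\{x_i\}_{i \in \Lambda}$ and $\{r_i\}_{i \in \Lambda}$ satisfy $d(x_i, x_j) \le r_i + r_j$ for all $i, j$, then for each coordinate $\gamma$ we get $|x_i(\gamma) - x_j(\gamma)| \le r_i + r_j$, so $\sup_i (x_i(\gamma) - r_i) \le \inf_j (x_j(\gamma) + r_j)$; picking $z(\gamma)$ between these two quantities yields, for any fixed index $i_0$, the bound $|z(\gamma)| \le \|x_{i_0}\|_\infty + r_{i_0}$, so $z \in \ell_\infty(\Gamma)$ and $\|z - x_i\|_\infty \le r_i$ for every $i$. (This is the metric counterpart of the polyhedral-ball remark in Remark \ref{r5}(a).) Next I would embed $X$ isometrically into $\ell_\infty(X)$ via the Kuratowski map $x \mapsto d(x, \cdot) - d(x_0, \cdot)$ for a fixed base point $x_0 \in X$. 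Since $X$ is a $1$-absolute Lipschitz retract there is a $1$-Lipschitz retraction $r : \ell_\infty(X) \to X$. Given balls $B(x_i, r_i)$ in $X$ with $d(x_i, x_j) \le r_i + r_j$, hyperconvexity of $\ell_\infty(X)$ produces $z$ with $d(z, x_i) \le r_i$ for all $i$, and then $d(r(z), x_i) = d(r(z), r(x_i)) \le d(z, x_i) \le r_i$, so $r(z) \in \bigcap_i B(x_i, r_i)$. Thus $X$ is hyperconvex.

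For (ii) $\Rightarrow$ (i), let $M \subseteq N$ be metric spaces and $f : M \to X$ a $1$-Lipschitz map; I would extend $f$ via Zorn's lemma applied to the poset of $1$-Lipschitz maps $g : M' \to X$ with $M \subseteq M' \subseteq N$ and $g|_M = f$, ordered by extension. The engine is the one-point extension step: given such a $g$ and a point $z \in N \setminus M'$, the balls $\{ B(g(x), d(z, x)) \}_{x \in M'}$ in $X$ satisfy $d(g(x), g(x')) \le d(x, x') \le d(x, z) + d(z, x')$, so hyperconvexity (Definition \ref{fd6'}) yields a point $y \in \bigcap_{x \in M'} B(g(x), d(z, x))$; setting $g(z) = y$ produces a $1$-Lipschitz extension to $M' \cup \{z\}$. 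Hence a maximal element of the poset is defined on all of $N$, which gives the desired extension and shows $X$ is injective, closing the cycle.

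The step I expect to need the most care is the hyperconvexity of $\ell_\infty(\Gamma)$ for an \emph{arbitrary} family of balls — in particular the check that the coordinatewise selector remains bounded, so that it genuinely lies in $\ell_\infty(\Gamma)$ — together with the routine but slightly delicate bookkeeping in the Zorn's lemma argument (verifying that maximality forces the extension to be total). Everything else is a direct unwinding of Definition \ref{fd6'} and of the metric notions of injectivity and absolute Lipschitz retract (cf.\ Remark \ref{er9}(e)).
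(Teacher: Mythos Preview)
Your argument is correct and is precisely the standard Aronszajn--Panitchpakdi cycle. Note that the paper does not actually supply its own proof of this theorem: it simply refers the reader to the Espinola--Khamsi survey, so there is no in-house proof to compare against. That said, your one-point extension step (ii) $\Rightarrow$ (i) is exactly the mechanism the paper \emph{does} write out in the paragraph following Definition~\ref{fd5}, where it shows that property (K) yields the Lipschitz extension property by intersecting the balls $\{B(f(x),d(z,x))\}_{x\in A}$ and then Zornifying; hyperconvexity plays the identical role here. Your (iii) $\Rightarrow$ (ii) via the Kuratowski embedding into $\ell_\infty(X)$ and retracting back is likewise the standard route, and your check that the coordinatewise selector stays bounded (by anchoring to a fixed index $i_0$) is the right way to handle the point you flagged. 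Nothing further is needed.
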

A detailed proof of the above statement can be looked up in “R. Espinola and M. A. Khamsi, Introduction to hyperconvex spaces, 
Handbook of Metric Fixed Point Theory, Eds., W. A. Kirk and B. Sims, Kluwer Acd. Publishers, 2001”. I wish to thank Asuman Aksoy for drawing 
my attention to this reference, though in a different context.

We conclude by providing the details of proof of (c) above, following the ideas of Benjamini and Lindenstrauss \cite[Chapter 1]{bl}. 
The motivation for including a proof of (c), though well known in the literature, is to draw attention to the typically Hilbert space
 nature of the proof which seems to suggest the failure of property (K) in the Banach space setting, except perhaps in certain isolated trivial cases.

We begin the proof by noting that a Hilbert space being reflexive, its closed balls are weakly compact and so, in the presence of finite intersection property,
 have a non-empty intersection. Thus it suffices to prove the assertion for finite collections of balls in the same Hilbert space H which may even 
be assumed to be finite dimensional as only finite intersections are involved.

Thus, let $x\in \bigcap_{i\in \wedge} B(x_i,r_i)$ and assume, without loss of generality, that $x\neq x_i,~ for~ all~ i\geq1$. Consider the function $f:H\rightarrow R$ given by
$$ f(z)=\max_{i\leq n}\frac{||z-y_i||}{||x-x_i||}.$$
Being continuous on the finite dimensional space H with $f(z)\rightarrow\infty~ as~ z\rightarrow\infty$, it 
follows that $f$ achieves its (absolute) minimum , 
say $\lambda$, at some point $y$ in H.

{Claim:} $y\in \bigcap_{i\in \wedge} B(y_i,r_i)$\\
It clearly suffices to show that $\lambda\leq 1$. Denote by J the set
$$J=\{1\leq i\leq n;\| y-y_i\| =\lambda \| x-x_i\| \}.$$
Let us note that $y\in A=\text{conv}\{y_i\,;i\in J\}$. Indeed, otherwise, there exists a hyperplane L separating $y$ from A.  Thus it is possible to choose $y_0\neq y$ in H such that $\| y_0-y_i \| < \| y-y_i \| =\lambda \| x-x_i\|~ for ~i\in J$. Also, because $\| y_0-y_i\|\leq\| y-y_i\| <\lambda\| x-x_i\|~ for~ i\in J$, it follows that $f(y_0)<f(y),$ contradicting that $f(y)$ is minimal. Thus we can choose scalars $\{\alpha_i\}_{i\in J}$ such that $y=\sum \alpha_i  y_i,\alpha_i\geq 0,\sum \alpha_i =1$. Now for $i,j\in J$, we have
\begin{align*}
\| x_i-x_j\|^2 &\geq \| y_i-y_j\|^2\\ &= \| y-y_i\|^2 + \| y-y_j\|^2 -2\langle y-y_i,y-y_j\rangle\\ &=\lambda^2\| x-x_i\|^2 +\lambda^2\| x-x_j\|^2 -2\langle y-y_i,y-y_j\rangle.
\end{align*}
Multiplying the above inequality on both sides by $\alpha_i \alpha_j$  , summing over $i,j\in J$ and making use of 
the fact that
$$ \sum_{i,j\in J}\alpha_i  \alpha_j \langle y-y_i,y-y_j\rangle=0 $$
and
$$ 2\| \sum_{i\in J} \alpha_i x_i\|^2=\sum_{i,j\in J}\alpha_i \alpha_j (\| x_i \|^2 + 
\| x_j\|^2-\| x_i-x_j\|^2)$$
we see that

\begin{eqnarray*}
0&\geq& \sum_{i,j\in J} \alpha_i\alpha_j (\lambda^2\| x- x_i \|^2 + \lambda^2\| x- 
x_j\|^2-\| x_i-x_j\|^2)\\
 &=& \sum_{i,j\in J} \alpha_i \alpha_j (\| x- x_i \|^2 + \| x- 
x_j\|^2-\| x_i-x_j\|^2)\\
 &+&(\lambda^2 -1)\sum_{i,j\in J} \alpha_i \alpha_j(\| x- x_i \|^2 
+ \| x- x_j\|^2.
\end{eqnarray*}
Finally, noting that the first term in the above sum is non-negative by virtue of $(\ast)$, it follows that  $\lambda^2-1\leq 0, i.e., \lambda\leq1$ and this completes the proof.\\

%---------------------------------------------------------------------------------------%

{\bf Acknowledgement.} Work on this paper was initiated during the author’s visit at ISI Delhi and at IISc Bangalore where he spent part of the sabbatical leave from his university from Aug. 2012 to March 2013. The author wishes to thank his hosts Prof. Ajit Iqbal Singh (ISI Delhi) and Prof. Gadadhar Misra(IISc Bangalore) for their kind invitation and hospitality during the author’s visit to these institutions. He would also like to thank his home university for allowing him to avail of sabbatical leave during the indicated period.

\bibliographystyle{amsplain}

\end{document}